\definecolor{dkgreen}{rgb}{0,0.6,0}
\definecolor{gray}{rgb}{0.5,0.5,0.5}
\definecolor{mauve}{rgb}{0.58,0,0.82}
\tiny\color{gray},
\newcommand{\xar}[1]{\xrightarrow{{#1}}}
\DeclareMathOperator{\Hom}{Hom}
\DeclareMathOperator{\Ext}{Ext}
\DeclareMathOperator{\spec}{Spec}
\DeclareMathOperator{\spf}{Spf}
\DeclareMathOperator{\Gal}{Gal}
\DeclareMathOperator{\coker}{coker}
\DeclareMathOperator{\QCoh}{\mathrm{QCoh}}
\DeclareMathOperator{\pic}{Pic}
\DeclareMathOperator{\colim}{colim}
\DeclareMathOperator{\Tot}{Tot}
\newtheorem{lemma}{Lemma}[section]
\newtheorem{corollary}[lemma]{Corollary}
\newtheorem{theorem}[lemma]{Theorem}
\newtheorem{prop}[lemma]{Proposition}
\theoremstyle{definition}
\newtheorem{conjecture}[lemma]{Conjecture}
\newtheorem{definition}[lemma]{Definition}
\newtheorem{example}[lemma]{Example}
\newtheorem{remark}[lemma]{Remark}
\newcommand{\FF}{\mathbf{F}}
\newcommand{\Z}{\mathbf{Z}}
\newcommand{\QQ}{\mathbf{Q}}
\newcommand{\cc}{\mathcal{C}}
\newcommand{\Lp}{L_{K(p-1)}}
\newcommand{\Lk}{L_{K(n)}}
\newcommand{\Sp}{\mathrm{Sp}}
\newcommand{\Tmf}{\mathrm{Tmf}}
\newcommand{\Aff}[1]{{\mathrm{Aff}^{\text{\'{e}t}}_{/#1}}}
\renewcommand{\H}{\mathrm{H}}
\newcommand{\M}{\mathcal{M}}
\newcommand{\Mfg}{\M_{FG}}
\newcommand{\ff}{\mathcal{F}}
\newcommand{\co}{\mathcal{O}}
\renewcommand{\SS}{\mathbf{S}}
\newcommand{\GG}{\mathbf{G}}
\newcommand{\Mod}{\mathrm{Mod}}
\newcommand{\cL}{\mathcal{L}}
\newcommand{\CP}{\mathbf{C}P}
\newcommand{\Map}{\mathrm{Map}}
\newcommand{\dZ}{\mathfrak{Z}}
\newcommand{\dX}{\mathfrak{X}}
\newcommand{\frakm}{\mathfrak{m}}
\newcommand{\Lone}{L_{K(1)}}
\newcommand{\Ltwo}{L_{K(2)}}
\newcommand{\cg}{\mathcal{G}}
\newcommand{\der}{\mathrm{der}}
\newcommand{\Eoo}{{\mathbf{E}_\infty}}
\newcommand{\EOo}{{\mathbf{E}_\infty}}
\newcommand{\dY}{\mathfrak{Y}}
\newcommand{\cf}{\mathcal{F}}
\newcommand{\DD}{\mathbf{D}}
\newcommand{\ev}{\mathrm{ev}}
\newcommand{\bgl}{\mathrm{bgl}}
\newcommand{\GL}{\mathrm{GL}}
\newcommand{\wt}[1]{\widetilde{#1}}
\newcommand{\fr}[1]{\mathfrak{#1}}
\title{The Lubin-Tate stack and Gross-Hopkins duality}
\author{Sanath Devalapurkar}
\begin{document}

\begin{abstract}
    Morava $E$-theory $E$ is an $\Eoo$-ring with an action of the Morava
    stabilizer group $\Gamma$. We study the derived stack $\spf E/\Gamma$.
    Descent-theoretic techniques allow us to deduce a theorem of
    Hopkins-Mahowald-Sadofsky on the $K(n)$-local Picard group. These
    techniques also allow us to rederive a few consequences of a recent result
    of Barthel-Beaudry-Stojanoska on the Anderson duals of higher real
    $K$-theories.
\end{abstract}
\maketitle
%\tableofcontents

\section{Introduction}
Goerss-Hopkins-Miller proved that Morava $E$-theory $E$ (at a fixed height $n$
and prime $p$) is an $\Eoo$-ring. Moreover, the profinite group $\Gamma$ (also
known as the Morava stabilizer group) of units in a certain division algebra of
Hasse invariant $1/n$ acts continuously on $E$ via $\Eoo$-ring maps. From the
perspective of derived algebraic geometry, this is saying that one can
construct the object $\spf E/\Gamma$ (the ``Lubin-Tate stack'').

Devinatz and Hopkins proved that there is an equivalence
$$\Lk S \simeq E^{h\Gamma},$$
where the right hand side uses an appropriate notion of continuous fixed
points. This result allows us to show that there is an equivalence of
$\infty$-categories
$$\QCoh(\spf E/\Gamma) \simeq \Lk \Sp.$$

In \cite{hmsref}, Hopkins-Mahowald-Sadofsky proved that the following
statements are equivalent for a $K(n)$-local spectrum $M$.
\begin{enumerate}
    \item $M$ is $K(n)$-locally invertible.
    \item $\dim_{K(n)_\ast}K(n)_\ast M = 1$.
    \item $E^\vee_\ast M$ is a free $E_\ast$-module of rank $1$.
\end{enumerate}
The above discussion suggests that one may recast this result as a
descent-theoretic statement along the \'etale cover
$$\spf E\to \spf E/\Gamma.$$
This is one of the results proved in this paper.

One useful computational tool in the study of the $K(n)$-local Picard group is
the existence of a map
$$\pic_n\to \H^1_c(\Gamma; E_0^\times).$$
This descent-theoretic viewpoint allows us to think of this assignment as the
monodromy action of the line bundle over $\spf E/\Gamma$ corresponding to a
$K(n)$-locally invertible spectrum.

As an approximation to $\pic_n$, one can attempt to understand the Picard group
of the higher real $K$-theories. In the simplest case, one has an
identification
$$\pic(KO) \simeq \Z/8,$$
generated by $\Sigma KO$. This corresponds to the $8$-fold periodicity of $KO$.
Recently, Heard-Mathew-Stojanoska computed in \cite{pichms} that if
$$EO_{p-1} = E^{hC_p}_{p-1},$$
then
$$\pic(EO_{p-1}) \simeq \Z/(2p^2),$$
again generated by $\Sigma EO_{p-1}$. This corresponds to the $2p^2$-fold
periodicity of $EO_{p-1}$. One expects the Picard to be cyclic at any height.
When $p-1$ does not divide $n$ this is a simple computation. In \cite{hhr},
Hill-Hopkins-Ravenel describe the $E_2$-page for the homotopy fixed point
spectral sequence for $EO_{2(p-1)}$. This suggests using tools similar to those
in \cite{pichms} to prove that the Picard group of $EO_{2(p-1)}$ is cyclic. We
will return to this computational problem in a future paper.

Barthel-Beaudry-Stojanoska used this result in \cite{barthelbeaudrystojanoska}
to prove a self-duality statement. Since $\QQ/\Z$ is an injective abelian
group, the functor
$$X\mapsto \Hom(\pi_{-\ast} X, \QQ/\Z)$$
defines a cohomology theory. This is represented by a spectrum $I_{\QQ/\Z}$,
called the Brown-Comenetz dualizing spectrum. The Brown-Comenetz dual of a
spectrum $X$ is defined as
$$I_{\QQ/\Z} X = \underline{\Map}(X, I_{\QQ/\Z}).$$
There is a canonical map $H\QQ\to I_{\QQ/\Z}$, and the fiber of this map is the
Anderson dualizing spectrum, $I_\Z$. One similarly defines the Anderson dual of
a spectrum $X$ to be
$$I_\Z X = \underline{\Map}(X, I_\Z).$$
In \cite{heard-stojanoska}, Heard-Stojanoska showed that there is an
equivalence
$$I_\Z KO \simeq \Sigma^4 KO.$$
Using computational tools, Barthel-Beaudry-Stojanoska proved that, at odd
primes, there is an equivalence
$$\Lk I_{\QQ/\Z} EO_{p-1} \simeq \Sigma^{(p-1)^2} EO_{p-1}.$$
This implies that
$$\Lp I_\Z EO_{p-1} \simeq \Sigma^{(p-1)^2-1} EO_{p-1}.$$
This computational approach does not shed much light (at least to the author)
on the theoretical underpinnings of Anderson self-duality. In this paper, we
provide a conceptual explanation for this fact.

From an algebro-geometric point of view, $I_\Z$ can be thought of as a
dualizing sheaf for $\spec S$. In the first section, we recall some facts about
derived stacks. We then develop methods to analyze dualizing sheaves for even
periodic derived Deligne-Mumford stacks. We prove the following tool for
recognizing when a spectrum is a dualizing sheaf for $\spec S$, which is
tangential to the discussion about Picard groups of $EO_{p-1}$.  Let $R$ be a
coconnected $p$-complete spectrum such that $\pi_\ast R$ is a finite abelian
group for $\ast\neq 1$ and $\pi_0 R$ is a finitely generated abelian group.
Then the following statements are equivalent:
\begin{enumerate}
    \item $\mathrm{Map}(H\Z/p,R) \simeq \Sigma^{-1} H\Z/p$, and
    \item $R$ is a dualizing sheaf for $\spec S$.
\end{enumerate}
In a later paper, we will give an application of this result to a higher Snaith
theorem (see \cite{craig-imj}).

Let us return to Anderson self-duality. Let $G\subseteq \Gamma$ be a finite
subgroup of the Morava stabilizer group. Consider the structure map $f:\spf
E/G\to \spec S$; then $f^! I_\Z$ is exactly $\Lk I_\Z E^{hG}$. Using general
statements about self-duality in the derived setting (see Theorem
\ref{andinvert} and Proposition \ref{lurie-prop}), we deduce that $I_\Z E^{hG}$
is an element of $\pic(E^{hG})$ for any height and prime.

If $G\subseteq \Gamma$ is not a finite group, then our argument does not
necessarily work. However, when $G = \Gamma$, the quasicoherent sheaf $f^!
I_\Z$ on $\spf E/\Gamma$ is in fact $K(n)$-locally invertible, although our
methods do not suffice to give a proof. As there is an equivalence
$$\Sigma^{-1} \widehat{I} \simeq f^! I_\Z,$$
where $\widehat{I}$ is the Gross-Hopkins element of $\pic_n$, this statement is
equivalent to Gross-Hopkins duality (the classical proof is in
\cite{strickland}).

Using the invertibility of this element, we deduce that --- conditional on
$E^{hG}$ being Spanier-Whitehead self-dual, which is proved in Appendix
\ref{appendix} at any height divisible by $(p-1)$ for the subgroup $G = C_p$
--- if the group of exotic elements of $\pic(E^{hG})$, i.e., elements $X$ such
that $E^\vee_\ast E^{hG} \simeq E^\vee_\ast X$ as Morava modules, is cyclic or
trivial, then $\Lk I_\Z E^{hG}$ is equivalent to a shift of $E^{hG}$. Thus, the
result about the cyclicity of the Picard group of $EO_{p-1}$ implies that $\Lp
I_\Z EO_{p-1}$ is equivalent to a shift of $EO_{p-1}$. However, our method does
not give the exact shift of $(p-1)^2-1$.

Gross and Hopkins also describe the monodromy action on the line bundle $f^!
I_\Z$, and show that it is essentially the determinant representation of
$\Gamma$. The question of how one might recover this result using the methods
of this paper is the subject of future work.

\subsection{Acknowledgements}
I'm glad to be able to thank Marc Hoyois for many helpful discussions on the
subject of this paper, and for introducing me to stacks and the
algebro-geometric viewpoint on norm maps in representation theory. I'm also
grateful to Agn\`es Beaudry, Hood Chatham, Jeremy Hahn, Drew Heard, Adeel Khan,
Pax Kivimae, Tyler Lawson, Jacob Lurie, Haynes Miller, Eric Peterson, Paul
VanKoughnett, Craig Westerland, Zhouli Xu, and Allen Yuan for helpful
conversations.

\section{Derived stacks}
Most of the discussion in this section can be found in more detail in
\cite{SAG}.
\subsection{Generalities}
All Deligne-Mumford stacks are assumed to have affine diagonal.
\begin{definition}
    A \emph{derived (Deligne-Mumford) stack} $\dX$ is a Deligne-Mumford stack
    $X$ along with a sheaf of $\Eoo$-rings $\co^{\der}_X$ (interchangeably
    denoted $\co_\dX$) on the affine \'etale site of $X$ such that $\pi_0
    \co^{\der}_X \simeq \co_X$ and $\pi_i \co^{\der}_X$ is a quasicoherent
    $\pi_0 \co^{\der}_X$-module.
\end{definition}
We say that a Deligne-Mumford stack $X$ ``admits a lift'' if there is a derived
stack with underlying stack $X$. This is a rather strong condition to impose on
a Deligne-Mumford stack; see, for instance, \cite{adjoining-roots} and
\cite{roots-of-unity} for results on non-liftability.

Let $\cf$ be a sheaf of $\Eoo$-rings on a Deligne-Mumford stack $Y$, and let
$f:X\to Y$ denote a morphism of Deligne-Mumford stacks. Define a sheaf of
$\Eoo$-rings $f^{-1} \cf$ on $X$ as follows: for every \'etale map $\spec R\to
X$, we define $(f^{-1} \cf)(\spec R)$ to be the homotopy colimit $\colim_{\spec
R\to Z\to Y, Z\to Y} \cf(Z)$ over all such \'etale morphisms.
\begin{definition}
    Let $\dX$ and $\dY$ denote derived stacks. A morphism $\dX\to \dY$ is a
    morphism $f:X\to Y$ along with a morphism $f^{-1} \co_\dX\to \co_\dY$ of
    sheaves of $\EOo$-rings which induces the map $f^{-1} \co_X\to \co_Y$.
\end{definition}
If $f:\dX\to \dY$ is a morphism of derived stacks, then $f^\ast \cf =
\co_\dY\otimes_{f^{-1} \co_\dX} f^{-1} \cf$.

One can define the $\infty$-category of quasicoherent sheaves on derived stacks
just as in the classical case:
$$\QCoh(\dX) = \lim_{\spec R\to \dX} \Mod(\co^{\der}(\spec R)),$$
where the homotopy limit is taken over all \'etale morphisms $\spec R\to \dX$.
This is a limit of presentable stable $\infty$-categories under
colimit-preserving functors, so $\QCoh(\dX)$ is also a presentable stable
$\infty$-category.

Using descent theory, we can give an equivalent presentation. Suppose $\spec
R\to X$ is an \'etale surjection. Then
$$ \QCoh(\dX) \simeq \mathrm{Tot}\left(\xymatrix{ \QCoh(\spec R)
\ar@<0.5ex>[r]\ar@<-0.5ex>[r] & \QCoh(\spec R\times_\dX \spec
R)\ar@<-0.75ex>[r]\ar[r]\ar@<0.75ex>[r] & \cdots}\right).$$
This comes from the presentation of $\dX$ as a semisimplicial object
$$\xymatrix{ \spec R & \ar@<0.5ex>[l]\ar@<-0.5ex>[l] \QCoh(\spec R\times_\dX
\spec R) & \ar@<-0.75ex>[l]\ar[l]\ar@<0.75ex>[l] \cdots}.$$

One way to obtain derived stacks is via the following theorem.
\begin{theorem}\label{lurie-etale}
    Let $R$ be an $\Eoo$-ring. Suppose $f:\pi_0 R\to A$ is an \'etale map of
    ordinary rings; then there is an $R$-algebra $B$ with an \'etale map $R\to
    B$ such that $\pi_0 B \cong A$, and the induced map on homotopy agrees with
    the original map $f$.
\end{theorem}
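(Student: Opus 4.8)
The plan is to construct $B$ by obstruction theory along the Postnikov tower of $R$, exploiting that the cotangent complex of an \'etale morphism vanishes. First I would reduce to the case where $R$ is connective: given an arbitrary $\Eoo$-ring $R$ and an \'etale map $f : \pi_0 R \to A$, apply the connective case to $\tau_{\geq 0} R$ (which has the same $\pi_0$, and hence receives the same $f$) to obtain an \'etale $\tau_{\geq 0} R$-algebra $B'$ with $\pi_0 B' \cong A$, and then put $B = R \otimes_{\tau_{\geq 0} R} B'$. \'Etale maps are stable under base change, so $R \to B$ is \'etale; and since $B'$ is flat over $\tau_{\geq 0} R$, base change gives $\pi_\ast B \cong \pi_\ast R \otimes_{\pi_0 R} A$, so in particular $\pi_0 B \cong A$.

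So assume $R$ is connective, hence $R \simeq \lim_n \tau_{\leq n} R$. Each truncation map $\tau_{\leq n+1} R \to \tau_{\leq n} R$ is a square-zero extension by $\Sigma^{n+1} \pi_{n+1} R$, viewed as a $\tau_{\leq n} R$-module via $\tau_{\leq n} R \to \pi_0 R$. The heart of the argument is the deformation-theoretic statement that, for a square-zero extension $\widetilde{A} \to A$ and an \'etale $A$-algebra $C$, there is a unique (up to a contractible space of choices) $\widetilde{A}$-algebra $\widetilde{C}$ with $\widetilde{C} \otimes_{\widetilde{A}} A \simeq C$, and it is \'etale over $\widetilde{A}$. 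Indeed, both the obstruction to the existence of such a lift and the automorphisms of a lift, once it exists, are governed by mapping spectra out of the relative cotangent complex $L_{C/A}$; since $A \to C$ is \'etale, $L_{C/A} \simeq 0$, so the obstruction vanishes and the space of lifts is contractible. Flatness of $\widetilde{A} \to \widetilde{C}$ and the identification $\pi_0 \widetilde{C} \cong \pi_0 C$ then follow formally.

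Applying this inductively, starting from $B_0 = A$ over $\tau_{\leq 0} R = \pi_0 R$, yields a compatible tower $(B_n)_{n \geq 0}$ with $B_n$ \'etale over $\tau_{\leq n} R$ and $\pi_0 B_n \cong A$; I would then set $B = \lim_n B_n \in \CAlg_R$. The fiber of $B_{n+1} \to B_n$ is obtained by base change along $A \to B_n$ from the $(n+1)$-connective fiber of $\tau_{\leq n+1} R \to \tau_{\leq n} R$, hence is itself $(n+1)$-connective, so the tower converges: $B$ is connective and $\pi_i B \cong \pi_i B_n$ for $n > i$, i.e. $\pi_i B \cong \pi_i R \otimes_{\pi_0 R} A$. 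In particular $\pi_0 B \cong A$ is \'etale over $\pi_0 R$ and $\pi_\ast B$ is flat over $\pi_0 R$, so $R \to B$ is \'etale, with the induced map on $\pi_0$ equal to $f$ by construction.

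The step I expect to be the main obstacle is the deformation-theoretic claim of the second paragraph: identifying the Postnikov tower as a tower of square-zero extensions, and classifying square-zero extensions and lifts along them in terms of the absolute and relative cotangent complexes. This is precisely where the cotangent-complex formalism of \cite{SAG} enters in an essential way; the remaining steps are bookkeeping with connectivity and homotopy limits. The same argument in fact proves $B$ unique up to equivalence, although only existence is asserted here.
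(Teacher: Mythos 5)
Your argument is correct and is essentially the proof of the result the paper points to: the paper itself gives no argument for Theorem \ref{lurie-etale}, deferring to Goerss--Hopkins and to \cite[Theorem 7.5.0.6]{HA}, whose proof proceeds exactly as you describe --- reduce to the connective case by base change along $\tau_{\geq 0}R \to R$ (using flatness to compute $\pi_\ast B \cong \pi_\ast R \otimes_{\pi_0 R} A$), then lift the \'etale algebra up the Postnikov tower of square-zero extensions using that such lifts are controlled by the relative cotangent complex and that $L_{C/A} \simeq 0$ for \'etale $A \to C$. The deformation-theoretic step you flag as the main obstacle is indeed the substantive input, and it is precisely what the cited reference supplies.
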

\begin{proof}
    This theorem can be deduced from work of Goerss and Hopkins in
    \cite{goerss-hopkins}, and can also be found as \cite[Theorem 7.5.0.6]{HA}.
\end{proof}
%\begin{remark}
%    Theorem \ref{lurie-etale} allows us to lift any ordinary (Deligne-Mumford)
%    stack $Y$ which is \'etale over a derived stack $X$.
%\end{remark}
In what follows, we will be interested in derived formal schemes. To this end,
we make the following definition.
\begin{definition}
    An adic $\Eoo$-ring is an $\Eoo$-ring $R$ with a topology on $\pi_0 R$,
    such that $\pi_0 R$ admits a finitely generated ideal $I$ of definition.
\end{definition}
Let $M$ be a $R$-module. Pick a set of generators $x_1,\cdots,x_n$ for $I$. Say
that $M$ is $(x_i)$-complete if
$$\lim(\cdots\xar{x_i} M\xar{x_i} M\xar{x_i} M) \simeq 0,$$
where $x_i:M\to M$ is the morphism determined by $x_i\in \pi_0 R$. The
$R$-module $M$ is said to be $I$-complete if $M$ is $(x_i)$-complete for $1\leq
i\leq n$.

Let $R, S$, and $T$ be adic $\Eoo$-rings, such that $R$ and $T$ have finitely
generated ideals of definition $I\subseteq \pi_0 R$ and $J\subseteq \pi_0 T$.
Then we can endow $\pi_0(R\wedge_S T)$ with the $K$-adic topology, where $K$ is
the ideal generated by the images of $I$ and $J$. The resulting adic
$\Eoo$-ring is denoted $R\widehat{\otimes}_S T$.

An adic $\Eoo$-ring determined a derived formal scheme $\spf R$, whose
underlying (formal) scheme is $\spf \pi_0 R$. The sheaf $\co^{\der}$ of
$\Eoo$-rings on the affine \'etale site of $\spf \pi_0 R$ is defined as
follows. Let $\spec A\to \spf \pi_0 R$ be an affine \'etale over $\spf \pi_0
R$, given by a map $\pi_0 R\to A$; lift $A$ to an \'etale $R$-algebra $B$ by
Theorem \ref{lurie-etale}. As a functor from $\Aff{X}$ to $\Eoo$-rings, we
define
$$\co^{\der}(\spec A) = B^\wedge_I.$$

More generally, the procedure described above allows us to construct a
quasicoherent sheaf $\cf$ on $\spf R$ from any any $I$-complete $R$-module $M$:
we send
$$\cf(\spec A) = (B\otimes_R M)^\wedge_I.$$
This begets an equivalence
$$\QCoh(\spf R) \simeq \Mod(R)^\wedge_I,$$
where the right hand side denotes the $\infty$-category of $I$-complete
$R$-modules. The smash product of adic $\Eoo$-rings defined above allows us to
consider the fiber product of derived (affine) formal schemes.

For the rest of this paper, any $\Eoo$-ring $R$ will be assumed to be an adic
$\Eoo$-ring with a fixed finitely generated ideal of definition $I$. There
should not be any confusion as to what this ideal is; this will be clear from
the context. Note that every $\Eoo$-ring $R$ can trivially be viewed as an adic
$\Eoo$-ring: endow $\pi_0 R$ with the discrete topology (equivalently, suppose
that $I$ is nilpotent).

Suppose $G$ is a finite group acting on an $\Eoo$-ring $R$ by $\Eoo$-maps. We
can then define the quotient $\spf R/G$ as the colimit of the resulting functor
from $BG$ into the $\infty$-category of formal derived Deligne-Mumford stacks.
Using the cosimplicial model for $BG$ (equivalently, \'etale descent), this can
equivalently be presented via the semisimplicial diagram
$$ \xymatrix{\spf R & \ar@<0.5ex>[l]\ar@<-0.5ex>[l] \spf(R\times G) &
\ar@<-0.75ex>[l]\ar[l]\ar@<0.75ex>[l] \cdots}.$$
We will also need to consider special cases when $G$ is not finite. If
$\dX_\bullet$ is a semisimplicial object in derived stacks, we will denote by
$\QCoh(\dX_\bullet)$ the totalization $\Tot(\QCoh(\dX_\bullet))$ of the
semicosimplicial diagram $\QCoh(\dX_\bullet)$. If $\dX$ is a derived stack, then
$\QCoh(\dX_\bullet^\mathrm{constant}) \simeq \QCoh(\dX)$, where
$\dX_\bullet^\mathrm{constant}$ is the constant semisimplicial object. We
will often abuse notation by using $\dX$ to denote
$\dX_\bullet^\mathrm{constant}$.

\subsection{Vector bundles}
Let $R$ be an $\Eoo$-ring. A projective $R$-module $M$ is a retract of a free
$R$-module. A simple consequence of this definition is that projective
$R$-modules are flat, since direct sums and retracts of flat modules are flat.
In other words, the natural map
$$\pi_0 M \otimes_{\pi_0 R} \pi_\ast R\to \pi_\ast M$$
is an isomorphism.

\begin{definition}\label{vb}
    Let $X$ be a derived stack. A \emph{vector bundle of rank $n$} on $X$ is a
    quasicoherent sheaf $\ff$ such that for every \'etale map $f:\spec R\to X$,
    the pullback $M := f^\ast \ff\in \Mod_R$ satisfies the following
    properties:
    \begin{itemize}
	\item $M$ is a projective $R$-module such that $\pi_0 M$ is a finitely
	    generated $\pi_0 R$-module.
	\item $\pi_0(k\otimes_R M)$ is a $k$-vector space of dimension $n$
	    where $k$ is a field with a map of $\Eoo$-rings $R\to k$.
    \end{itemize}
\end{definition}
A line bundle is a vector bundle of rank $1$. Let $\pic(X)$ be the space of
suspensions of line bundles on $X$, topologized as a subspace of the maximal
subgroupoid inside $\QCoh(X)$. As a corollary of the discussion in \cite[\S
2.9.4-5]{SAG}, we find that if $X$ is a connected derived stack, then $\pic(X)$
is equivalent to the space of invertible objects of the $\infty$-category
$\QCoh(X)$.

Before proceeding, let us discuss how $\pic(\QCoh(\spf R))$ relates to
$\pic(\QCoh(\spec R))$. Suppose $\pi_0 R$ is $I$-complete. It is then clear
that any invertible $R$-module is in $\pic(\QCoh(\spf R))$. Moreover, an
element of $\pic(\QCoh(\spf R))$ is in $\pic(\QCoh(\spec R))$ if and only if
$M$ is a perfect $R$-module.

Let $R$ be an even periodic adic $\Eoo$-ring with ideal of definition $I$ such
that:
\begin{itemize}
    \item $\pi_0 R$ is a complete regular local Noetherian ring which is
	$I$-complete.
    \item An $R$-module is dualizable in $\Mod(R)$ if and only if it is
	perfect.
\end{itemize}
\begin{prop}\label{picinv}
    If $R$ satisfies the above two conditions, then $\pic(\spf R)$ is
    equivalent to the space of invertible objects of $\QCoh(\spf R) \simeq
    \Mod(R)^\wedge_I$.
\end{prop}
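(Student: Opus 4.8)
The plan is to show that the two full subgroupoids of $\QCoh(\spf R)\simeq\Mod(R)^{\wedge}_I$ at issue --- the suspensions of line bundles, and the invertible objects --- consist of the same objects; since a full subgroupoid is determined by its objects, this yields the asserted equivalence of spaces. One containment is the formal analogue of the fact recalled above for connected derived stacks, and is straightforward: a rank-$n$ vector bundle $\ff$ on $\spf R$ is dualizable in $\QCoh(\spf R)$, since dualizability may be tested on an affine \'etale cover, the pullback of $\ff$ to each chart is by hypothesis finitely generated projective, hence a retract of a finite free module, hence dualizable, and the duality data glue; and if $n=1$ then $\ff$ is invertible, because $\ff\otimes_{\co}\ff^{\vee}\simeq\co$ can be checked after refining the cover to charts $\spec B$ with $\pi_0 B$ local, over which $\ff$ restricts to a projective module whose $\pi_0$ is free of rank one --- hence, projective modules being flat, to a free module of rank one. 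Since shifts of invertible objects are invertible, $\pic(\spf R)$ is contained in the space of invertible objects of $\QCoh(\spf R)$.

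For the reverse containment, let $M$ be an invertible object of $\Mod(R)^{\wedge}_I$; we must check that its pullback to each \'etale chart of $\spf R$ is, up to a shift, a finitely generated projective module of rank one in the sense of Definition~\ref{vb}. This is a condition local on $\spf R$, it is insensitive to localizing the charts at primes and to $I$-completing them, and the hypotheses on $R$ are inherited under \'etale base change, localization and completion; so it suffices to prove the local statement that \emph{if $B$ is an even periodic adic $\Eoo$-ring whose ideal of definition $I$ is such that $\pi_0 B$ is a complete regular local Noetherian $I$-complete ring and dualizable $B$-modules (in $\Mod(B)$) coincide with perfect ones, then every invertible object of $\Mod(B)^{\wedge}_I$ is equivalent to $\Sigma^k B$ for some integer $k$.} The key point is the lemma that, under these hypotheses, the dualizable objects of the symmetric monoidal $\infty$-category $\Mod(B)^{\wedge}_I$ are precisely the perfect $B$-modules.

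Granting the lemma, the local statement drops out quickly. If $M$ is invertible in $\Mod(B)^{\wedge}_I$ with inverse $N$, then $M$ and $N$ are dualizable there, hence perfect $B$-modules; consequently $M\otimes_B N$ is a perfect --- therefore $I$-complete --- $B$-module, so the completion map $M\otimes_B N\to (M\otimes_B N)^{\wedge}_I\simeq B$ is an equivalence and $M$ is invertible already in $\Mod(B)$. Since $\pi_0 B$ is local, a graded-Nakayama argument over $\pi_{\ast}B$ shows that the invertible $B$-modules are exactly the shifts $\Sigma^k B$. Returning to $\spf R$, each \'etale pullback of $M$ is then a shift of a free rank-one module --- in particular, up to a shift, a finitely generated projective module of rank one --- and, $\spf R$ being connected, this shift is independent of the chart; hence $M$ is a suspension of a line bundle, which proves the proposition modulo the lemma.

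The lemma is where the actual work lies, and is the main obstacle. That perfect $B$-modules are dualizable in $\Mod(B)^{\wedge}_I$ is clear: $B$ is $I$-complete because $\pi_0 B$ is, the $I$-complete $B$-modules form a stable subcategory closed under retracts, so perfect $B$-modules and their duals are $I$-complete and remain dualizable after $I$-completion. Conversely, a dualizable object $P$ of $\Mod(B)^{\wedge}_I$ has dualizable --- hence perfect --- reduction $P\otimes_B B/I$ over $B/I$, and one must propagate perfectness from $B/I$ up to $B$. This is a formal-GAGA-type assertion --- over a Noetherian $I$-adically complete ring, an $I$-complete module whose reduction modulo $I$ is perfect is itself perfect --- which I would prove by lifting a finite cell presentation of $P\otimes_B B/I$ along the tower of infinitesimal thickenings $\{B/I^n\}$ and assembling the $I$-complete limit, using Noetherian-ness and $I$-completeness of $\pi_0 B$; here the regularity hypothesis and the identification of dualizable with perfect $B$-modules serve to keep the ambient rings and their perfect modules under control. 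Carrying out this approximation carefully, together with the routine verification that the hypotheses descend along \'etale maps, localizations and completions, is the only genuinely technical part of the argument.
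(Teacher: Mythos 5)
Your overall architecture is parallel to the paper's for the easy containment: the paper likewise observes that a line bundle is a retract of a free module, hence dualizable and perfect, and then checks that the evaluation map $\ev:M\otimes_R M^\vee\to R$ is an equivalence via the argument of \cite[Proposition 2.9.4.2]{SAG} (since $\pi_0 R$ is already local, no refinement of the cover is needed --- the line bundle is free of rank one over $R$ itself). For the reverse containment the routes genuinely diverge. The paper proves nothing from scratch: it quotes \cite[Theorem 8.7]{baker} for the statement that every invertible object of $\Mod(R)$ is a shift of $R$, and leans on the standing hypothesis that dualizable $R$-modules coincide with perfect ones. You instead propose to reprove the Baker--Richter classification by a graded Nakayama argument (fine, though that is exactly the content of the citation and is not free), and to establish as a lemma that dualizable objects of $\Mod(R)^\wedge_I$ are precisely the perfect $R$-modules. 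To your credit, you have located the real crux: passing from invertibility in $\Mod(R)^\wedge_I$ to invertibility in $\Mod(R)$ is the step the paper treats most tersely, and your lemma --- which is strictly stronger than the paper's hypothesis (2), since that hypothesis concerns only $\Mod(R)$ --- is what is actually needed there.

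The gap is in your proposed proof of that lemma. The ``infinitesimal thickenings $\{B/I^n\}$'' and the ``reduction $P\otimes_B B/I$ over $B/I$'' do not exist in the structured setting: one cannot in general kill a regular sequence in an $\Eoo$-ring and obtain an $\Eoo$-ring (already $S/p$ fails to be $\Eoo$, and the quotients of Morava $E$-theory are at best $\mathbf{E}_1$), so there is no symmetric monoidal category $\Mod(B/I)$ in which to test dualizability of the reduction, and no tower of $\Eoo$-$B$-algebras along which to lift a finite cell presentation. The workable substitute is the Koszul-complex/residue-field technology of Hovey--Strickland (or the thick-subcategory arguments the paper invokes elsewhere via \cite{mathew-thick-subcat}): show that $\pi_\ast(P\otimes_B B/(x_1,\dots,x_n))$ is finite over the graded residue field and deduce perfectness by a finite induction on the length of the regular sequence. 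As written, your sketch of the key lemma would not assemble into a proof; either carry out that argument or do as the paper does and take dualizable-equals-perfect as an externally verified input for the rings in question.
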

\begin{proof}
    By \cite[Theorem 8.7]{baker}, any $R$-module in $\pic(\QCoh(\spec R))$ is
    equivalent to a shift of $R$. Clearly $R$ and $\Sigma R$ are perfect
    $R$-modules, so the second condition on $R$ implies that any invertible
    object of $\QCoh(\spf R)$ is in $\pic(\spf R)$. Conversely, if $M$ is a
    line bundle over $R$, then $M$ is dualizable. Indeed, dualizable objects
    are closed under retracts and wedges, so since $R$ is dualizable, any
    vector bundle over $R$ is dualizable. In particular, $M$ is a perfect
    $R$-module, so it suffices to show that $M$ is an invertible object of
    $\Mod(R)$. Let $M^\vee$ denote the dual of $M$, so there is an evaluation
    map $\ev:M \otimes_R M^\vee \to R$. Now arguing as in \cite[Proposition
    2.9.4.2]{SAG} (which requires \cite[Proposition 2.9.2.3]{SAG}, the proof of
    which does not need $R$ to be connective), we conclude that $\ev$ is an
    isomorphism, so $M$ is an invertible $R$-module.
\end{proof}

It is a general fact that the functor sending a symmetric monoidal
$\infty$-category $\cc$ to the space of invertible objects in $\cc$ commutes
with limits and filtered colimits (\cite[Proposition
2.2.3]{mathew-stojanoska}). By construction, $\QCoh(-)$ sends colimits to
limits of symmetric monoidal stable $\infty$-categories. As the functor from
the $\infty$-category of symmetric monoidal stable $\infty$-categories to the
$\infty$-category of symmetric monoidal $\infty$-categories reflects limits, it
follows that $\pic(-)$ takes homotopy colimits to homotopy limits.

In particular, if $G$ is a finite group acting on $R$ by $\Eoo$-maps, we have
an equivalence (see also \cite[\S 3.3]{mathew-stojanoska}):
$$\pic(\spf R/G) \simeq \pic(\spf R)^{hG}.$$
Note that the $G$-actions on $\pic(\spf R)$ and $\pic(\spec R)$ are the same.

Let $R$ be an even periodic $\Eoo$-ring, and let $M$ be a line bundle over $R$.
Then $\pi_\ast M$ is a projective $\pi_\ast R$-module. Indeed, $\pi_0 M$ is a
projective $\pi_0 R$-module. Since
$$\pi_n M \simeq \pi_n R \otimes_{\pi_0 R} \pi_0 M,$$
the result then follows from $R$ being even periodic and the fact that
projective modules are flat. If, moreover, $\pi_0 R$ is a local ring, then
$\pi_\ast M$ is a free $\pi_\ast R$-module since projective modules over a
local ring are free.

\section{Dualizing sheaves}
\subsection{The connective case}
If $f:X\to Y$ is a morphism of (derived) schemes, we will write $f^!$ to denote
a right adjoint to $f_\ast:\QCoh(X) \to \QCoh(Y)$. This is an abuse of notation
unless $f$ is a proper morphism.
\begin{definition}
    Let $\dX$ be a \emph{connective} derived stack. Let $\cf$ be a quasicoherent
    sheaf over $\dX$. We say that $\cf$ is a \emph{dualizing sheaf} if the
    following conditions are satisfied.
    \begin{enumerate}
	\item The map $\co_\dX\to \underline{\Map}_{\co_\dX}(\omega_\dX,
	    \omega_\dX)$ is an equivalence.
	\item $\omega_\dX$ is coconnected.
	\item $\omega_\dX$ is coherent.
	\item $\omega_\dX$ has finite injective dimension.
    \end{enumerate}
\end{definition}
\cite[Proposition 6.6.2.1]{SAG} shows that if $\cf$ and $\cg$ are two dualizing
sheaves, then there is a line bundle $\cL$ such that $\cf\simeq \cg\otimes\cL$.
We give a simple tool to identify dualizing sheaves over the ($p$-complete)
sphere spectrum.
\begin{theorem}\label{andersonid}
    Let $R$ be a coconnected $p$-complete spectrum such that $\pi_\ast R$ is a
    finite abelian group for $\ast\neq 1$ and $\pi_0 R$ is a finitely generated
    abelian group. Then the following statements are equivalent:
    \begin{enumerate}
	\item $\underline{\Map}(H\Z/p,R) \simeq \Sigma^{-1} H\Z/p$, and
	\item $R$ is a dualizing sheaf for $\spec S$.
    \end{enumerate}
\end{theorem}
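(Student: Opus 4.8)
The plan is to compare $R$ with the Anderson dualizing spectrum $I_\Z$ (or, in the $p$-complete setting, with its $p$-completion). Two facts drive the argument. First, $I_\Z$ is a dualizing sheaf for $\spec S$, and by \cite[Proposition 6.6.2.1]{SAG} any two dualizing sheaves differ by a line bundle; since the invertible objects of $\QCoh(\spec S)\simeq\Sp$ are the shifts of the sphere, the dualizing sheaves for $\spec S$ are exactly the $\Sigma^k I_\Z$, $k\in\Z$. Second, $\underline{\Map}(H\Z/p,I_\Z)\simeq\Sigma^{-1}H\Z/p$: writing $I_\Z=\fib(H\QQ\to I_{\QQ/\Z})$, one has $\underline{\Map}(H\Z/p,H\QQ)\simeq 0$ (multiplication by $p$ acts by $p\cdot\mathrm{id}$ on $H\Z/p$, which is null since $[H\Z/p,H\Z/p]\cong\Z/p$, while it is invertible on $H\QQ$), and $\underline{\Map}(H\Z/p,I_{\QQ/\Z})\simeq H\Z/p$ (its homotopy groups are $\pi_k\cong\Hom(\pi_{-k}H\Z/p,\QQ/\Z)$, hence $\Z/p$ for $k=0$ and $0$ otherwise), so taking fibers gives the claim.

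Granting these, $(2)\Rightarrow(1)$ is immediate: if $R$ is dualizing then $R\simeq\Sigma^k I_\Z$ for some $k$, and since $\pi_0 I_\Z\cong\Z$ while all other homotopy groups of $I_\Z$ are finite, the hypotheses on $\pi_\ast R$ force $k=0$; then $(1)$ follows from the second fact.

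For $(1)\Rightarrow(2)$, the substantive direction, the goal is to deduce $R\simeq I_\Z$ from $\underline{\Map}(H\Z/p,R)\simeq\Sigma^{-1}H\Z/p$ and the homotopy hypotheses --- the four dualizing-sheaf conditions then follow because $I_\Z$ is a dualizing sheaf (and conditions (2) and (3) are in any case forced, $R$ being coconnected and bounded above with finitely generated homotopy). I would argue in two stages. First, the rational part: the hypotheses make $\pi_\ast R$ torsion outside degree $0$, and reading condition (1) through the Postnikov tower of $R$ forces $\pi_0 R$ to have rank one, so $R\otimes H\QQ\simeq H\QQ$ in degree $0$, matching $I_\Z\otimes H\QQ$; choose a comparison map $c\colon R\to I_\Z$ realizing this rational equivalence. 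Second, the $p$-adic part: show $c$ is an equivalence. Since $R$ and $I_\Z$ have finitely generated homotopy and are bounded above, it suffices to check $c$ is an equivalence after $p$-completion, where both sides are built from $H\QQ$ (in degree $0$) and iterated extensions of shifts of $H\Z/p$; the comparison is then governed by the map induced by $c$ on $\underline{\Map}(H\Z/p,-)$, as a module over the Steenrod-algebra spectrum $\underline{\End}(H\Z/p)$. Since this induced map is the identity of $\Sigma^{-1}H\Z/p$ by hypothesis, an Adams-spectral-sequence / algebraicity argument --- in the spirit of Mandell's reconstruction of $p$-complete homotopy types from their cochains --- should show $c$ is a $\pi_\ast$-isomorphism, hence an equivalence. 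Alternatively, one could bypass this explicit comparison by invoking a local characterization of dualizing complexes: condition (1) records the behavior of $R$ at the residue field $\Z/p$, while the homotopy hypotheses supply coherence, finite injective dimension, and the behavior at the rational point.

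The main obstacle is the $p$-adic part of $(1)\Rightarrow(2)$: showing that the datum $\underline{\Map}(H\Z/p,R)\simeq\Sigma^{-1}H\Z/p$, with its $\underline{\End}(H\Z/p)$-module structure, genuinely determines the $p$-complete, bounded-above, finite-type spectrum $R$, and that this $p$-adic information is glued correctly to the rational line $H\QQ$. Everything else is a citation to \cite{SAG} or routine Postnikov-tower bookkeeping with finite homotopy groups.
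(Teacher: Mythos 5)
Your direction $(2)\Rightarrow(1)$ matches the paper: identify $R$ with $I_\Z$ up to a shift via \cite[Proposition 6.6.2.1]{SAG} and $\pic(\Sp)\cong\Z$, then compute $\underline{\Map}(H\Z/p,I_\Z)$ from the fiber sequence $I_\Z\to I_\QQ\to I_{\QQ/\Z}$. The problem is the converse, where your route diverges from the paper's and has a genuine gap. You aim to prove the much stronger statement $R\simeq I_\Z$ by choosing a comparison map $c\colon R\to I_\Z$ and arguing that $c$ is detected by $\underline{\Map}(H\Z/p,-)$ as a module over $\underline{\End}(H\Z/p)$. Two steps here are unjustified. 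First, you assert that the map induced by $c$ on $\underline{\Map}(H\Z/p,-)$ ``is the identity of $\Sigma^{-1}H\Z/p$ by hypothesis''; the hypothesis only provides an abstract equivalence $\underline{\Map}(H\Z/p,R)\simeq\Sigma^{-1}H\Z/p$ and says nothing about the map induced by your chosen $c$, which a priori could even be null. Second, the reconstruction principle you invoke --- that a $p$-complete, bounded-above, finite-type spectrum is determined by $\underline{\Map}(H\Z/p,-)$ with its $\underline{\End}(H\Z/p)$-module structure, ``in the spirit of Mandell'' --- is exactly the content you would need to prove, and you acknowledge it is not done. As written, the argument establishes neither the existence of a good $c$ nor that equivalences are detected this way.

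The paper avoids identifying $R$ with $I_\Z$ altogether. It verifies the dualizing-sheaf axioms directly: coconnectivity and coherence are immediate from the hypotheses, finite injective dimension follows from a Postnikov truncation argument, and for the biduality map $S\to\underline{\Map}(R,R)$ it follows \cite[Proposition 6.6.4.6]{SAG}: replace $S$ by $\tau_{\leq N}S$, which is built from $H\Z_p$ and shifts of $H\Z/p^k$, and check $\underline{\Map}(\underline{\Map}(M,R),R)\simeq M$ for $M=H\Z/p^k$ by passing to $H\Z/p$-modules, where hypothesis (1) says precisely that $R'=\underline{\Map}(H\Z/p,R)\simeq\Sigma^{-1}H\Z/p$ is a dualizing module for $\spec H\Z/p$; the case $M=H\Z_p$ is reduced to the torsion case via $\underline{\Map}(H\Z_p,R)\simeq\underline{\Map}(\Sigma^{-1}H\QQ_p/\Z_p,R)$. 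This is the step your proposal is missing: the hypothesis is used not to reconstruct $R$, but to make the double-dual computation over $H\Z/p$ go through cell by cell. I would recommend either carrying out this direct verification, or, if you want to keep your comparison-map strategy, precisely stating and proving the detection statement you need (and constructing $c$ so that it induces a nonzero, hence invertible, map on $\underline{\Map}(H\Z/p,-)$).
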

\begin{proof}
    It is easy to see that the Anderson dualizing spectrum $I_\Z$ (described in
    the introduction) is a dualizing $S$-module.
    
    Returning to the theorem, assume (2). The above discussion implies that any
    dualizing sheaf is equivalent to $I_\Z$ up to an element of $\pic(\Sp)$,
    which is isomorphic to $\Z$ generated by $S^1$ (see Lemma \ref{picard}).
    Without loss of generality, we may assume that $R = I_\Z$; then $R$ sits in
    a fiber sequence $R\to I_\QQ\to I_{\QQ/\Z}$, which implies that
    $$\underline{\Map}(H\Z/p,R) \simeq \Sigma^{-1}
    \underline{\Map}(H\Z/p,I_{\QQ/\Z}) \simeq \Sigma^{-1} H\Z/p.$$
    
    For the other direction, assume $R$ satisfies (1). Let $K$ be a dualizing
    sheaf for $\spec S$; translating the definition provided above, this means
    that $K$ is a spectrum such that
    \begin{itemize}
	\item[(a)] $K$ is coconnected, and $\pi_n K$ is a finitely generated
	    abelian group.
	\item[(b)] $K$ has finite injective dimension, i.e., there is an
	    integer $n$ such that for any $n$-coconnected spectrum $M$, we have
	    $\pi_i \underline{\Map}(M,K) = 0$ for $i<0$.
	\item[(c)] The natural map $S\to \underline{\Map}(K,K)$ is an
	    equivalence.
    \end{itemize}
    To show that $R$ is a dualizing sheaf for $\spec S$, we will check each of
    the conditions above.
    \begin{itemize} 
	\item[(a)] $R$ is coconnected by assumption, and $\pi_n R$ is a
	    finitely generated abelian group for all $n$.
	\item[(b)] We have $\pi_i \underline{\Map}(M, R) \simeq \pi_0
	    \underline{\Map}(\Sigma^i M, R)$. Suppose $R$ is $N$-coconnected
	    for some $N$; then
	    $$\pi_i \underline{\Map}(M, R) \simeq \pi_0
	    \underline{\Map}(\tau_{\leq N}\Sigma^i M, R).$$
	    Now, $\pi_k \Sigma^i M\simeq 0$ for $k > n + i$. If $n$ is
	    sufficiently large, then $\tau_{\leq N}\Sigma^i M$ is contractible,
	    so $\underline{\Map}(\pi_i M, R) \simeq 0$ for some $n\gg 0$.
    \item[(c)] Our proof follows \cite[Proposition 6.6.4.6]{SAG}. It suffices
	to prove that for every integer $k$, we have an equivalence $\pi_k S\to
	    \pi_k \underline{\Map}(R, R) \simeq \pi_k
	    \underline{\Map}(\underline{\Map}(S, R), R)$. Since $R$ is
	    $N$-coconnected, we can replace $S$ by its $N$-coconnected cover.
	    In this case, $\tau_{\leq N} S$ can be written as a composite of
	    extensions of $H\Z_p$ and shifts of Eilenberg-Maclane spectra
	    annihilated by a power of $p$, i.e., $H\Z/p^k$.
        
	    It therefore suffices to show that
	    $\underline{\Map}(\underline{\Map}(H\Z/p^k,R),R) \simeq H\Z/p^k$
	    and that $\underline{\Map}(\underline{\Map}(H\Z_p,R),R) \simeq
	    H\Z_p$. But
	    \begin{align*}
		\underline{\Map}(\underline{\Map}(H\Z/p^k,R),R) & \simeq
		\underline{\Map}(\underline{\Map}_{H\Z/p}(H\Z/p^k,R'),R') \\
		& \simeq
		\underline{\Map}_{H\Z/p}(\underline{\Map}_{H\Z/p}(H\Z/p^k,R'),R'),
	    \end{align*}
	    which is $H\Z/p^k$ since $R'$ is a dualizing sheaf for $\spec
	    H\Z/p$, where $R' = \underline{\Map}(H\Z/p,R)$; in particular, this
	    proves that $M\to \underline{\Map}(\underline{\Map}(M,R),R)$ is an
	    equivalence for every spectrum $M$ which is $p$-torsion. For
	    $H\Z_p$, we argue as follows: there is an equivalence
	    $\underline{\Map}(H\Z_p,R) \simeq \underline{\Map}(\Sigma^{-1}
	    H\QQ_p/\Z_p,R)$ since $R$ is torsion. We are now done by the
	    previous case.
	    %The spectrum $H\QQ_p/\Z_p$ is $p$-torsion, so we have reduced to
	    %the previous case.
    \end{itemize}
\end{proof}

The theory of dualizing sheaves over connective derived Deligne-Mumford stacks
is not sufficient for our purposes; we have to extend the definition to
even periodic derived stacks. Recall the following definition.
\begin{definition}
    An even periodic $\Eoo$-ring is an $\Eoo$-ring $R$ whose homotopy is
    concentrated in even dimensions such that $\pi_2 R$ is an invertible $\pi_0
    R$-module, satisfying the property that $\pi_{2k} R \simeq (\pi_2
    R)^{\otimes k}$ for all $k\in \Z$.
\end{definition}
\begin{definition}
    An even periodic derived stack $\dX$ is a derived stack such that for every
    \'etale morphism $\spec R\to X$ into the underlying Deligne-Mumford
    stack, the $\Eoo$-ring $\co_\dX(\spec R)$ is a even periodic $\Eoo$-ring.
\end{definition}
\begin{remark}
    Let $X$ be a Deligne-Mumford stack with a flat map $X\to \Mfg$. An even
    periodic refinement of $X$ is an even periodic derived stack $\dX$ lifting
    $X$ such that for every \'etale morphism $\spec R\to X$, the even periodic
    $\EOo$-ring $\co_\dX(\spec R)$ has formal group given by the (flat)
    composite $\spec R \to X\to \Mfg$.
\end{remark}

\subsection{The even periodic case}

If $R$ is an $\Eoo$-ring, the notion of an almost perfect $R$-module is only
well-defined when $R$ is connective. In the nonconnective setting, we will make
the following definition.
\begin{definition}
    Let $R$ be a Noetherian even periodic $\Eoo$-ring. An $R$-module $M$ is
    said to be almost perfect if it can be obtained as the geometric
    realization of a simplicial $R$-module $P_\bullet$, with each $P_n$ a free
    $R$-module of finite rank.
\end{definition}
If $\dX$ is a locally Noetherian even periodic derived stack, then a
quasicoherent sheaf $\cf$ on $\dX$ will be called almost perfect if, for every
\'etale morphism $f:\spec R\to \dX$, the pullback $f^\ast \cf$ is almost
perfect.

The definition of a dualizing sheaf is the following.
\begin{definition}\label{dualizing-sheaves}
    Let $\dX$ be a locally Noetherian even periodic derived stack. A
    quasicoherent sheaf $\omega_\dX$ on $\dX$ is a \emph{dualizing sheaf} if
    \begin{enumerate}
	\item The map $\co_\dX\to \underline{\Map}_{\co_\dX}(\omega_\dX,
	    \omega_\dX)$ is an equivalence.
	\item The functor $\DD(\cf) =
	    \underline{\Map}_{\co_\dX}(\cf,\omega_\dX)$ gives an
	    autoequivalence of the category of almost perfect quasicoherent
	    sheaves on $\dX$ with itself.
	\item For every \'etale map $f:\spec R\to \dX$, the $\pi_0 R$-module
	    $\pi_0 f^\ast \omega_\dX$ is a dualizing module for $\pi_0 R$.
    \end{enumerate}
\end{definition}

We will need to understand when the structure sheaf (or some shift of it) of a
derived stack $\dX$ is itself a dualizing complex.  If this is the case, we say
that $\dX$ is \emph{self-dual} or \emph{Gorenstein}.

We begin with a series of lemmas.
\begin{lemma}\label{etaledual}
    Let $f:\dX\to\dY$ be a \'etale surjection of locally Noetherian even
    periodic derived Deligne-Mumford stacks. Suppose that $\omega_\dY$ is a
    quasicoherent sheaf on $\dY$ such that $f^\ast \omega_\dY$ is a dualizing
    sheaf on $\dX$. Then $\omega_\dY$ is a dualizing sheaf on $\dY$.
\end{lemma}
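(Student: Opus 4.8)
The plan is to check the three conditions of Definition~\ref{dualizing-sheaves} for $\omega_\dY$ by descending each one along the \'etale surjection $f$, using that $f^\ast\omega_\dY$ is already known to be dualizing on $\dX$ and that all three conditions are \'etale-local in nature. The key general principle is that for an \'etale surjection, a morphism of quasicoherent sheaves on $\dY$ is an equivalence if and only if its pullback to $\dX$ is an equivalence (conservativity of \'etale descent, i.e.\ $\QCoh(\dY)$ is the totalization over the \v{C}ech nerve of $f$, and pullback is jointly conservative), and similarly pullback commutes with $\underline{\Map}_{\co}(-,-)$ when the source is almost perfect and the base is locally Noetherian.

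First I would treat condition~(1). Because $f$ is \'etale, $f^\ast$ is symmetric monoidal and commutes with internal mapping sheaves out of a dualizable (in particular almost perfect) object, so $f^\ast\underline{\Map}_{\co_\dY}(\omega_\dY,\omega_\dY)\simeq\underline{\Map}_{\co_\dX}(f^\ast\omega_\dY,f^\ast\omega_\dY)$, and the unit map $\co_\dY\to\underline{\Map}_{\co_\dY}(\omega_\dY,\omega_\dY)$ pulls back to the corresponding unit map on $\dX$. The latter is an equivalence since $f^\ast\omega_\dY$ is dualizing; by conservativity of pullback along the \'etale cover, the former is an equivalence too. Here one should first note that $\omega_\dY$ is almost perfect: almost perfectness is defined \'etale-locally, so it is inherited from $f^\ast\omega_\dY$. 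Second, condition~(3) is literally \'etale-local: for any \'etale $g:\spec A\to\dY$, one can find an \'etale cover of $\spec A$ by affines mapping \'etale to $\dX$ (pull back the cover $f$), and being a dualizing module for a Noetherian ring is itself \'etale-local on $\spec\pi_0 A$, so the condition for $\pi_0 g^\ast\omega_\dY$ follows from the same condition on $\dX$.

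The main work is condition~(2): that $\DD(-)=\underline{\Map}_{\co_\dY}(-,\omega_\dY)$ is an autoequivalence of $\QCoh(\dY)^{\mathrm{aperf}}$. I would show that $\DD_\dY$ is its own inverse by checking that the biduality map $\cf\to\DD_\dY\DD_\dY(\cf)$ is an equivalence for every almost perfect $\cf$ on $\dY$. Again pull back along $f$: since $f^\ast$ commutes with $\DD$ (both $\cf$ and $\DD_\dY\cf$ are almost perfect, and $\omega_\dY$ is almost perfect, so the mapping sheaves commute with the \'etale pullback $f^\ast$), the pullback of the biduality map for $\cf$ is the biduality map for $f^\ast\cf$ on $\dX$, which is an equivalence because $f^\ast\omega_\dY$ is dualizing. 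Conservativity then gives that $\cf\to\DD_\dY\DD_\dY\cf$ is an equivalence. One also needs that $\DD_\dY$ preserves almost perfectness, which again follows by pulling back to $\dX$. The one point requiring care, and the step I expect to be the main obstacle, is justifying that $f^\ast$ commutes with the internal $\underline{\Map}$ in the relevant generality: this is standard when the first argument is perfect/dualizable, but for merely almost perfect sheaves on a locally Noetherian even periodic stack one must invoke the coherence hypotheses (Noetherianness, plus the duality statement on $\dX$ ensuring $\DD_\dX$ lands in almost perfect objects) to reduce to a bounded-below situation where $\underline{\Map}$ is computed by truncated free resolutions, on which \'etale base change is manifest. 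Granting that, all three conditions descend and $\omega_\dY$ is a dualizing sheaf on $\dY$.
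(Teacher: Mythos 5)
Your proposal is correct and follows essentially the same route as the paper: all three conditions of Definition~\ref{dualizing-sheaves} are descended along the \'etale surjection, with condition~(3) handled by pulling back to an affine \'etale cover factoring through $\dX$ and invoking the \'etale-locality of dualizing modules in classical commutative algebra. You are in fact more explicit than the paper on conditions~(1) and~(2) --- the paper dismisses (1) as obvious and justifies (2) only by the preservation of almost perfectness under $f^\ast$, whereas you correctly identify the conservativity of \'etale pullback and the compatibility of $f^\ast$ with $\underline{\Map}$ out of almost perfect objects as the points that actually need to be invoked.
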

\begin{proof}
    Condition (1) is obvious, and condition (2) follows from the fact that
    $f^\ast$ preserves almost perfectness. It remains to check condition (3).
    Let $g:\spec R \to \dY$ be an \'etale map. We need to check that $\pi_0
    g^\ast \omega_\dY$ is a dualizing module for $\pi_0 R$. The statement of
    Lemma \ref{etaledual} is true in the classical setting, so it suffices to
    check that $p_0^\ast \pi_0 g^\ast \omega_\dY$ is a dualizing sheaf on $T$,
    for some \'etale surjection $p_0: T\to \spec \pi_0 R$. Let $\dZ$ denote the
    even periodic Deligne-Mumford stack $\dX\times_\dY \spec R$, and let $\spec
    A\to \dZ$ be an \'etale surjection. Let $q:\spec A\to \dX$ denote the
    induced \'etale morphism. The map $\dZ\to \spec R$ is also an \'etale
    surjection, so the composite $p:\spec A\to \spec R$ is an \'etale
    surjection. Since $p$ is \'etale, we have an isomorphism $p_0^\ast \pi_0
    g^\ast \omega_\dY \simeq \pi_0 p^\ast g^\ast \omega_\dY$. The equivalence
    $p^\ast g^\ast = q^\ast f^\ast$ shows that $p_0^\ast \pi_0 g^\ast
    \omega_\dY$ is equivalent to $\pi_0 q^\ast f^\ast \omega_\dY$ as a $\pi_0
    A$-module. Since $f^\ast \omega_\dY$ is a dualizing sheaf for $\dX$, and
    $q$ is an \'etale morphism, it follows that $\pi_0 q^\ast f^\ast
    \omega_\dY$ is a dualizing module for $\pi_0 A$, as desired.
\end{proof}

\begin{lemma}\label{IZ}
    Suppose $\dX$ is a locally Noetherian separated derived Deligne-Mumford
    stack which arises as an even-periodic refinement of a tame and flat map
    $X\to \Mfg$. Assume that $\dX$ is perfect and proper. Let $f:\dX\to \spec
    S$ be the structure morphism. Then $f^! I_\Z$ is a dualizing sheaf on
    $\dX$.
\end{lemma}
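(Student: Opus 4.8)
The plan is to verify the three conditions of Definition \ref{dualizing-sheaves} for $\omega_\dX := f^! I_\Z$, exploiting that $I_\Z$ is a dualizing sheaf for $\spec S$ (which was observed in the proof of Theorem \ref{andersonid}) together with the hypothesis that $f$ is proper and perfect. First I would recall the formal properties of the functor $f^!$ that we get from $f$ being proper: $f^!$ is right adjoint to $f_\ast$, and because $f$ is perfect and proper there is a projection formula identifying $f^! \cg$ with $f^\ast \cg \otimes_{\co_\dX} f^! \co_{\spec S}$ for $\cg \in \QCoh(\spec S)$; in particular $f^! I_\Z \simeq f^\ast I_\Z \otimes f^! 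S$. The main input is that base change along the \'etale charts $g \colon \spec R \to \dX$ is compatible with the exceptional pullback $f^!$, so that checking the local conditions on $\dX$ reduces to a computation over the $\Eoo$-ring $R = \co_\dX(\spec R)$. Since $\dX$ is an even-periodic refinement of a tame, flat $X \to \Mfg$ and is proper, each such $R$ is a Noetherian even periodic $\Eoo$-ring with $\pi_0 R$ a (suitably finite) $\Z_p$-algebra, and the composite $g^* f^! I_\Z$ can be computed as the $!$-pullback of $I_\Z$ along the composite $\spec R \to \spec S$, which is proper since $f$ is.

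For condition (3), I would argue as follows: $\pi_0 g^* f^! I_\Z$ is, up to the even-periodic grading, computed by the classical exceptional pullback of a dualizing complex along $\spec \pi_0 R \to \spec \Z_p$ (or $\spec \Z$). Since the structure map is finite type and $\pi_0 R$ is Noetherian, Grothendieck duality in the classical setting tells us that the $!$-pullback of a dualizing complex is again a dualizing complex, and taking $\pi_0$ (which is where the dualizing module lives, after the even shift) yields a dualizing module for $\pi_0 R$. Here I would use the tameness hypothesis to ensure that the relevant cohomology is concentrated appropriately and that passing from the derived to the classical statement does not lose information. For condition (1), the equivalence $\co_\dX \xrightarrow{\sim} \underline{\Map}_{\co_\dX}(f^! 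I_\Z, f^! I_\Z)$ follows by adjunction from the fact that $I_\Z$ is a dualizing sheaf for $\spec S$, together with the projection formula: mapping $f^! I_\Z$ to itself is the same as mapping $f^\ast I_\Z \otimes f^! S$ to itself, and after dualizing the invertible-once-we-know-it piece, this reduces to $\co_\dX \simeq \underline{\Map}(f^! S, f^! S)$, which is the Gorenstein-type statement encoded by properness plus perfectness of $f$. Condition (2), that $\DD(-) = \underline{\Map}_{\co_\dX}(-, f^! I_\Z)$ is an autoequivalence of almost perfect sheaves, I would deduce by checking it \'etale-locally: on each chart it becomes Grothendieck--Serre duality over the Noetherian even periodic $\Eoo$-ring $R$ with dualizing object $g^* f^! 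I_\Z$, and the double-duality map is an equivalence on almost perfect (equivalently, by the running hypotheses, coherent) modules by the classical argument lifted to the $\Eoo$-setting, using that $\dX$ is perfect so that almost perfect sheaves are built from the structure sheaf.

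The hard part will be setting up the compatibility of $f^!$ with \'etale base change and the projection formula in the nonconnective, even-periodic, formal setting, since the standard references for Grothendieck duality in spectral algebraic geometry (e.g.\ \cite{SAG}) are phrased for connective or for honest (non-formal) derived schemes; one has to check that properness and perfectness of $f$ survive in this generality and that $f^!$ commutes with the \'etale localizations defining $\QCoh(\dX)$. I expect this to go through because $\dX$ is assumed separated, locally Noetherian, perfect and proper, and arises from a tame flat map to $\Mfg$ — exactly the hypotheses that make the even-periodic situation behave like an ordinary proper Noetherian scheme after passing to charts — but making the reduction precise is where the real work lies. Once that is in place, conditions (1)--(3) all reduce to classical duality statements over the $\pi_0$ of the charts, which are standard.
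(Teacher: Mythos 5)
Your overall strategy---verify the three conditions of Definition \ref{dualizing-sheaves}, and reduce conditions (2) and (3) to the \'etale charts---matches the paper, and your treatments of (2) and (3) are essentially sound. For (3) the paper is more direct than your route through classical Grothendieck duality: it identifies $u^\ast f^! I_\Z$ with the function spectrum $\underline{\Map}(R, I_\Z) = I_\Z R$ and then uses the short exact sequence
$$0\to \Ext^1_\Z(\pi_{-n-1} R, \Z) \to \pi_n I_\Z R \to \Hom_\Z(\pi_{-n} R, \Z)\to 0,$$
where even periodicity of $R$ kills the $\Ext$ term; this is exactly the role of the evenness hypothesis that you only allude to when you say tameness/periodicity ensures ``the relevant cohomology is concentrated appropriately.'' Either route can be made to work, but you should be explicit about why the derived-to-classical passage loses nothing, and the collapse of this sequence is the cleanest way to say it.

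The genuine gap is in your argument for condition (1). You propose to write $f^! I_\Z \simeq f^\ast I_\Z \otimes f^! S$ and then ``dualize the invertible-once-we-know-it piece'' to reduce to $\co_\dX \simeq \underline{\Map}(f^! S, f^! S)$. This cancellation requires $f^\ast I_\Z$ to be invertible in $\QCoh(\dX)$, and it is not: $I_\Z$ is not an invertible spectrum ($\pic(\Sp)\cong \Z$ is generated by $S^1$, per Lemma \ref{picard}, and $I_\Z$ is not a sphere), and the invertibility of $f^! I_\Z$ is precisely the nontrivial conclusion of the later results (Theorem \ref{andinvert}, Proposition \ref{lurie-prop}), available only under an additional self-duality hypothesis. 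Moreover, the statement $\co_\dX \simeq \underline{\Map}(f^! S, f^! S)$ that you fall back on is not a black box ``encoded by properness plus perfectness''; it is the same kind of assertion you are trying to prove. The paper's argument avoids this entirely: by adjunction, condition (1) amounts to showing that for every $\cf\in\QCoh(\dX)$ the natural map $\Map_\dX(\cf,\co_\dX)\to \Map(f_\ast(\cf\otimes f^! I_\Z), I_\Z)$ is an equivalence; since $\dX$ is a perfect stack one reduces to $\cf$ perfect, rewrites $f_\ast(\cf\otimes f^! I_\Z)\simeq \underline{\Map}(f_\ast\cf^\vee, I_\Z)$, and concludes because $I_\Z$ is dualizing for $\spec S$ and $f_\ast\cf^\vee$ is almost perfect over the sphere (using that global sections commute with filtered colimits, \cite[Theorem 4.14]{mathew-meier}). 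This compact-generation argument is the missing idea in your proposal; without it, or some substitute, condition (1) is not established.
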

\begin{proof} 
    We will check the conditions of Definition \ref{dualizing-sheaves}.
    \begin{enumerate}
	\item We need to show that the map $\co_\dX\to
	    \underline{\Map}_{\co_\dX}(f^! I_\Z, f^! I_\Z)$ is an equivalence,
	    i.e., that for each $\cf\in \QCoh(\dX)$, the map
	    $$\theta:\Map_{\dX}(\cf, \co_\dX) \to \Map(f_\ast(\cf\otimes f^!
	    I_\Z), I_\Z)$$
	    is an equivalence. The same proof as \cite[Proposition
	    6.6.3.1]{SAG} can be used here, but we will recall the details for
	    the sake of completeness. As $\dX$ is a perfect stack, the sheaf
	    $\cf$ is a filtered colimit of perfect objects. In particular, we
	    may assume that $\cf$ is perfect, so that
	    $$f_\ast(\cf\otimes f^! I_\Z) \simeq \underline{\Map}(f_\ast
	    \cf^\vee, I_\Z).$$
	    Since $I_\Z$ is a dualizing complex for $\spec S$, it suffices to
	    show that $f_\ast \cf^\vee$ is almost perfect over the sphere. It
	    suffices to know that the global sections functor (to
	    $\Gamma(\dX,\co_\dX)$-modules) preserves filtered colimits, which
	    follows from \cite[Theorem 4.14]{mathew-meier}.
	\item We need to show that there is an equivalence $\cf\xar{\simeq}
	    \underline{\Map}_{\co_\dX}(\underline{\Map}_{\co_\dX}(\cf, f^!
	    I_\Z), f^! I_\Z)$ for every almost perfect quasicoherent sheaf
	    $\cf$. The assertion is local, so we may assume that $\dX = \spec
	    R$ is affine. In this case, the result follows from the fact that
	    for any almost perfect $R$-module $M$, the map $M\to
	    \underline{\Map}_R(\underline{\Map}_R(M, I_\Z R), I_\Z R)$ is an
	    equivalence.
	\item Let $u:\spec R\to \dX$ be an \'etale morphism. We need to show
	    that $\pi_0 u^\ast f^! I_\Z$ is a dualizing sheaf for $\pi_0 R$.
	    The $R$-module $u^\ast f^! I_\Z$ is the function spectrum
	    $\underline{\Map}(R, I_\Z) = I_\Z R$. There is a short exact
	    sequence
	    $$0\to \Ext^1_\Z(\pi_{-n-1} R, \Z) \to \pi_n I_\Z R \to
	    \Hom_\Z(\pi_{-n} R, \Z)\to 0.$$
	    Since $R$ is an even periodic cohomology theory, it follows
	    that $\pi_n I_\Z R \simeq \Hom_\Z(\pi_{-n} R, \Z)$, so $\pi_0 I_\Z
	    R$ is indeed a dualizing sheaf for $\pi_0 R$, as desired.
    \end{enumerate}
\end{proof}

\begin{remark}
    Suppose $\dY$ is a Deligne-Mumford stack for which the structure morphism
    $\dY\to \spec S$ factors as $\dY\hookrightarrow \dX\to \spec S$, where
    $u:\dY\hookrightarrow \dX$ is an open immersion, and $f:\dX\to \spec S$ is
    a Deligne-Mumford stack satisfying the conditions of Lemma \ref{IZ}. Then
    $u^\ast f^! I_\Z$ is a dualizing sheaf on $\dY$. In the classical setting,
    Nagata's compactification theorem gives such a factoring when $\dY$ is a
    scheme which is separated and of finite type. We do not know of an analogue
    of this result in the derived setting.
\end{remark}

\begin{remark}\label{redemption}
    Lemma \ref{IZ} is also true if $\dX$ is replaced by $\spf E$, where $E$ is
    a Morava $E$-theory (see Section \ref{e-thy}).
\end{remark}

We say that a locally Noetherian Deligne-Mumford stack $X$ has finite global
dimension if there is a finite \'etale cover $\spec R\to X$ with $R$ a
Noetherian ring of finite global dimension.
\begin{lemma}\label{uniqueness-up-to-line-bundles}
    Let $\dX$ be a locally Noetherian even periodic derived Deligne-Mumford
    stack of finite global dimension\footnote{In general, this is stronger than
    having finite Krull dimension.}, and let $\omega$ be a dualizing sheaf on
    $\dX$. A quasicoherent sheaf $\omega'$ on $\dX$ is a dualizing sheaf if and
    only if there is an equivalence $\omega' \simeq \omega\otimes \cL$ for
    $\cL$ a line bundle on $\dX$.
\end{lemma}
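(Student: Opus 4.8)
The plan is to prove the two directions of the ``if and only if'' separately, with the ``if'' direction being essentially formal and the ``only if'' direction requiring the bulk of the work.

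\textbf{The easy direction.} Suppose $\omega' \simeq \omega \otimes \cL$ for a line bundle $\cL$. I would check the three conditions of Definition \ref{dualizing-sheaves} directly. For condition (1), tensoring with the invertible sheaf $\cL$ gives $\underline{\Map}_{\co_\dX}(\omega\otimes\cL, \omega\otimes\cL) \simeq \underline{\Map}_{\co_\dX}(\omega,\omega)$, which is $\co_\dX$ by hypothesis. For condition (2), note that the functor $\cf \mapsto \cf\otimes\cL$ is an autoequivalence of almost perfect quasicoherent sheaves (its inverse is tensoring with $\cL^{\otimes -1}$), so $\DD_{\omega'} = \underline{\Map}_{\co_\dX}(-,\omega\otimes\cL)$ is the composite of $\DD_\omega$ with this autoequivalence up to a shuffle of the $\cL$'s, hence an autoequivalence. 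For condition (3), for any \'etale $f:\spec R\to\dX$ we have $\pi_0 f^\ast\omega' \simeq \pi_0 f^\ast\omega \otimes_{\pi_0 R}\pi_0 f^\ast\cL$, and $\pi_0 f^\ast\cL$ is an invertible $\pi_0 R$-module (by the discussion of vector bundles in \S2.2, using that $R$ is even periodic so $\pi_\ast$ of a line bundle is projective of rank one), and tensoring a dualizing module for $\pi_0 R$ by an invertible module yields another dualizing module.

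\textbf{The hard direction.} Suppose $\omega'$ is a dualizing sheaf; I want to produce a line bundle $\cL$ with $\omega' \simeq \omega\otimes\cL$. The natural candidate is $\cL := \DD_\omega(\omega') = \underline{\Map}_{\co_\dX}(\omega',\omega)$; since $\omega$ is dualizing and $\omega'$ is almost perfect (it is coherent, hence almost perfect on a locally Noetherian stack), $\cL$ is almost perfect, and by the reconstruction property (condition (2) for $\omega$ applied to $\omega'$) we get $\omega' \simeq \DD_\omega(\cL)$, so it suffices to show $\cL$ is a line bundle. The strategy is to check the local criterion of Definition \ref{vb} after pulling back along an \'etale $f:\spec R\to\dX$: I must show $f^\ast\cL \simeq \underline{\Map}_R(f^\ast\omega', f^\ast\omega)$ is a projective $R$-module whose rank over any residue field is $1$. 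Both $f^\ast\omega$ and $f^\ast\omega'$ are dualizing modules for the even periodic Noetherian $R$, and the key input is that over such an $R$ (whose $\pi_0$ has finite global dimension, by the finite-global-dimension hypothesis on $\dX$ — this is exactly why that hypothesis appears), any two dualizing modules differ by tensoring with an invertible $R$-module. This is the even-periodic analogue of \cite[Proposition 6.6.2.1]{SAG}; one proves it by first reducing to $\pi_0 R$ via even periodicity (so $\pi_0\omega, \pi_0\omega'$ are ordinary dualizing modules for the Noetherian ring $\pi_0 R$ of finite Krull dimension, and classically differ by an invertible $\pi_0 R$-module $L_0$), and then checking that $\underline{\Map}_R(f^\ast\omega', f^\ast\omega)$ is the invertible $R$-module corresponding to $L_0$ by examining homotopy groups — the finite injective dimension built into ``dualizing'' forces the Ext-term contributions to vanish in a range, so $\pi_\ast$ of the mapping spectrum is concentrated appropriately and is projective of rank one. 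Then I assemble these local statements: projectivity and the rank condition are \'etale-local by Definition \ref{vb}, so $\cL$ is a line bundle on $\dX$, and we are done.

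\textbf{Main obstacle.} The genuinely non-formal step is the local claim that two dualizing $R$-modules over a Noetherian even periodic $\Eoo$-ring $R$ of finite global dimension differ by an invertible $R$-module. Classically this rests on the fact that $\underline{\Map}_{\pi_0 R}(\omega_0', \omega_0)$ is invertible, which uses finite injective dimension crucially; in the derived even-periodic setting one has to control the spectral sequence (or the fiber sequences relating $\Ext$ and $\Hom$ as in the proof of Lemma \ref{IZ}(3)) to see that the higher homotopy of the mapping spectrum contributes nothing, so that it is concentrated in degrees $0$ and $2$ (by periodicity) and is the pullback of an honest line bundle. I would expect to invoke the finite-global-dimension hypothesis precisely to guarantee finite injective dimension of the dualizing modules locally, and to note, as in the cited results from \cite{SAG}, that the argument is insensitive to non-connectivity. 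Once this local input is in hand, globalizing via \'etale descent (Lemma \ref{etaledual}-style bookkeeping) and the definition of a vector bundle is routine.
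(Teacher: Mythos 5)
Your skeleton matches the paper's: the easy direction is checked condition by condition exactly as in the paper (with $\pi_0 f^\ast \cL$ invertible over $\pi_0 R$ handling condition (3)), and for the converse both you and the paper take $\cL$ to be the internal mapping sheaf between the two dualizing sheaves and aim to show it is a line bundle. The divergence --- and the gap --- is in how that invertibility is established. The paper never computes $\pi_\ast \underline{\Map}_R(f^\ast\omega', f^\ast\omega)$ directly. Instead it proves that for every almost perfect $\cf$ the natural map $\cf\otimes\cL\to \underline{\Map}_{\co_\dX}(\underline{\Map}_{\co_\dX}(\cf,\omega),\omega')$ is an equivalence, by reducing to the affine statement for \emph{perfect} modules, where it is formal; the reduction is the observation that over an even periodic Noetherian $\Eoo$-ring of finite global dimension an almost perfect module is already perfect (by periodicity only $\pi_0$ and $\pi_1$ matter, both are finitely generated, and then \cite[Proposition 2.1]{mathew-thick-subcat} applies). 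Invertibility of $\cL$ then falls out by taking $\cf = \underline{\Map}(\omega',\omega)$, and $\omega\otimes\cL\simeq\omega'$ by taking $\cf=\omega$. This is the precise point where the finite global dimension hypothesis is used.

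Your version of the crux --- reduce to the classical uniqueness of dualizing modules over $\pi_0 R$ and then ``examine homotopy groups'' of $\underline{\Map}_R(f^\ast\omega',f^\ast\omega)$ --- has a genuine hole. To collapse the Ext spectral sequence computing those homotopy groups you need $\pi_\ast f^\ast\omega'$ to be projective over $\pi_\ast R$; but Definition \ref{dualizing-sheaves} only constrains $\pi_0 f^\ast\omega'$ and says nothing about $\pi_1$, and ``finite injective dimension forces the Ext-term contributions to vanish in a range'' is not an argument here --- finite injective dimension is not even part of the even periodic definition you are working with. Knowing that $\pi_0\omega$ and $\pi_0\omega'$ are classical dualizing modules differing by $L_0$ does not by itself identify the derived mapping spectrum, nor show it is concentrated in even degrees. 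The ingredient that would make your computation rigorous is exactly the paper's ``almost perfect $\Rightarrow$ perfect'' reduction; without it, your key local claim is asserted rather than proved. I would either import that reduction explicitly, or drop the homotopy-group computation in favor of the formal tensor-hom argument, which sidesteps the question of $\pi_1\omega'$ entirely.
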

\begin{proof}
    Suppose $\cL$ is a line bundle on $\dX$, and let $\omega' = \omega \otimes
    \cL$. Conditions (1) and (2) of Definition \ref{dualizing-sheaves} are
    immediate. Suppose $f:\spec R\to \dX$ is an \'etale morphism. There is an
    isomorphism $\pi_0 f^\ast \omega' \simeq \pi_0 f^\ast \omega \otimes_{\pi_0
    R} \pi_0 \cL$. Since $\cL$ is a line bundle, $\pi_0 \cL$ is a line bundle
    over $\pi_0 R$, so $\pi_0 f^\ast \omega'$ is a dualizing sheaf on $\pi_0
    R$, establishing condition (3).

    The proof of the converse follows \cite[Proposition 6.6.2.1]{SAG}. Suppose
    $\omega$ and $\omega'$ are dualizing sheaves. Let $\cL =
    \underline{\Map}_{\co_\dX}(\omega, \omega')$. We will show that $\cL$ is a
    line bundle, and that $\omega\otimes \cL \simeq \omega'$. Suppose $\cf$ is
    an almost perfect quasicoherent sheaf on $\dX$. We will first show that
    $$\cf\otimes \cL\to
    \underline{\Map}_{\co_\dX}(\underline{\Map}_{\co_\dX}(\cf, \omega),
    \omega')$$
    is an equivalence. To show this, it in turn suffices to show that, if $R$
    is an even periodic Noetherian $\EOo$-ring of finite global dimension, and
    $K$ and $K'$ are dualizing complexes, then for every almost perfect
    $R$-module $M$, the map
    $$M\otimes_R \underline{\Map}_R(K,K') \to
    \underline{\Map}_R(\underline{\Map}_R(M,K),K')$$
    is an equivalence. Since the statement is true for perfect $R$-modules, it
    suffices to reduce the result to this case.

    If $M$ is almost perfect, then $\pi_0 M$ and $\pi_1 M$ are both finitely
    generated $\pi_0 R$-modules. The statement for $\pi_0 M$ is clear from the
    definition. Choose a finitely generated free module $P\to M$ inducing the
    surjection $\pi_0 P\to \pi_0 M$. The fiber $P'$ of $P\to M$ is almost
    perfect, so $\pi_0 P'$ is also finitely generated. The long exact sequence
    in homotopy gives a short exact sequence
    $$0\to \coker(\pi_1 P'\to \pi_1 P) \to \pi_1 M\to \ker(\pi_0 P'\to \pi_0
    P)\to 0.$$
    The $\pi_0 R$-modules $\coker(\pi_1 P'\to \pi_1 P)$ and $\ker(\pi_0 P'\to
    \pi_0 P)$ are finitely generated, so $\pi_1 M$ is also finitely generated.
    By \cite[Proposition 2.1]{mathew-thick-subcat}, the $R$-module $M$ is
    perfect.

    Having established that $\cf\otimes \cL\to
    \underline{\Map}_{\co_\dX}(\underline{\Map}_{\co_\dX}(\cf, \omega),
    \omega')$ is an equivalence for any almost perfect sheaf $\cf$, it follows
    that, setting $\cf = \underline{\Map}_{\co_\dX}(\omega',\omega)$, there is
    an equivalence $\cf\otimes \cL \simeq \co_{\dX}$, so $\cL$ is a line
    bundle. Moreover, the same result, when applied to $\cf = \omega$, shows
    that $\omega\otimes \cL \simeq \omega'$, as desired.
\end{proof}

\begin{remark}\label{underlying-gorenstein}
    Suppose $\dX$ satisfies the conditions of Lemma
    \ref{uniqueness-up-to-line-bundles}. Let $X$ denote the underlying stack of
    $\dX$. Then $\dX$ is self-dual if and only if $X$ is Gorenstein. Indeed,
    suppose $\dX$ is self-dual. Let $f_0:\spec R \to X$ be an \'etale map. This
    refines to an \'etale map $f:\spec \wt{R}\to \dX$. By construction, $\pi_0
    \wt{R} = R$. It follows from the definition that the $R$-module $\pi_0
    f^\ast \co_\dX = \pi_0 \wt{R} = R$ is a dualizing module, as desired. For
    the converse, suppose $X$ is Gorenstein, and let $\omega_\dX = \co_\dX$.
    Condition (1) of Definition \ref{dualizing-sheaves} is immediate.  To prove
    condition (2), note that the proof of Lemma
    \ref{uniqueness-up-to-line-bundles} shows that an almost perfect
    quasicoherent sheaf $\cf$ on $\dX$ is perfect, in which case the condition
    is easy to establish. Finally, condition (3) follows from the assumption
    that $X$ is self-dual.
\end{remark}

The above discussion yields the following result.
\begin{theorem}\label{andinvert}
    Let $\dX$ be a perfect and proper locally Noetherian separated derived
    Deligne-Mumford stack which arises as the even-periodic refinement of a
    tame and flat map $X\to \Mfg$, where $X$ has finite global dimension. If
    $\dX$ is self-dual, then $f^! I_\Z$ is invertible, where $f:\dX\to\spec S$
    is the structure map.
\end{theorem}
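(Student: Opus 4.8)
The plan is to combine Lemma \ref{IZ} with Lemma \ref{uniqueness-up-to-line-bundles} and the characterization of self-duality from Remark \ref{underlying-gorenstein}. First I would observe that $\dX$ satisfies the hypotheses of Lemma \ref{IZ}: it is perfect, proper, locally Noetherian, separated, and arises as the even-periodic refinement of a tame and flat map $X\to \Mfg$. Therefore $f^! I_\Z$ is a dualizing sheaf on $\dX$, where $f:\dX\to\spec S$ is the structure map. This gives us one concrete dualizing sheaf to work with.

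Next I would invoke the hypothesis that $X$ has finite global dimension, so that Lemma \ref{uniqueness-up-to-line-bundles} applies to $\dX$: any two dualizing sheaves on $\dX$ differ by tensoring with a line bundle. Now the assumption that $\dX$ is self-dual means, by Remark \ref{underlying-gorenstein} (or directly by the definition of self-dual), that $\co_\dX$ is itself a dualizing sheaf on $\dX$. Applying Lemma \ref{uniqueness-up-to-line-bundles} to the two dualizing sheaves $\omega = \co_\dX$ and $\omega' = f^! I_\Z$, we conclude that there is a line bundle $\cL$ on $\dX$ with
$$f^! I_\Z \simeq \co_\dX \otimes \cL \simeq \cL.$$
Since line bundles are invertible objects of $\QCoh(\dX)$, it follows that $f^! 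I_\Z$ is invertible, as claimed.

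The only point requiring care is verifying that $\co_\dX$ really does qualify as a dualizing sheaf in the sense of Definition \ref{dualizing-sheaves} when $\dX$ is self-dual --- but this is precisely the content of Remark \ref{underlying-gorenstein}, whose proof notes that condition (1) is immediate, condition (2) follows because finite global dimension forces almost perfect sheaves to be perfect (as in the proof of Lemma \ref{uniqueness-up-to-line-bundles}), and condition (3) is the Gorenstein hypothesis on $X$. So there is no real obstacle here; the theorem is essentially a formal consequence of the two preceding lemmas once one unwinds what ``self-dual'' means. The main subtlety, if any, is bookkeeping: checking that the finite-global-dimension and perfectness hypotheses are genuinely needed to license both the uniqueness statement and the identification of almost perfect with perfect sheaves, so that the chain of implications goes through without gaps.
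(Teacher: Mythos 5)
Your proposal is correct and is exactly the argument the paper intends: the paper gives no explicit proof, stating only that ``the above discussion yields'' the theorem, and that discussion is precisely the combination of Lemma \ref{IZ}, Lemma \ref{uniqueness-up-to-line-bundles}, and the meaning of self-duality that you assemble. The only cosmetic point is that ``self-dual'' in the paper allows a shift of $\co_\dX$ to be the dualizing sheaf, so the conclusion is $f^! I_\Z \simeq \Sigma^k \cL$, which is still invertible.
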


We will also prove the following result, which we learnt from Jacob Lurie.
\begin{prop}\label{lurie-prop}
    Let $f_0: X\to \spec \Z_p$ be a smooth and proper scheme of relative
    dimension $d$. Suppose $f:\dX\to \spec S$ is an even-periodic refinement of
    $X$. Then $f^! I_\Z$ is in $\pic(\dX)$.
\end{prop}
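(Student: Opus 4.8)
The plan is to reduce the claim to two inputs: the invertibility of the relative dualizing sheaf of a smooth proper morphism, and the fact that $I_\Z$ is an invertible sheaf on $\spec S$ (i.e., an element of $\pic(\Sp)$, which it is since $\pic(\Sp) \simeq \Z$). First I would factor the structure map $f:\dX\to \spec S$ as $\dX \xrightarrow{g} \spf \Z_p \xrightarrow{h} \spec S$, where I use that the even-periodic refinement $\dX$ sits over the formal scheme $\spf E_0$-style base; more precisely, the refinement of $X\to \spec \Z_p$ gives a morphism of derived stacks covering $f_0$, and $\spf \Z_p \to \spec S$ is (the completion of) the unit map. Since $I_\Z$ is invertible over $\spec S$ and pullback of invertible sheaves is invertible, $h^! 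I_\Z$ agrees with $h^\ast I_\Z$ up to a twist by the dualizing sheaf of $\spf\Z_p/\spec S$; in fact $\spec \Z_p$ is regular of dimension $1$, and one checks $h^! I_\Z$ is again invertible on $\spf \Z_p$ (this is the affine, classical case, handled by the same computation as condition (3) in the proof of Lemma \ref{IZ}). So the question is local over the base and reduces to showing $g^!$ of an invertible sheaf is invertible.

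The heart of the argument is therefore: for $f_0:X\to\spec\Z_p$ smooth and proper of relative dimension $d$, with even-periodic refinement $g:\dX\to\spf\Z_p$, the functor $g^!$ sends invertible sheaves to invertible sheaves. I would prove this by showing $g^!\co_{\spf\Z_p}$ is itself invertible — indeed, it should be a shift $\Sigma^{-2d}$ of a classical line bundle (the derived analogue of $\omega_{X/\Z_p}[d]$, made $2$-periodic). The key point is that $g$ is a proper morphism which is "smooth" in the refined sense: étale-locally on $X$, the refinement looks like an even-periodic $\Eoo$-ring which is smooth over the completed base, so the relative cotangent complex $L_{\dX/\spf\Z_p}$ is a vector bundle of rank $d$ (after the periodicity shift). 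Then $g^!\co$ is computed, as in \cite[\S 6.4]{SAG} or by Grothendieck duality for perfect proper morphisms, as the determinant $\det L_{\dX/\spf\Z_p}$ suitably shifted — and determinants of vector bundles are line bundles. Concretely: $g$ is a perfect proper morphism of Noetherian derived stacks (perfectness and properness of $\dX$ over $\spec S$, together with properness of $\spf\Z_p\to\spec S$, gives it), so $g_\ast$ has a right adjoint $g^!$, and the relative dualizing sheaf $\omega_{\dX/\spf\Z_p} := g^!\co_{\spf\Z_p}$ satisfies the projection-type formula making it invertible once we know the fibers are Gorenstein, which holds because $X\to\spec\Z_p$ is smooth (hence Gorenstein, hence the fibers of the refinement are too).

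Finally I would assemble the pieces: $f^! I_\Z = g^! h^! I_\Z = g^!(\mathcal{L})$ for $\mathcal{L}$ an invertible sheaf on $\spf\Z_p$, and since $g^!(\mathcal{L}) \simeq g^\ast\mathcal{L} \otimes \omega_{\dX/\spf\Z_p}$ (the relative dualizing sheaf is defined so that $g^!$ differs from $g^\ast$ by tensoring with it, for perfect proper $g$), and both $g^\ast\mathcal{L}$ and $\omega_{\dX/\spf\Z_p}$ are invertible, their tensor product is invertible, i.e., lies in $\pic(\dX)$. I expect the main obstacle to be establishing that $g^!\co$ is invertible — i.e., that the relative dualizing sheaf of the refined smooth proper morphism is a genuine line bundle (up to periodicity shift) rather than merely a perfect complex. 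This requires knowing that "smooth proper" morphisms of even-periodic derived stacks have invertible relative dualizing sheaves, which should follow from the fiberwise Gorenstein property plus base-change for $g^!$, but pinning down the fiberwise statement (that an even-periodic refinement of a smooth $\Z_p$-algebra is "Gorenstein" in the appropriate refined sense) is the technical core. Everything else — the reduction over the base, the invertibility of $h^!I_\Z$ on $\spf\Z_p$, and the fact that invertibility is preserved under $g^\ast$ and tensor product — is routine given the results recalled earlier in this section.
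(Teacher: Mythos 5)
Your opening move --- factoring $f$ as $\dX \xrightarrow{g} \spf \Z_p \xrightarrow{h} \spec S$ --- does not exist, and this is a genuine gap rather than a presentational issue. A morphism of derived stacks $\dX \to \spf \Z_p$ (or $\spec H\Z_p$) would require $\co_\dX$ to be a sheaf of $H\Z_p$-algebras, i.e., an $\Eoo$-ring map $H\Z_p \to \Gamma(\dX, \co_\dX)$. An even-periodic refinement of $X \to \Mfg$ is almost never an $H\Z$-algebra (think of $KU$ or Morava $E$-theory); the only thing the hypothesis gives you is that $\pi_0 \co_\dX = \co_X$ is a $\Z_p$-algebra. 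So the classical map $f_0: X \to \spec \Z_p$ lives only on $\pi_0$ and cannot be promoted to a map out of $\dX$ itself. Everything downstream of this factorization --- the identification $f^! I_\Z = g^! h^! I_\Z$, the appeal to a relative dualizing sheaf $\omega_{\dX/\spf\Z_p}$ --- inherits the problem. You also correctly flag that the invertibility of $g^!\co$ for a ``refined smooth proper morphism'' is the technical core, but you leave it as an expectation; since the morphism $g$ itself is not available, this cannot be repaired in place.

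The paper's proof routes around both difficulties by interposing the connective cover: it factors through $\tau_{\geq 0}\dX$, which receives a closed immersion $i: X \to \tau_{\geq 0}\dX$ from the underlying classical scheme, and fits into a commutative square with $f_0: X \to \spec \Z_p$ and $q: \spec \Z_p \to \spec S$. Classical Serre duality identifies $f_0^! \Z_p \simeq \omega_X[d]$, hence $i^! g^! I_\Z = f_0^! q^! I_\Z \in \pic(X)$ where $g: \tau_{\geq 0}\dX \to \spec S$. Even periodicity then forces $g^! I_\Z$ to look like $\omega_X[d] \oplus \omega_X[d-2] \oplus \cdots$, and passing to $\dX$ via $\underline{\Map}_{\co_{\tau_{\geq 0}\dX}}(\co_\dX, g^! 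I_\Z)$ produces a $2$-periodic version of $\omega_X$ concentrated in a single parity, which is invertible. Your instinct that the answer should be a periodicized $\omega_{X/\Z_p}[d]$ is exactly right; the missing idea is that the comparison with classical Serre duality must be made through the connective cover and the inclusion of $X$, not through a (nonexistent) map of $\dX$ to $\spf \Z_p$.
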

\begin{proof}
    Denote by $g:\tau_{\geq 0} \dX \to \spec S$ the connective cover of $\dX$;
    there are morphisms $i: X\to \tau_{\geq 0} \dX$ and $j: \dX\to \tau_{\geq
    0} \dX$. By Serre duality, if $f_0: X\to \spec \Z_p$ is of relative
    dimension $d$, then $f_0^! \Z_p$ is isomorphic to the line bundle
    $\omega_X$ shifted up to degree $d$. Therefore $f_0^! \Z_p\in \pic(X)$.
    There is a commutative diagram
    \begin{align*}
	\xymatrix{
	    X\ar[r]^i \ar[d]^{f_0} & \tau_{\geq 0} \dX\ar[d]^g\\
	    \spec \Z_p \ar[r]_q & \spec S.
	    }
    \end{align*}
    It follows that
    $$f_0^! \Z = f_0^! q^! I_\Z = i^! g^! I_\Z,$$
    so $i^! g^!  I_\Z \in \pic(X)$. It is easy to see that this implies that
    the sheaf $g^! I_\Z$ on $\tau_{\geq 0} \dX$ looks like $\omega_X[d] \oplus
    \omega_X[d-2] \oplus \cdots$, so that $f^! I_\Z = \Hom_{\co_{\tau_{\geq 0}
    \dX}}(\co_\dX, g^! I_\Z)$ looks like a $2$-periodic version of $\omega_X$,
    concentrated in degrees of the same parity as $d$. In particular, $f^!
    I_\Z$ is in $\pic(\dX)$, as desired.
\end{proof}

\section{$E$-theory}\label{e-thy}
Let $\GG$ be a formal group of height $n$ over a perfect field $k$ of
characteristic $p>0$.
\begin{definition}
    An $\Eoo$-ring $E$ is said to be a \emph{Morava $E$-theory} if the
    following conditions are satisfied:
    \begin{enumerate}
	\item $E$ is even periodic, with a(n invertible) periodicity generator
	    $\beta\in \pi_2 E$.
	\item $\pi_0 E$ is a complete local Noetherian ring with residue field
	    $k$.
	\item The formal group $\spf \pi_0 \underline{\Map}_S(\Sigma^\infty_+
	    \CP^\infty, E)$ over $\pi_0 E$ is a universal deformation of $\GG$.
    \end{enumerate}
\end{definition}
\emph{A priori}, it is not clear that Morava $E$-theory exists; however, it is
a theorem of Goerss-Hopkins-Miller that every pair $(k, \GG)$ of an
perfect field $k$ along with a finite height formal group begets a Morava
$E$-theory $E$. The choice of $k$ and $\GG$ will be remain implicit.

A theorem of Lazard's says that all formal groups of the same height are
isomorphic over an algebraically closed field $k$ of characteristic $p$. A
particular choice for a formal group law of height $n$ is the Honda formal
group law $\H_n$ over $k$, whose $p$-series is given by
$$[p]_{\Gamma_n}(x) = x^{p^n}.$$
By Dieudonn\'e theory, one can show that the profinite group $\SS_n$ of
automorphisms of $\H_n$ over $k$ is given by the units in the maximal order
$\co_n$ of the central division algebra of Hasse invariant $1/n$ over $\QQ_p$.
Explicitly,
$$\SS_n \cong \left(W(k)\langle S\rangle/(Sx = \phi(x) S, S^n =
p)\right)^\times,$$
where $\phi$ is a lift of Frobenius to $W(k)$ and $x\in W(k)$. As $\H_n$ is
defined over $\FF_{p^n}$, we can construct the semidirect product $\SS_n\rtimes
\Gal(k/\FF_{p^n})$; we will call this the Morava stabilizer group, and denote
it by $\Gamma$.

For $N\geq 1$, we have normal subgroups $1+S^N\co_n$ of $\Gamma$, which are of
finite index. Moreover, we have
$$\bigcap_{N\geq 1} (1+S^N \co_n) = 1,$$
so letting these be a basis for the open neighborhoods of $1$ provides $\Gamma$
the structure of a profinite group.

Goerss-Hopkins-Miller showed that the action of $\Gamma$ on $\pi_0 E$ lifts to
an action of $\Gamma$ on the $\Eoo$-ring by $\Eoo$-maps. Choosing $\GG = \H_n$,
Lubin-Tate theory allows us to noncanonically identify
$$\pi_0 E \simeq W(k)[[u_1,\cdots,u_{n-1}]].$$
This is a complete local ring, with maximal ideal $\fr{m} = (p, u_1, \cdots,
u_{n-1})$. We remark that there are explicit, but inhumanly complicated,
formulas for the action of $\Gamma$ on the generators $u_i$.

The $\Eoo$-ring $E$ is therefore an adic $\Eoo$-ring, complete with respect to
the finitely generated ideal $(p, u_1, \cdots, u_{n-1})$. The action of the
Morava stabilizer group on $E$ is continuous in the sense that it acts via maps
of \emph{adic} $\Eoo$-rings.

\begin{theorem}[Devinatz-Hopkins]
    The \emph{continuous} homotopy fixed points $E^{h\Gamma}$ is equivalent to
    the $K(n)$-local sphere $\Lk S$.
\end{theorem}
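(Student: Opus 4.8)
The plan is to exhibit the unit map $\Lk S \to E$ as a $K(n)$-local pro-Galois extension with Galois group $\Gamma$ and deduce the statement from Galois descent; equivalently, to identify $\spf E \to \spf E/\Gamma$ as a pro-(finite \'etale) cover and apply faithfully flat descent in the $K(n)$-local setting. By definition the continuous homotopy fixed points $E^{h\Gamma}$ are computed as the totalization $\Tot$ of the cosimplicial $\Eoo$-ring $[\ell]\mapsto \Map_c(\Gamma^{\times \ell}, E)$ arising from the continuous $\Gamma$-action on $E$ (where $\Map_c$ is interpreted as the filtered colimit $\colim_U \prod_{(\Gamma/U)^{\times \ell}} E$ over open normal subgroups $U$). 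So it suffices to identify this cosimplicial object, \emph{as a diagram}, with the $K(n)$-local $E$-based Adams resolution $[\ell] \mapsto \Lk(E^{\wedge \ell+1})$ of $\Lk S$, and to show that the latter totalizes to $\Lk S$.

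First I would establish the descent statement: the augmented cosimplicial diagram $\Lk S \to \big(\Lk(E^{\wedge \bullet+1})\big)$ is a limit diagram in $\Lk\Sp$. This is the assertion that the $\Eoo$-ring map $\Lk S \to E$ is descendable in the $K(n)$-local category, and it is essentially a repackaging of the Hopkins--Ravenel smash product theorem: $E$ is a dualizable $\Lk S$-module, and it is faithful (if $X$ is $K(n)$-local and $E^\vee_\ast X = 0$, then $K(n)_\ast X = 0$ and so $X \simeq 0$), so $\Lk S \to E$ is a faithful dualizable $\Eoo$-algebra and hence descendable. I would assume the smash product theorem and invoke the general formalism of descendable morphisms rather than reprove any of it.

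Next I would identify the two cosimplicial objects. The essential algebraic input is the computation of $K(n)$-local $E$-cooperations, $\pi_\ast \Lk(E\wedge E) \cong \Map_c(\Gamma, \pi_\ast E)$, the ring of continuous functions --- this is the classical description of the Lubin--Tate/Morava stabilizer Hopf algebroid, and it says precisely that the two structure maps exhibit the associated groupoid scheme as $\spf E \times \Gamma \rightrightarrows \spf E$, i.e.\ that $\spf E\times_{\spf E/\Gamma}\spf E \simeq \spf E\times \Gamma$ in derived formal schemes. Iterating, the Amitsur complex $[\ell]\mapsto \Lk(E^{\wedge \ell+1})$ becomes $[\ell]\mapsto \Map_c(\Gamma^{\times \ell}, E)$ compatibly with all coface and codegeneracy maps, since both are the \v{C}ech nerve of $\spf E \to \spf E/\Gamma$ read off from a $\Gamma$-torsor. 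Passing to totalizations and combining with the previous step gives $\Lk S \simeq \Tot\big(\Map_c(\Gamma^{\times\bullet}, E)\big) = E^{h\Gamma}$, as desired. (An alternative, closer to Devinatz--Hopkins's original argument, is to build $E^{hG}$ by hand for open $G$, extend to closed subgroups by a filtered colimit, and then identify the resulting descent spectral sequence with the $K(n)$-local Adams--Novikov spectral sequence converging to $\pi_\ast \Lk S$; this uses the same two ingredients, organized differently.)

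The main obstacle is genuinely the first step: proving convergence of the $K(n)$-local $E$-Adams tower is the hard input (the smash product theorem), which I would not attempt to reprove. The secondary difficulty is foundational --- pinning down the ``continuous'' homotopy fixed points so that $E^{h\Gamma} = \Tot(\Map_c(\Gamma^{\times\bullet},E))$ is literally the cosimplicial totalization appearing in descent, and checking the identification of the two cosimplicial objects as a map of diagrams and not merely degreewise; this forces one to keep track of the profinite topology on $\Gamma$ throughout and to verify that the colimit-over-open-subgroups description of $\Map_c$ is compatible with smash products and with $K(n)$-localization.
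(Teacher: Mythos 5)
The paper does not actually prove this statement: it is imported from Devinatz--Hopkins as a black box, and the only thing the paper does with it is unwind the definition of continuous fixed points into the totalization $\Lk S \simeq \Tot(E \rightrightarrows E\widehat{\wedge}E\cdots)$, which is precisely the cosimplicial object your argument is organized around. So there is no in-paper proof to compare against. Your outline is the standard modern descent-theoretic account of the theorem, and it is sound as a plan, with the two genuinely hard inputs --- the Hopkins--Ravenel smash product theorem and the computation $\pi_\ast\Lk(E\wedge E)\cong\Map_c(\Gamma,\pi_\ast E)$ --- correctly identified and explicitly assumed rather than reproved.

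One caveat on the logic as written: the clause ``$\Lk S\to E$ is a faithful dualizable $\Eoo$-algebra and hence descendable'' is not a formal implication. Faithfulness plus dualizability yields descendability for \emph{finite} Galois extensions, but $\Gamma$ is profinite, and for pro-Galois extensions faithfulness alone does not place the unit in the thick tensor ideal generated by $E$; the horizontal vanishing line coming from the smash product theorem is genuinely needed and is not a consequence of faithfulness. Since you say you would assume the smash product theorem anyway, the argument closes, but the sentence suggests a formal deduction that does not exist. The other point you flag --- that ``continuous homotopy fixed points'' must be \emph{defined} so that $E^{h\Gamma}=\Tot(\Map_c(\Gamma^{\times\bullet},E))$ --- is in fact where most of the content of the original Devinatz--Hopkins paper lives (they construct $E^{hG}$ for closed subgroups $G$ by hand and verify its compatibilities); in the framing adopted here that identity is essentially taken as the definition, which is why the paper can state the theorem as a single displayed totalization.
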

Working through the definition of the homotopy fixed points, this is saying
that
$$ \Lk S \simeq \Tot\left(\xymatrix{E \ar@<0.5ex>[r]\ar@<-0.5ex>[r] &
E\widehat{\wedge} E \ar@<-0.75ex>[r]\ar[r]\ar@<0.75ex>[r] & \cdots}\right) $$

As $\Gamma$ acts continuously on $E$, we can form the quotient stack $\spf E/G$
for any finite subgroup $G\subsetneq \Gamma$. However, we cannot immediately
define the quotient stack $\spf E/\Gamma$ in the same manner as above; instead,
inspired by the above result of Devinatz-Hopkins, we make the following
definition.
\begin{definition}
    The derived Lubin-Tate stack $\dX$ is defined to be the semisimplicial
    stack $\spf E/\Gamma$, described via the semisimplicial diagram
    $$\xymatrix{\spf E & \ar@<0.5ex>[l]\ar@<-0.5ex>[l] \spf(E\widehat{\wedge}
    E) & \ar@<-0.75ex>[l]\ar[l]\ar@<0.75ex>[l] \cdots}$$
\end{definition}

The following result is the analogue of the identification
$$\QCoh(\spf E/G) \simeq \Mod(E)^{\wedge, G}_\frakm \simeq (\Lk
\Mod(E))^{hG},$$
where $\frakm = (p, u_1, \cdots, u_{n-1})$ and $G$ is a finite subgroup of
$\Gamma$.

\begin{lemma}\label{qcoh}
There are symmetric monoidal equivalences
$$\QCoh(\spf E) \simeq \Lk \Mod(E),\quad \QCoh(\dX) \simeq \Lk \Sp.$$
\end{lemma}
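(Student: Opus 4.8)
The plan is to establish the two equivalences in turn, with the first feeding into the second.

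\emph{The equivalence $\QCoh(\spf E)\simeq\Lk\Mod(E)$.} Recall from the discussion preceding Proposition~\ref{picinv} that for an adic $\Eoo$-ring $R$ with finitely generated ideal of definition $I$ one has a symmetric monoidal equivalence $\QCoh(\spf R)\simeq\Mod(R)^\wedge_I$, the monoidal structure on the right being the $I$-completed relative smash product over $R$. Applying this to $R=E$ with $I=\frakm=(p,u_1,\dots,u_{n-1})$ reduces the first equivalence to the statement that, for $E$-modules, $\frakm$-adic completion coincides with $K(n)$-localization. This is a theorem of Hovey--Strickland: the point is that $K(n)$ is built out of $E/\frakm$ after inverting the periodicity class $\beta$, so that $\Lk$ and $(-)^\wedge_\frakm$ have the same local objects inside $\Mod(E)$; since that common class of local objects is a $\otimes$-ideal, the equivalence is symmetric monoidal.

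\emph{The equivalence $\QCoh(\dX)\simeq\Lk\Sp$.} By the definition of $\QCoh$ of a semisimplicial object and the presentation of $\dX=\spf E/\Gamma$ recalled above, we have
$$\QCoh(\dX)\;\simeq\;\Tot\Big([k]\mapsto\QCoh\big(\spf E^{\widehat{\wedge}(k+1)}\big)\Big).$$
Each $E^{\widehat{\wedge}(k+1)}$ is again an even periodic adic $\Eoo$-ring with finitely generated ideal of definition (generated by the image of $\frakm$), and it is an $E$-algebra, so the argument of the first paragraph applies levelwise and identifies the right-hand side with $\Tot\big([k]\mapsto\Lk\Mod(E^{\widehat{\wedge}(k+1)})\big)$. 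Writing $\Lk\Mod(E^{\widehat{\wedge}(k+1)})=\Mod_{E^{\widehat{\wedge}(k+1)}}(\Lk\Sp)$, this is precisely the diagram of module categories over the cobar complex ($=$ Amitsur complex of $\Lk S\to E$) in $\Lk\Sp$. Its totalization at the level of $\Eoo$-rings is $\Lk S$ by the theorem of Devinatz--Hopkins; and $\Lk S\to E$ is descendable in $\Lk\Sp$ --- the extension $E/\Lk S$ is profinite Galois with group $\Gamma$, and $\Gamma$ is virtually of finite $p$-cohomological dimension, which forces $\Lk S$ to lie in the thick $\otimes$-ideal generated by $E$ --- so Lurie's $\infty$-categorical Barr--Beck/descent theorem yields $\Tot\big([k]\mapsto\Mod_{E^{\widehat{\wedge}(k+1)}}(\Lk\Sp)\big)\simeq\Mod_{\Lk S}(\Lk\Sp)=\Lk\Sp$. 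Every functor in sight is symmetric monoidal, so the composite equivalence is as well.

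The main obstacle is exactly this last descent step: one needs the Devinatz--Hopkins identity $\Lk S\simeq E^{h\Gamma}$ to hold \emph{with module categories}, i.e.\ one must know that the totalization of the module categories over the cobar complex recovers $\Lk\Sp$ and nothing larger. Soft arguments (effectivity of descent, Barr--Beck) deliver this, but only because $\Lk S\to E$ is genuinely descendable --- and since $\Lk\Sp$ is not compactly generated in the usual way and its unit is not dualizable, descendability is not a formality; it has to be extracted from the finiteness of the (virtual) cohomological dimension of $\Gamma$, equivalently from the fact that $\Lk S$ can be built from $E$ in finitely many steps. A more hands-on alternative with the same outcome is to check directly that pullback $f^\ast\colon\Lk\Sp\to\QCoh(\dX)$ along the structure map $f\colon\dX\to\spec S$ is fully faithful and essentially surjective, where full faithfulness reduces by adjunction to the convergence of the $K(n)$-local $E$-Adams spectral sequence (again Devinatz--Hopkins) and essential surjectivity is again effectivity of descent.
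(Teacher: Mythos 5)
Your proposal follows the same route as the paper: the first equivalence is reduced to the identification of $\frakm$-complete $E$-modules with $K(n)$-local ones (the paper cites the $v_n$-periodicity of $E$ together with \cite[Corollary 7.3.3.3]{SAG} where you cite Hovey--Strickland, but the content is the same), and the second equivalence is obtained by totalizing $\QCoh$ over the semisimplicial presentation of $\spf E/\Gamma$ and recognizing the result as descent along $\Lk S \to E$. The one substantive difference is that the paper simply asserts the equivalence $\Lk\Sp\simeq\Tot\bigl(\QCoh(\spf E)\rightrightarrows\QCoh(\spf E\widehat{\wedge}E)\cdots\bigr)$ as ``a formal consequence of descent,'' whereas you correctly isolate this as the non-formal step and justify it: the Devinatz--Hopkins equivalence $\Lk S\simeq E^{h\Gamma}$ identifies the totalization of the cosimplicial $\Eoo$-rings, but upgrading this to an equivalence of module categories requires knowing that $\Lk S\to E$ is descendable (equivalently, that $\Lk S$ lies in the thick $\otimes$-ideal generated by $E$ in $\Lk\Sp$), which rests on the finite virtual cohomological dimension of $\Gamma$ and the $E$-nilpotence results of Hovey--Strickland. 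Your version is therefore a more careful rendering of the same argument; it buys an honest accounting of where the finiteness of $\Gamma$ enters, which the paper's phrasing obscures.
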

\begin{proof}
    To prove the first statement, it suffices to prove that an $E$-module is
    $\frakm$-complete if and only if it is $K(n)$-local. This follows from
    \cite[Chapter 6, Proposition 4.1]{tmf}. As $v_n = u^{(p^n-1)}$, we can
    invert $u$ in a $K(n)$-local $E$-module; the statement that $K(n)$-local is
    equivalent to $\fr{m}$-complete then follows from \cite[Corollary
    7.3.3.3]{SAG}.
    
    The second equivalence is a formal consequence of descent. Indeed, we have
    an equivalence:
    $$ \Lk \Sp\simeq \mathrm{Tot}\left(\xymatrix{ \QCoh(\spf E)
    \ar@<0.5ex>[r]\ar@<-0.5ex>[r] & \QCoh(\spf E\widehat{\wedge}
    E)\ar@<-0.75ex>[r]\ar[r]\ar@<0.75ex>[r] & \cdots}\right) $$
    Since $\spf E \to \dX$ is a $\Gamma$-Galois \'etale cover and
    $\spf(E\widehat{\wedge} E) \simeq \spf E\times_{\dX} \spf E$, it follows
    that the cosimplicial diagram is the cobar construction for homotopy fixed
    points. Altogether, this means that
    $$\QCoh(\dX) \simeq \QCoh(\spf E)^{h\Gamma},$$
    giving the desired equivalence $\QCoh(\dX) \simeq \Lk \Sp$.
\end{proof}
Note that the map $f^\ast:\QCoh(\spec S) \simeq \Sp \to \QCoh(\dX)$ induced by
the structure map $f:\dX\to \spec S$ is exactly $K(n)$-localization. It is
important to remark here that the na\"ive guess that $\dX$ is $\spf L_{K(n)} S$
is not correct. For instance, let $L_{K(1)} S$ denote the $K(1)$-local sphere,
with the $p$-adic topology. By \cite[Corollary 8.2.4.15]{SAG}, we know that
$$\QCoh(\spf L_{K(1)} S)\simeq \Mod(L_{K(1)} S)^\wedge_p;$$
but this is not equivalent to $L_{K(1)} \Sp\simeq \QCoh(\dX)$.

Vector bundles on $\spf E/G$ when $G$ is a finite subgroup of
$\Gamma$ are ``easy''. Suppose $X = \spf E$; then every vector bundle is a
perfect $E$-module. Our goal in this section is to study vector bundles over
the quotient stack $\spf E/G$ for $G\subseteq \Gamma$ a finite subgroup. This
is equivalent to studying the $\infty$-category of perfect $E$-modules with a
$G$-action.
\begin{prop}\label{thicksubcat}
    The $\infty$-category of vector bundles on $\spf E/G$ is generated by $E[G]
    = E\wedge \Sigma^\infty_+ G$ as a thick subcategory.
\end{prop}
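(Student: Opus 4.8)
The plan is to exploit the fact that $\spf E/G$ is a quotient of the affine stack $\spf E$ by a finite group, so that $\QCoh(\spf E/G) \simeq \Lk\Mod(E)^{hG}$ carries an adjunction between restriction and the norm (or induction) functor relating $G$-equivariant and nonequivariant modules. Concretely, let $i^\ast:\QCoh(\spf E/G)\to \QCoh(\spf E) = \Lk\Mod(E)$ be the pullback along the cover $\spf E\to \spf E/G$, and let $i_!$ be its left adjoint. Since $G$ is finite, $i_!i^\ast$ is given by tensoring with $E[G]$, and $i_! E = E[G]$ viewed as an object of $\QCoh(\spf E/G)$. The first step is to record that $E[G]$ is a vector bundle on $\spf E/G$: its pullback $i^\ast E[G] \simeq \bigoplus_{g\in G} E$ is a free $E$-module of finite rank, so it satisfies Definition \ref{vb}. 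Thus it suffices to show that every vector bundle $\cf$ on $\spf E/G$ lies in the thick subcategory generated by $E[G]$.

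The key step is a descent/dévissage argument. Given a vector bundle $\cf$ on $\spf E/G$, its pullback $M = i^\ast\cf$ is a perfect $E$-module (indeed a projective $E$-module of finite rank, as remarked in the text), and hence $M$ lies in the thick subcategory of $\Lk\Mod(E)$ generated by $E$. Applying $i_!$, the object $i_! M = i_!i^\ast \cf \simeq \cf\otimes E[G]$ lies in the thick subcategory of $\QCoh(\spf E/G)$ generated by $i_! E = E[G]$. So it remains to see that $\cf$ itself is built from $\cf\otimes E[G]$ by finitely many cofiber sequences and retracts. For this I would use the standard fact that, for a finite group $G$, the unit $\sO\to E[G]$ (the map dual to the augmentation, or better: the transfer/restriction composite) exhibits $\sO_{\spf E/G}$ as a retract of $E[G]$ after inverting $|G|$ — but since we are working $K(n)$-locally and $|G|$ need not be invertible, the cleaner route is to use the cofiber sequence coming from the regular representation: there is a cofiltration of $E[G]$ (as a $G$-$E$-module) whose associated graded pieces are shifts of $E$ with its various $G$-actions, but more usefully, $\sO$ sits in a finite tower built from $E[G]$. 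Precisely, the $G$-Tate construction and the norm cofiber sequence give that $\sO_{\spf E/G}$ is a finite extension of objects of the form $i_!(\text{shifts of }E)$; alternatively one invokes that the thick subcategory generated by $E[G]$ in $\Lk\Mod(E)^{hG}$ contains the unit because $E^{hG}$-module theory is ``induction-complete'' — this is precisely the statement that the functor $\QCoh(\spf E/G)\to \Lk\Mod(E)$ is conservative and the right adjoint $i_\ast = i_!$ (ambidexterity, using that $G$ is finite and $\Lk\Mod(E)$ is $K(n)$-local so that finite groups are ``$\infty$-good'' in the sense of Hopkins–Lurie) detects the unit in the thick subcategory generated by $E[G]$.

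The main obstacle I anticipate is the last point: showing that the unit $\sO_{\spf E/G}$ lies in the thick subcategory generated by $E[G]$, rather than merely that $\cf\otimes E[G]$ does. The honest way to handle this is via ambidexterity in the $K(n)$-local category: since $G$ is finite and we are in $\Lk\Sp$, the functor $i^\ast$ has a left adjoint $i_!$ and a right adjoint $i_\ast$, and these agree ($i_!\simeq i_\ast$), so $i^\ast$ is \emph{comonadic} as well as monadic; the comonad is $i^\ast i_\ast \simeq (-)\otimes E[G]$, and descent along a comonad whose cofree objects are generated by $E[G]$ shows that $\QCoh(\spf E/G)$ is generated (as a thick — indeed localizing — subcategory) by $E[G]$. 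Restricting to compact/dualizable objects then gives the statement for vector bundles. I would phrase the write-up around this ambidexterity input, citing the $K(n)$-local ambidexterity of Hopkins–Lurie (or Greenlees–Sadofsky) for the equivalence $i_!\simeq i_\ast$, after which the thick-generation claim is formal from the fact that $E$ generates $\Lk\Mod(E)$ as a thick subcategory (every vector bundle on $\spf E$ being a perfect, indeed finite free, $E$-module).
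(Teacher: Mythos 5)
Your route is genuinely different from the paper's: the paper works directly with perfect $E$-modules with $G$-action, producing an explicit presentation of $M$ by finite wedges of (shifts of) $E[G]$ using that $\pi_0 E$ is local, and separately gives the transfer-idempotent retraction $M \to M\widehat{\wedge}_E E[G] \to M$ when $p\nmid \#G$. Your descent strategy can be made to work, but as written it has a gap at exactly the point you yourself flag, and the patch you propose does not close it. Ambidexterity gives you $i_!\simeq i_\ast$ and the (co)monadicity of $i^\ast$, but monadic descent only exhibits $\sO_{\spf E/G}$ as the limit of the \emph{full} cobar cosimplicial object on $E[G]$, and an infinite totalization is not a thick-subcategory operation. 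What you actually need is that $E^{hG}\to E$ is \emph{descendable}: $\sO_{\spf E/G}$ must be a retract of a finite partial totalization $\Tot_m$ of the cobar construction. That is a nilpotence statement, not a formal consequence of adjunctions; it holds here because $E^{hG}\to E$ is a faithful $K(n)$-local $G$-Galois extension (equivalently because $E^{tG}\simeq 0$, by Greenlees--Sadofsky), and faithful finite Galois extensions are descendable (Mathew). Once that input is in place your argument does close up: the projection formula gives $E[G]\otimes_{\sO}E[G]\simeq\bigoplus_{G}E[G]$, so every term of the cobar construction, hence every finite limit $\Tot_m$, lies in the thick subcategory generated by $E[G]$; and for a vector bundle $\cf$ one tensors the retraction with $\cf$, which stays in that thick subcategory because $i^\ast\cf$ is a finite free $E$-module ($\pi_0 E$ being local), so each $\cf\otimes E[G]^{\otimes k}$ is a finite wedge of shifts of $E[G]$.

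A second, independent problem is the closing move ``restricting to compact/dualizable objects then gives the statement.'' Knowing that $E[G]$ generates $\QCoh(\spf E/G)$ as a localizing subcategory does not imply that it generates the dualizable objects as a thick subcategory: in $K(n)$-local module categories the dualizable objects are not the compact objects (already $\Lk S$ is dualizable but not compact in $\Lk\Sp$), so the Neeman--Thomason mechanism by which compact generators thickly generate all compacts is unavailable. You must therefore run the finite descendability argument above rather than pass through localizing generation. (A minor point: the monad on $\QCoh(\spf E/G)$ is $i_\ast i^\ast\simeq(-)\otimes E[G]$; the composite $i^\ast i_\ast$ is an endofunctor of $\QCoh(\spf E)$.) With these two repairs --- cite descendability of the $K(n)$-local Galois extension, and replace the localizing/compactness step by the finite $\Tot_m$ retraction tensored with $\cf$ --- your proof is correct, and it is more uniform than the paper's in that it does not require any claim about the structure of $\pi_\ast M$ as a $\pi_\ast E[G]$-module.
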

\begin{proof}
    Let $M$ be a perfect $E$-module with a $G$-action. Since $\pi_0 E$ is a
    local ring, $\pi_\ast M$ is a (finitely generated) free $\pi_\ast
    E[G]$-module. Let $x_1,\cdots,x_m$ be a basis for $\pi_\ast M$ over
    $\pi_\ast E[G]$; this begets a map
    $$f:E[G]^{\vee k}\vee \Sigma E[G]^{\vee n} \to M,$$
    which is a surjection on homotopy. The fiber of $f$ is also a free
    $E[G]$-module $E[G]^{\vee i}\vee \Sigma E[G]^{\vee j}$. Therefore, if $K$
    is the cofiber of
    $$E[G]^{\vee i}\to \Sigma E[G]^{\vee k}$$
    and $L$ is the cofiber of
    $$\Sigma E[G]^{\vee j}\to \Sigma E[G]^{\vee n},$$
    we have a splitting of $M$ as $K \vee \Sigma
    L$.

    We provide an alternative proof in the case that $p\not|\#G$. Let $M$ be
    any perfect $E$-module with a $G$-action. We claim that $M$ is a retract of
    $M\widehat{\wedge}_E E[G]$.  Indeed, we have maps $\pi_1:E[G]\to E$ (coming
    from $G\to \ast$) and $\pi_2:E\to E[G]$ (coming from the basepoint).
    Moreover, our assumptions imply that $\# G$ is invertible in $(\pi_0
    E)^\times\supseteq \Z_p^\times$, so $\frac{1}{\#G}\pi_2\pi_1$ gives an
    idempotent map from $E[G]$ to itself.  The image is $E$, which establishes
    that $E$ is a retract of $E[G]$, and hence the claim. To finish the proof
    of the proposition, we note that $M\widehat{\wedge}_E E[G]$ is in the thick
    subcategory generated by $E[G]$; since $M$ is a retract, the desired result
    follows.
\end{proof}
One can ask for more satisfying descriptions along the lines of the following
result of Bousfield's.
\begin{theorem}[Bousfield]
    Every vector bundle over $\spf K_2/C_2$ is a direct sum of suspensions of
    $KO_2$, $K_2$, and $KT = K_2^{h\Z}$.
\end{theorem}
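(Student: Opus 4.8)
The plan is to reduce the classification to a representation-theoretic computation over the coefficient ring $\pi_\ast K_2$, using the fact that (by Proposition \ref{thicksubcat} and its proof) a vector bundle over $\spf K_2/C_2$ is the same data as a perfect $K_2$-module $M$ with a $C_2$-action, and that $\pi_\ast M$ is automatically a finitely generated free module over $\pi_\ast K_2$ since $\pi_0 K_2$ is local. First I would observe that, because $2$ is invertible in $\pi_0 K_2$ (Morava $K$-theory at height $2$ and an odd prime has $\Z_p \subseteq \pi_0 K_2$ — or, if one works $2$-adically, one must instead argue as below via faithful flatness), the group ring $\pi_\ast K_2[C_2]$ is \'etale over $\pi_\ast K_2$. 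In the case $p$ is odd, $\pi_\ast K_2[C_2] \simeq \pi_\ast K_2 \times \pi_\ast K_2$ splits as a product of two copies corresponding to the two characters of $C_2$, and so every finitely generated projective $\pi_\ast K_2[C_2]$-module is a sum of the two simple ones; this is exactly the statement that every such $M$ splits, on homotopy, as $\pi_\ast$ of a wedge of suspensions of $K_2$ (trivial action) and of $K_2$ with the sign action, the latter being the summand $KT = K_2^{h\Z}$ in disguise — wait, more precisely $KO_2 = K_2^{hC_2}$ and $KT = K_2^{h\Z}$ are the relevant building blocks, so I would need to identify which $K_2$-modules-with-$C_2$-action realize these.

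The key steps, in order. (1) Reduce to studying $\Mod_{K_2}^{BC_2}$ of perfect modules, via the equivalence with vector bundles on $\spf K_2/C_2$ established above; since $\pi_0 K_2$ is local and $K_2$ even periodic, $\pi_\ast$ of any such module is free of finite rank over $\pi_\ast K_2$, and with its $C_2$-action it is a finitely generated $\pi_\ast K_2[C_2]$-module. (2) Show $\pi_\ast M$ is in fact \emph{projective} over $\pi_\ast K_2[C_2]$: this uses that $\#C_2 = 2$ is invertible in $\pi_0 K_2$ (at odd $p$) so that $K_2[C_2]$-modules are all retracts of free ones by the averaging idempotent argument already used in the proof of Proposition \ref{thicksubcat}; hence $K_2[C_2]$ is a (derived) \'etale, even a semisimple-on-$\pi_0$, algebra. (3) Classify finitely generated projective $\pi_\ast K_2[C_2]$-modules: the splitting $\pi_0 K_2[C_2] \cong \pi_0 K_2 \times \pi_0 K_2$ (idempotents $\tfrac{1\pm\sigma}{2}$) plus the fact that $\pi_0 K_2$ is local forces every such module to be a sum of copies of the two rank-one idempotent summands and their suspensions. (4) Lift the algebraic splitting to a splitting of $M$ as a $K_2$-module with $C_2$-action: since $K_2[C_2] \simeq K_2 \times (K_2, \text{sign})$ as an $\Eoo$- (or at least $A_\infty$-) $K_2$-algebra with $C_2$-action, idempotents lift, so $M$ decomposes as a wedge of suspensions of $K_2$ (trivial action) and $\widetilde{K_2}$ (sign action). (5) Identify the fixed points: $(\Sigma^j K_2)^{hC_2}$ recovers a suspension of $KO_2$ when the action is trivial... no — here I must be careful: the summands $K_2$, $KO_2$, $KT$ in the statement are vector bundles on the \emph{quotient} stack, i.e. objects of $\QCoh(\spf K_2/C_2)$, not $K_2$-modules. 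So the correct final step is: a vector bundle on $\spf K_2/C_2$ corresponds to a $C_2$-equivariant perfect $K_2$-module, and pushing forward along $\spf K_2 \to \spf K_2/C_2$, the generators $K_2[C_2] = $ free rank one, $K_2$ with trivial action, and $K_2$ with sign action push down to (suspensions of) $K_2$, $KO_2$, and $KT$ respectively — matching $KO_2 = E^{hC_2}$, $KT = K_2^{h\Z}$. Then the algebraic splitting of step (4) gives the direct-sum decomposition asserted.

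The main obstacle will be step (4)–(5): lifting the idempotent decomposition of $\pi_\ast K_2[C_2]$ to an honest splitting of the $C_2$-equivariant $K_2$-module $M$, and then correctly matching the resulting equivariant summands with the named spectra $KO_2$, $K_2$, $KT$ under the equivalence $\QCoh(\spf K_2/C_2) \simeq (\Lk \Mod K_2)^{hC_2}$. The idempotent lifting is clean \emph{if} one knows the equivalence $\End_{K_2}(K_2[C_2])$ splits compatibly with the $C_2$-action as a product of matrix algebras over $K_2$ — this is where evenness and local-ness of $\pi_0 K_2$ do the real work, since obstruction groups to splitting live in odd-degree or higher $\Ext$ groups that vanish. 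I would also flag that the literal statement presumably presumes $p = 2$ (to make $C_2 \subsetneq \Gamma_2$ a subgroup of the stabilizer at height $2$), in which case $2$ is \emph{not} invertible and one instead proves the splitting by noting $K_2[C_2]$ is a free $K_2$-module of rank $2$ with $\pi_0 K_2[C_2]$ a complete local ring extension, and argues directly via Bousfield's original $K(2)$-local computation; this is genuinely harder and I would cite Bousfield's work rather than reprove it, treating the theorem as a known input used for illustration.
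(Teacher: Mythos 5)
The paper does not actually prove this statement: it is quoted as a theorem of Bousfield's and used purely as an illustration of what a ``satisfying'' description of vector bundles on $\spf E/G$ could look like. So the only point where your write-up and the paper agree is your closing remark that one should cite Bousfield rather than reprove the result.

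The proof you sketch, however, starts from a misreading and --- even after correcting it --- cannot yield the stated answer. Here $K_2$ is not Morava $K$-theory at height $2$: it is $2$-complete complex $K$-theory, i.e.\ height-one Morava $E$-theory at the prime $2$, with $C_2 = \{\pm 1\}\subset \Z_2^\times = \Gamma$ acting by complex conjugation, so that $KO_2 = K_2^{hC_2}$. In particular $2$ is never invertible in $\pi_0 K_2$, the averaging idempotents $\tfrac{1\pm\sigma}{2}$ do not exist, and $\pi_\ast K_2[C_2]$ is not semisimple. This is not a technicality you can route around: your steps (2)--(4) would force every indecomposable vector bundle on $\spf K_2/C_2$ to come from a rank-one $C_2$-equivariant $K_2$-module or from the free module $K_2[C_2]$, which after descent produces only suspensions of $KO_2$ (the rank-one twists are all of the form $\Sigma^{2j}K_2$, since conjugation negates the Bott class) and of $K_2$ itself --- two families, not three. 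The third summand $KT = K_2^{h\Z}$, the fiber of $\psi^{-1}-1$ on $K_2$ (self-conjugate $K$-theory), corresponds to an indecomposable $C_2$-equivariant $K_2$-module that is a \emph{non-split} extension, which can exist only because $2$ is not invertible; it is exactly the object your semisimplicity argument rules out. The real content of Bousfield's theorem is the classification of indecomposables in this non-semisimple situation together with the realization statements, and that requires his $K$-local machinery. Your final instinct --- treat the theorem as an external input and cite Bousfield --- is correct and matches what the paper does; the preceding five steps should be discarded or explicitly confined to the odd-primary setting, where they prove a different (and much easier) statement with only two basic summands.
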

\begin{remark}
    There are two avenues for generalization.
    \begin{enumerate}
	\item One can attempt to describe all vector bundles over $\spf
	    E_{p-1}/C_p$. At odd primes, there are a lot more indecomposable
	    representations. Nonetheless, a partial generalization of
	    Bousfield's result is the subject of ongoing work by Hood Chatham.
	\item One can attempt to prove Bousfield's result in the equivariant
	    setting. In \cite{wood}, we describe a genuine $G$-equivariant
	    generalization of this result for finite abelian groups $G$.
    \end{enumerate}
\end{remark}
\section{Picard groups and Anderson duality}
We now turn our attention to understanding Picard groups.
\subsection{The $K(n)$-local Picard group}
\begin{lemma}\label{picard}
    There is an isomorphism
    $$\pi_0\pic \Sp\simeq \Z\simeq \langle S^1\rangle.$$
\end{lemma}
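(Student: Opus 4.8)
The plan is to show that the only invertible spectra (up to equivalence) are the spheres $S^n$, $n \in \Z$, and that these are pairwise inequivalent, so that $\pi_0 \pic \Sp \cong \Z$ generated by $S^1$.

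First I would recall that the sphere spectrum $S$ is a connective $\Eoo$-ring with $\pi_0 S = \Z$, a local ring (indeed a field after reduction, $\pi_0 S/\mathrm{rad} = \Q$... more usefully, $\Z$ is local in the sense that it has no nontrivial idempotents). The key structural input is that an invertible object $M$ in $\Sp$ must be dualizable, hence (since $S$ is connective and Noetherian) perfect, i.e. a finite spectrum. A perfect $S$-module $M$ has $\pi_\ast M$ finitely generated over $\pi_\ast S$, and in particular $\pi_\ast M$ is bounded below and bounded above... no, finite spectra need not have bounded-above homotopy, but they do have finitely generated homotopy groups and are bounded below. The cleanest route: if $M \otimes_S N \simeq S$ then passing to $\F_p$-homology, $H_\ast(M;\F_p) \otimes_{\F_p} H_\ast(N;\F_p) \cong H_\ast(S;\F_p) = \F_p$ concentrated in degree $0$. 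Since these are graded $\F_p$-vector spaces, a tensor product of two graded vector spaces being one-dimensional in degree $0$ forces each factor to be one-dimensional, concentrated in a single degree: say $H_\ast(M;\F_p) = \F_p$ in degree $a_p$ and $H_\ast(N;\F_p) = \F_p$ in degree $-a_p$. Doing this for $\F_2$, the Hurewicz/connectivity argument shows $M$ is $(a_2 - 1)$-connected with $\pi_{a_2} M$ having a $\Z/2$ or $\Z$ summand; I would then run the standard argument (as in Hopkins–Mahowald–Sadofsky, or Mathew–Stojanoska \cite[\S2]{mathew-stojanoska}) that a spectrum whose mod-$p$ homology is $\F_p$ in a single degree $a$ for every prime $p$, with the degrees all equal to a common integer $a$, is equivalent to $S^a$. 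The equality of the degrees $a_p$ across primes follows because rationally $M \otimes \Q$ is invertible in $\Sp_\Q$, forcing $H_\ast(M;\Q) = \Q$ in a single degree $a$, and this $a$ must match $a_p$ for each $p$ by comparing ranks/dimensions via the universal coefficient/Bockstein picture (a nontrivial torsion summand in $\pi_{a_p} M$ would contribute mod-$p$ homology in two adjacent degrees, contradicting one-dimensionality).

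Next I would assemble this: $M$ connective-up-to-shift with $H_\ast(M;\Z)$ concentrated in a single degree $a$ and equal to $\Z$ there. Replacing $M$ by $\Sigma^{-a} M$, we have $H_\ast(M;\Z) = \Z$ in degree $0$, so the Hurewicz map gives $\pi_0 M \cong \Z$ and the map $S \to M$ realizing a generator induces an isomorphism on integral homology, hence (both being connective, or finite) is an equivalence by the homology Whitehead theorem. Therefore $M \simeq S^a$. Conversely $S^a$ is invertible with inverse $S^{-a}$, and $S^a \simeq S^b$ implies $a = b$ by looking at the degree of nonvanishing homotopy. Hence $\pi_0 \pic \Sp \cong \Z$, generated by the class of $S^1$, which is the claim.

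The main obstacle I anticipate is the step forcing the single-degree concentration of $H_\ast(M;\F_p)$ to have the \emph{same} degree across all primes and to rule out torsion — i.e. getting from "$M$ is dualizable" to "$M \otimes \F_p$ is a single-degree one-dimensional $\F_p$-module for all $p$, with matching degrees." This is where one genuinely uses invertibility (not just dualizability with a finite-dimensional dual) together with the nilpotence/thick-subcategory technology, or more elementarily the rational comparison plus Bockstein spectral sequence. Everything after that — the Hurewicz and homology Whitehead arguments — is routine. I would cite \cite[Proposition 2.2 and \S2]{mathew-stojanoska} or \cite{hmsref} for the underlying computation rather than reproducing it, since this Picard computation for $\Sp$ is classical.
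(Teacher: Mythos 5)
Your proposal is correct and follows essentially the same route as the paper: invertibility forces $X$ to be a finite spectrum, the K\"unneth formula forces $Hk_\ast X$ to be one-dimensional and concentrated in a single degree for every field $k$, the universal coefficient/Bockstein comparison across primes and $\QQ$ rules out torsion and pins down a common degree, and Hurewicz plus the homology Whitehead theorem then identify $X$ with a sphere. The paper's version is just a terser normalization of the same argument (shifting $X$ to be connective with $\pi_0 X \neq 0$ rather than tracking the degrees $a_p$ explicitly).
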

\begin{proof}
    Let $X\in\pic\Sp$.  Then $X$ is a finite spectrum (i.e., is compact), since
    the sphere is.  We might assume that $X$ is connective with $\pi_0 X\neq
    0$.  The K\"unneth formula tells us that $Hk_\ast X$ is concentrated in
    degree $0$ for every field $k$.  It follows from the universal coefficients
    theorem that $H\Z_\ast X$ is torsion-free and concentrated in degree zero.
    Using the Hurewicz theorem, we can conclude that $X\simeq S$.
\end{proof}
We could now attempt to understand the Picard space of $\Lk \Sp$ -- or, perhaps
a simpler task, the Picard group of $\Lk \Sp$.  This category is not symmetric
monoidal under the ordinary smash product; rather, one has to consider a
completed smash product. For this, we have the following calculation due to
Hopkins-Mahowald-Sadofsky (\cite{hmsref}).
\begin{theorem}\label{piccomputation}
    At an odd prime\footnote{An analogous result is true at $p=2$; there, we
    have $$\pi_0 \pic(\Lone \Sp)\simeq \Z_2\times\Z/2\times\Z/2,$$ generated by
    the elements described below and the ``dual question mark complex''.  },
    there is an isomorphism
    $$\pi_0 \pic(\Lone \Sp)\simeq \Z_p\times\Z/|v_1|.$$
\end{theorem}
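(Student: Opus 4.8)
The plan is to compute the Picard space $\pic(\Lone \Sp) \simeq \pic(\spf E_1)^{h\Gamma_1}$ using the descent spectral sequence, exactly in the spirit of the preceding sections. Here $E_1$ is Morava $E$-theory at height $1$ (so $\pi_0 E_1 \simeq \Z_p$, with $E_1$ a form of $p$-completed $K$-theory) and $\Gamma_1 = \Gamma \simeq \Z_p^\times$ at the odd prime $p$. First I would record the local computation: by Lemma \ref{picard} applied after base change (or directly, since $\pi_0 E_1 = \Z_p$ is a complete regular local ring and Proposition \ref{picinv} applies), every line bundle on $\spf E_1$ is a shift of $E_1$, and by even periodicity the only shifts that matter are $E_1$ and $\Sigma E_1$. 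Hence $\pi_0 \pic(\spf E_1) \simeq \Z/2$, and $\pi_k \pic(\spf E_1) \simeq \pi_k \mathrm{gl}_1 E_1$ for $k \geq 1$, which by even periodicity and the fact that $\pi_0 E_1 = \Z_p$ gives $\pi_1 \pic(\spf E_1) \simeq (\pi_0 E_1)^\times = \Z_p^\times$ and $\pi_k \pic(\spf E_1) \simeq \pi_{k} E_1$ for $k \geq 2$ (so $\pi_2 = \pi_3 = 0$ after accounting for periodicity in the connective range — the relevant higher homotopy of $\pic$ agrees with that of the sphere localized appropriately and is concentrated below; I would cite \cite{mathew-stojanoska} for the identification of $\pi_k \pic$ with $\pi_k \mathrm{gl}_1$ in positive degrees).

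Next I would set up the homotopy fixed point spectral sequence for the $\Gamma_1 \simeq \Z_p^\times$-action, $H^s_c(\Z_p^\times; \pi_t \pic(\spf E_1)) \Rightarrow \pi_{t-s}\pic(\Lone \Sp)$, which is valid by the descent equivalence $\QCoh(\dX) \simeq \Lone \Sp$ of Lemma \ref{qcoh} together with the fact that $\pic(-)$ carries the relevant homotopy colimit to a homotopy limit (the discussion following Proposition \ref{picinv}). For $t = 0$ the relevant group is $H^0_c(\Z_p^\times; \Z/2) = \Z/2$. For $t = 1$ the input is $H^s_c(\Z_p^\times; \Z_p^\times)$; writing $\Z_p^\times \simeq \Z/(p-1) \times \Z_p$ (for $p$ odd), continuous cohomology gives $H^0_c = \Z_p$ (the fixed points, via the $p$-adic logarithm on the pro-$p$ part) and $H^1_c(\Z_p^\times; \Z_p^\times) \simeq \Z/(p-1) \cong \Z/|v_1|$, since $|v_1| = 2(p-1)$ and the torsion is captured by $H^1$ of the cyclic quotient acting on roots of unity. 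Assembling the $t-s = 0$ line: the $E_2$-page contributes $H^0_c(\Z_p^\times; \Z/2)$ from $t=0$, $H^1_c(\Z_p^\times; \Z_p^\times) \simeq \Z/(p-1)$ from $t=1$, and $H^2_c(\Z_p^\times; \pi_2)$ from $t=2$. The point is that at an odd prime the $\Z/2$ from $t=0$ and the contributions from the higher rows either vanish or are absorbed: the group $\Z_p \times \Z/|v_1|$ arises as the $\Z_p$ from $H^0_c(\Z_p^\times; \Z_p^\times)$ (identifying the "additive" line-bundle data via the logarithm) together with the $\Z/(p-1)$ from $H^1_c$, and one checks the $\Z/2 \subset |v_1|$-torsion subsumes the $t=0$ contribution since $2 \mid p-1$.

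The main obstacle will be the convergence and collapse argument: I must show the descent spectral sequence for $\pic$ degenerates at the relevant page along the zero-stem, so that the associated graded is exactly $\Z_p \times \Z/(p-1)$ with no further extensions or differentials. The standard device (following \cite{mathew-stojanoska}, and as used by Hopkins-Mahowald-Sadofsky in \cite{hmsref}) is a comparison: in high filtration the $\pic$ spectral sequence agrees with the $K(1)$-local $E_n$-Adams (homotopy fixed point) spectral sequence computing $\pi_\ast \Lone S$, whose structure is known — the $E_2$-page is concentrated in cohomological degrees $0$ and $1$ over $\Z_p^\times$ because $\Z_p^\times$ has cohomological dimension $1$ (being virtually $\Z_p$, for $p$ odd), which forces the spectral sequence to collapse and kills all potential differentials into and out of the zero-stem beyond $E_2$. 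The remaining work is then purely the extension problem $0 \to \Z/(p-1) \to \pi_0\pic \to \Z_p \to 0$, which splits because $\Z_p$ is free (projective) as a $\Z_p$-module, giving $\pi_0\pic(\Lone\Sp) \simeq \Z_p \times \Z/|v_1|$ as claimed; I would then identify the $\Z_p$ generator with the $K(1)$-local sphere's algebraic data and the torsion generator explicitly, and cite \cite{hmsref} for the $p=2$ variant recorded in the footnote.
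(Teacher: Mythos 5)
Your overall strategy---the descent/homotopy-fixed-point spectral sequence $\H^s_c(\Z_p^\times;\pi_t\pic(E))\Rightarrow\pi_{t-s}\pic(\Lone\Sp)$---is the same as the one the paper uses in its Example following the theorem (the paper also sketches the original Hopkins--Mahowald--Sadofsky argument via the spectra $S^{-|v_1|a}$ and the non-split extension $0\to\Z_p^\times\to\pi_0\pic\Lone\Sp\to\Z/2\to 0$). However, your execution of the spectral sequence contains two errors that lead to the wrong group. First, the group $\H^0_c(\Z_p^\times;\Z_p^\times)$ sits in position $E_2^{0,1}$, which lies on the line $t-s=1$ and contributes to $\pi_1\pic(\Lone\Sp)$, not to $\pi_0$; you cannot harvest the $\Z_p$ factor from there. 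The entire contribution from $t=1$ to the zero-stem is $E_2^{1,1}=\H^1_c(\Z_p^\times;\Z_p^\times)$, and since $\Gamma\simeq\Z_p^\times$ acts trivially on $\pi_0 E=\Z_p$ (the Adams operation $\psi^a$ acts by $a^t$ on $\pi_{2t}$), this is $\Hom_c(\Z_p^\times,\Z_p^\times)\simeq\Z_p\times\Z/(p-1)$, not $\Z/(p-1)$ as you assert. Your computation splits one group across two bidegrees and in doing so drops the $\Z_p$ from the correct spot.

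Second, the $\Z/2=E_2^{0,0}=\H^0_c(\Z_p^\times;\pi_0\pic(E))$ is \emph{not} ``subsumed'' by the torsion in $E_2^{1,1}$: the associated graded of $\pi_0\pic(\Lone\Sp)$ along the zero-stem is $\Z/2$ in filtration $0$ and $\Z_p\times\Z/(p-1)$ in filtration $1$, so the torsion subgroup has order $2(p-1)$, and the question is whether the extension $0\to\Z_p^\times\to\pi_0\pic\Lone\Sp\to\Z/2\to 0$ splits. It does not (this is a genuine input, \cite[Proposition 2.7]{hmsref}, detected by the fact that $S^1$ has order $|v_1|=2(p-1)$ rather than $2$), and the non-splitting is exactly what produces the cyclic factor $\Z/(2p-2)=\Z/|v_1|$. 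The extension you address ($0\to\Z/(p-1)\to\pi_0\pic\to\Z_p\to 0$) is not the one that arises from the filtration, and your final assembly would yield $\Z_p\times\Z/(p-1)$, which has no element of order $2(p-1)$ and is not the group in the statement. The collapse argument via $\mathrm{cd}_p(\Z_p^\times)=1$ and the vanishing of $E_2^{s,s}$ for $s\geq 2$ (since $\pi_1 E=0$ and higher continuous cohomology with $p$-adic coefficients vanishes) is fine and matches the paper's use of sparseness.
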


As this is really the only computation that is known in general, we will sketch
the proof. This relies on the following incredible theorem, again by
Hopkins-Mahowald-Sadofsky, a geometric proof of which is the goal of this
section.
\begin{theorem}[Hopkins-Mahowald-Sadofsky \cite{hmsref}]\label{hms}
    The following conditions are equivalent.
    \begin{enumerate}
	\item A $K(n)$-local spectrum $M$ is in $\pic\Lk\Sp$.
	\item $\dim_{K(n)_\ast}K(n)_\ast M = 1$.
	\item $E^\vee_\ast M$ is a free $E_\ast$-module of rank $1$.
    \end{enumerate}
\end{theorem}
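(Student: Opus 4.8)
The plan is to translate all three conditions into statements about the quasicoherent sheaf on the derived Lubin--Tate stack $\dX = \spf E/\Gamma$ attached to $M$, and then to verify them by descent along the $\Gamma$-Galois \'etale cover $f_E:\spf E\to\dX$. By Lemma~\ref{qcoh} the symmetric monoidal equivalence $\Lk\Sp\simeq\QCoh(\dX)$ carries $M$ to a sheaf $\ff$, and condition (1) is exactly the assertion that $\ff$ is an invertible object of $\QCoh(\dX)$ (which, since $\dX$ is connected, is the same as $\ff\in\pic(\dX)$). Tracing through the descent presentation, the restriction $f_E^\ast\ff$ corresponds, under $\QCoh(\spf E)\simeq\Lk\Mod(E)$, to $L_{K(n)}(E\wedge M)$, whose homotopy is by definition $E^\vee_\ast M$. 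First I would prove (1)$\iff$(3): since $\pic(-)$ carries the colimit presentation of $\dX$ to the corresponding limit (the discussion preceding Proposition~\ref{picinv}; equivalently $\pic(\dX)\simeq\pic(\spf E)^{h\Gamma}$), the sheaf $\ff$ is invertible if and only if $f_E^\ast\ff$ is an invertible object of $\QCoh(\spf E)$, the $\Gamma$-descent datum being automatic. Morava $E$-theory satisfies the hypotheses of Proposition~\ref{picinv} --- $\pi_0E\cong W(k)[[u_1,\dots,u_{n-1}]]$ is complete regular local Noetherian and $\frakm$-complete, and an $E$-module is dualizable exactly when perfect --- so by that proposition and \cite[Theorem~8.7]{baker} every invertible object of $\QCoh(\spf E)$ is a shift $\Sigma^jE$. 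Hence $f_E^\ast\ff$ is invertible iff $E^\vee_\ast M=\pi_\ast f_E^\ast\ff$ is a free $E_\ast$-module of rank one, which is (3).

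It remains to relate (3) and (2), which is purely algebraic. For (3)$\Rightarrow$(2): taking the evident $2$-periodic form $E/\frakm$ of Morava $K$-theory (a coefficient change and re-indexing that does not affect dimension counts), one has $K(n)\wedge M\simeq E/\frakm\otimes_E L_{K(n)}(E\wedge M)$, and since $E^\vee_\ast M$ is free, hence flat, the relevant K\"unneth spectral sequence collapses to give $K(n)_\ast M\cong K(n)_\ast\otimes_{E_\ast}E^\vee_\ast M$, which is one-dimensional over the graded field $K(n)_\ast$. (The easy half of (1)$\Rightarrow$(2) can also be seen directly: if $M$ is invertible then $K(n)\wedge M$ is an invertible $K(n)$-module, so $K(n)_\ast M$ is one-dimensional.)

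The direction (2)$\Rightarrow$(3) is the crux, and I expect it to be the main obstacle. The point is to upgrade the \emph{finiteness} of $K(n)_\ast M$ to finite generation of $E^\vee_\ast M$ over $E_\ast$: a $K(n)$-local spectrum with finite-dimensional Morava $K$-theory lies in the thick subcategory of $\Lk\Sp$ generated by $\Lk S$ (Hovey--Strickland, via the thick subcategory theorem), hence $L_{K(n)}(E\wedge M)$ lies in the thick subcategory of $\Lk\Mod(E)$ generated by $E$, i.e.\ is a perfect $E$-module, so $E^\vee_\ast M$ is a finitely generated $E_\ast$-module. Now $K(n)_\ast M$ is computed, up to the spectral sequence $\Tor^{E_\ast}_s(E_\ast/\frakm, E^\vee_\ast M)_t\Rightarrow\pi_\ast(E/\frakm\otimes_E L_{K(n)}(E\wedge M))$, from the Koszul complex of the regular sequence $(p,u_1,\dots,u_{n-1})$ acting on the finitely generated module $E^\vee_\ast M$; the hypothesis $\dim_{K(n)_\ast}K(n)_\ast M=1$ then forces $E^\vee_\ast M/\frakm$ to be one-dimensional and all higher $\Tor^{E_\ast}_s(E_\ast/\frakm,E^\vee_\ast M)$ to vanish, so comparison with the minimal free resolution over the regular local ring $\pi_0E$ shows $E^\vee_\ast M$ is free of rank one. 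Thus the only genuinely non-formal ingredient is the passage from finite $K(n)$-homology to perfectness of $L_{K(n)}(E\wedge M)$; everything geometric is soft descent along $\spf E\to\dX$, and the algebraic finish is Nakayama over the complete regular local ring $\pi_0E$.
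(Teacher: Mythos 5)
Your proposal is correct in outline, and its geometric half coincides with the paper's: the equivalence (1)$\iff$(3) is proved in the paper exactly as you do it, by passing through Lemma~\ref{qcoh}, using that $\pic(-)$ turns the presentation of $\dX$ into the limit $\pic(\spf E)^{h\Gamma}$, invoking Proposition~\ref{picinv} and \cite{baker} to see that invertible objects of $\QCoh(\spf E)$ are shifts of $E$, and observing that the $\Gamma$-linearization on $E\,\widehat{\wedge}\,M$ is automatic. Where you diverge is in how condition (2) is attached to the other two. The paper proves (2)$\Rightarrow$(1) directly: it forms $Z=\underline{\Map}(M,\Lk S)$ and shows the evaluation $M\wedge Z\to \Lk S$ is a $K(n)$-equivalence by testing against finite type~$n$ spectra, using that $\Lk X$ is filtered with associated graded a wedge of $K(n)$'s; it then explicitly declines to prove (2)$\iff$(3) and cites \cite{hmsref} for that. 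You instead prove (2)$\iff$(3) algebraically: Hovey--Strickland's characterization of dualizables gives perfectness of $L_{K(n)}(E\wedge M)$, and a Tor/Nakayama argument over the regular local ring $\pi_0E$ gives freeness of rank one. Both routes are legitimate; yours has the advantage of making the logical triangle self-contained without the evaluation-map argument, at the cost of the one step you correctly flag as the crux. One caution there: the collapse of the K\"unneth spectral sequence $\Tor^{E_\ast}_s(E_\ast/\frakm,E^\vee_\ast M)\Rightarrow K(n)_\ast M$ is not forced by dimension count alone; the standard argument (as in \cite[Proposition 8.4]{hs}, which the paper itself points to in a remark after the proof) first shifts so that the one-dimensional group $K(n)_\ast M$ is even, then propagates evenness to $E^\vee_\ast M$ up the $\frakm$-adic tower, and only then concludes the vanishing of higher Tor for parity reasons. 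With that reference supplied, your argument closes.
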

It is worthwhile to remark that since $E_\ast$ is a complete local ring, the
last condition is equivalent to $E^\vee_\ast M$ being an invertible
$E_\ast$-module.

Let $M(p^k)$ denote the spectrum obtained by taking the cofiber of
$S^{-1}\xar{p^k}S^{-1}$.  There are maps $M(p^k)\to M(p^{k+1})$, which, in the
limit, give a spectrum $M(p^\infty)$.  This is an invertible spectrum: it sits
in a cofiber sequence
$$S^{-1}\to p^{-1} S^{-1}\to M(p^\infty),$$
and multiplication by $p$ annihilates $K(n)$-homology for $n>0$, so that $\Lk
M(p^\infty)\simeq \Lk S$ -- this certainly has $K(n)$-homology of dimension
$1$. Since $M(p^k)$ is a finite spectrum, it is of type $k$ for some integer
$k$.  A theorem of Adams says that $k=1$.  By the periodicity theorem, we
therefore obtain a $v_1$-self map
$$v_1^{p^{k-1}}:\Sigma^{2p^{k-1}(p-1)}M(p^k)\to M(p^k).$$
We can use this map to
construct other $K(1)$-locally invertible spectra; in fact, we'll be able to
define an injection $\Z_p\to \pic(\Lone \Sp)$.

Let $a\in \Z_p$, so that $a = \sum^\infty_{k=0}\lambda_kp^k$.  Let $a_m$ denote
the truncation $\sum^m_{k=0}\lambda_kp^k$.  Define a spectrum $S^{-|v_1|a}$ by
the homotopy colimit of the diagram
$$\cdots\to \Sigma^{-|v_1|a_{k-1}}M(p^k)\to
\Sigma^{-|v_1|a_{k-1}}M(p^{k+1})\xar{v_1^{p^k\lambda_k}}\Sigma^{-|v_1|a_k}
M(p^{k+1})\to \Sigma^{-|v_1|a_k}M(p^{k+2})\to\cdots$$
If $a\in\Z\subset\Z_p$, then $\Lone S^{-|v_1|a} \simeq\Lone M(p^\infty)$, as
$\lambda_k = 0$ for $k\gg 0$.  Since $K(n)$-homology plays nicely with homotopy
colimits, we compute that
$$\dim_{K(1)_\ast}K(1)_\ast(S^{-|v_1|a}) = 1$$
for every $a\in\Z_p$.

This provides us with a continuous homomorphism $\Z_p\to \pi_0\pic\Lone \Sp$.
Hopkins-Mahowald-Sadofsky show that this is an injective homomorphism (we
will not, as this will take us too far afield), and the cosets of its image are
the ordinary spheres $S^1,\cdots,S^{|v_1|}$.  In particular, they construct a
short exact sequence
$$0\to \Z_p^\times\to \pi_0\pic\Lone\Sp \to \Z/2\to 0$$
and show that this does not split.  Since $\Z_p^\times\simeq \Z_p\times
\Z/(p-1)$, this implies that $\pi_0\pic\Lone\Sp\simeq \Z_p\times\Z/(2p-2)$. We
know that $|v_1| = 2(p-1)$, so the result follows.

\begin{proof}[Proof of Theorem \ref{hms}]
    Since $K(n)$ is a field spectrum, the implication (1) $\Rightarrow$ (2) is
    easy: if $M$ is $K(n)$-locally invertible, then there exists $M^\prime$
    such that $M\widehat{\wedge}M^\prime\simeq \Lk S$; the result follows by
    applying $K(n)$-homology and using the K\"unneth isomorphism.

    For the other direction, suppose $\dim_{K(n)_\ast}K(n)_\ast M = 1$.  Let $Z
    = \underline{\Map}(M,\Lk S)$; there is an evaluation map $M\wedge
    \underline{\Map}(M,\Lk S)\to \Lk S$. It suffices to show that this is an
    equivalence on $K(n)$-homology. Let $\cc$ be the subcategory of $\Sp$
    spanned by all spectra $X$ for which the map
    $$M\wedge \underline{\Map}(M,\Lk X)\xar{e_X} \Lk X$$
    is an equivalence on $K(n)$-homology. Any finite type $n$ spectrum $X$
    admits a finite filtration on $\Lk X$ with each cofiber a wedge of $K(n)$s.
    The category $\cc$ is closed under cofibrations and wedges, so to show that
    $e_X$ is an equivalence for any finite type $n$ spectrum, it suffices to
    observe that $e_{K(n)}$ is an equivalence on $K(n)$-homology.  Using the
    finiteness of $X$, we deduce that $e_X$ is an $K(n)$-equivalence if and
    only if $$M\wedge \underline{\Map}(M,\Lk S)\wedge X\to X\wedge \Lk S$$
    (which is the same map as $e_X$) is an equivalence. In turn, this happens
    if and only if $e_{S}$ is a $K(n)$-equivalence, as desired.

    Hopkins-Mahowald-Sadofsky prove that (2) is equivalent to (3).  We will
    instead show that (1) is equivalent to (3) using the tools from derived
    algebraic geometry developed in the previous sections. Lemma \ref{qcoh}
    shows that $\pic\Lk\Sp = \pic(\dX)$.  The Picard space satisfies
    descent\footnote{The Picard \emph{group}, however, generally does not
    satisfy any form of descent.}, and hence $\pic(\dX) \simeq \pic(\spf
    E)^{h\Gamma}$. Let $\tau:\spf E\to \dX$ denote the \'etale cover.

    Assume statement (1) of Theorem \ref{hms}, i.e., suppose $M$ is in
    $\pic\Lk\Sp$. Since $E$ satisfies the conditions appearing before
    Proposition \ref{picinv}, every invertible object of $\QCoh(\spf E)$ is of
    the form $\Sigma^k \mathcal{L}$ where $\mathcal{L}$ is a line bundle on
    $\spf E$ and $k\in\mathbf{Z}$. This means that we can assume that $\tau^\ast
    M$ is a line bundle.  It is not hard to prove that $\tau^\ast M \simeq E \
    \widehat{\wedge} \ M$. Since $\Gamma$ acts on the first factor, it follows
    that $E^\vee_\ast(M)$ is a free $E_\ast$-module of rank $1$.
    
    Now assume (3).  As a consequence of \cite{baker}, we know that $E \
    \widehat{\wedge} \ M$ is in $\pic(\spf E)$, where $M\in \QCoh(\dX)$.  It
    suffices to prove that this has a $\Gamma$-linearization.  But by
    Goerss-Hopkins-Miller $\Gamma$ acts continuously on $E \ \widehat{\wedge} \
    M$ via the first factor, and $E$ descends to the structure sheaf $\Lk S$ on
    $\dX$, so $E \ \widehat{\wedge} \ M$ has a $\Gamma$-linearization, as
    desired.
\end{proof}
\begin{remark}\label{piccyclic}
    The same argument proves that the following statements are equivalent, for
    $G$ a finite subgroup of $\Gamma$.
    \begin{itemize}
	\item An $E^{hG}$-module $M$ is in $\pic(E^{hG})$.
	\item $E^\vee_\ast M$ is a free $E^\vee_\ast E^{hG}$-module of rank $1$.
    \end{itemize}
\end{remark}
\begin{remark}
    A direct proof of the equivalence between (2) and (3) is also possible. By
    replacing $M$ be $\Sigma M$ if necessary, we may assume that $E^\vee_\ast
    M$ (resp. $K(n)_\ast M$) is concentrated in even degrees. Using
    \cite[Proposition 8.4]{hs}, we see that in this case, the rank of
    $E^\vee_\ast M$ as an $E_\ast$-module agrees with the dimension of
    $K(n)_\ast M$ as a $K(n)_\ast$-module. This is a version of Nakayama's
    lemma in the case of spectra with \emph{even} completed $E$-homology.
\end{remark}
\begin{lemma}
There is an equivalence $\pic(\spf E) \simeq \pic(E)$ that respects the
    $\Gamma$-action.
\end{lemma}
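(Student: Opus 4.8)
The plan is to compare the two Picard spaces via the equivalence $\QCoh(\spf E) \simeq \Mod(E)^\wedge_\frakm$ of Lemma \ref{qcoh}, where $\frakm = (p, u_1, \dots, u_{n-1})$. First I would observe that $E$ satisfies the hypotheses preceding Proposition \ref{picinv}: $\pi_0 E \simeq W(k)[[u_1,\dots,u_{n-1}]]$ is a complete regular local Noetherian ring, and it is $\frakm$-complete; moreover, for Morava $E$-theory the dualizable $E$-modules coincide with the perfect ones (this is the content of \cite{baker}, Theorem 8.7, already invoked in the proof of Theorem \ref{hms}). Hence Proposition \ref{picinv} applies directly and gives an equivalence of spaces $\pic(\spf E) \simeq \pic(\QCoh(\spf E))$, identifying the Picard space of the derived formal scheme with the space of invertible objects in the symmetric monoidal $\infty$-category $\Mod(E)^\wedge_\frakm$.

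Next I would unwind what $\pic(E)$ means on the right: since $E$ is even periodic with $\pi_0 E$ local, every invertible $E$-module is a shift of a line bundle over $\pi_0 E$, and since $\pi_0 E$ is local every such line bundle is free, so $\pic(E) \simeq \Z$ generated by $\Sigma E$ (this uses \cite{baker} again, exactly as in the proof of Theorem \ref{hms}, together with the discussion at the end of Section 2 on even periodic rings). The remaining point is that an invertible $E$-module is automatically $\frakm$-complete: indeed $E$ and $\Sigma E$ are $\frakm$-complete (as $E$ is an adic $\Eoo$-ring complete with respect to $\frakm$), and $\frakm$-completeness is preserved under shifts; so every invertible object of $\Mod(E)$ already lies in $\Mod(E)^\wedge_\frakm$, and conversely any invertible object of $\Mod(E)^\wedge_\frakm$ is perfect (being dualizable, by the second hypothesis of Proposition \ref{picinv}), hence invertible in $\Mod(E)$ by the argument reproduced in the proof of Proposition \ref{picinv}. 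Therefore the inclusion $\Mod(E)^\wedge_\frakm \hookrightarrow \Mod(E)$ induces an equivalence on Picard spaces, yielding $\pic(\QCoh(\spf E)) \simeq \pic(E)$.

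Finally I would check $\Gamma$-equivariance. The $\Gamma$-action on $\pic(\spf E)$ is, by construction, induced from the $\Gamma$-action on $\co_\dX(\spf E) = E$ by $\Eoo$-maps of adic $\Eoo$-rings; under the identifications above this is visibly the same as the $\Gamma$-action on $\pic(E)$ coming from functoriality of $\Mod(-)$ in the $\Eoo$-ring $E$ (the paper has already noted that ``the $G$-actions on $\pic(\spf R)$ and $\pic(\spec R)$ are the same''). So each step of the comparison $\pic(\spf E) \simeq \pic(\QCoh(\spf E)) \simeq \pic(E)$ is natural in $E$ as an $\Eoo$-ring, hence $\Gamma$-equivariant, and composing gives the desired $\Gamma$-equivariant equivalence. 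The only mild obstacle is bookkeeping the compatibility of the several equivalences (Proposition \ref{picinv}, the completion comparison, and $\pic(E)\simeq\Z$) as maps of $\Gamma$-spaces rather than just bare spaces, but since each arrow is manifestly induced by a construction functorial in the $\Eoo$-ring with $\Gamma$-action, there is nothing deeper to prove.
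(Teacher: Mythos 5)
Your overall strategy --- mutual containment of the two Picard spaces, with the crux being that an invertible object of $\Mod(E)^\wedge_\frakm$ is a perfect $E$-module --- is essentially the same as the paper's ``more topological'' proof; the difference is where that crux is sourced. The paper cites \cite[Theorem 8.5.0.3]{SAG}, or for the direct argument \cite[Proposition 10.11]{mathew-thesis}, for the statement that an element of $\pic(\spf E)$ is perfect, whereas you extract it from Proposition \ref{picinv}. That is a legitimate and more self-contained route, but one step is off as written: you justify ``any invertible object of $\Mod(E)^\wedge_\frakm$ is perfect'' by saying it is dualizable and invoking the second hypothesis of Proposition \ref{picinv}. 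That hypothesis identifies dualizable objects of $\Mod(E)$ with perfect ones, while invertibility in $\Mod(E)^\wedge_\frakm$ only gives dualizability with respect to the \emph{completed} tensor product; the passage from ``dualizable in the complete category'' to ``perfect over $E$'' is precisely the nontrivial input (it uses that $\pi_0 E$ is regular Noetherian, and is the content of the Hovey--Strickland/Mathew result the paper cites). The repair is already available in your first paragraph: by the conclusion of Proposition \ref{picinv}, an invertible object of $\QCoh(\spf E)$ is a suspension of a line bundle in the sense of Definition \ref{vb}, hence a finitely generated projective $E$-module, hence perfect and dualizable in $\Mod(E)$; from there your completion comparison and the naturality argument for $\Gamma$-equivariance go through. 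One small factual slip: $\pi_0\pic(E)$ is $\Z/2$, not $\Z$, since $\Sigma^2 E\simeq E$ by even periodicity (the paper records this in a later remark); fortunately all you actually use is that every invertible $E$-module is a shift of $E$ and hence $\frakm$-complete, which is unaffected.
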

\begin{proof}
    This follows from \cite[Theorem 8.5.0.3]{SAG}.  Here is another, more
    topological, proof of this claim: clearly, any element of $\pic(E)$ is
    contained in $\pic(\spf E)$.  Conversely, an element of $\pic(\spf E)$ is
    contained in $\pic(E)$ if and only if it is a perfect $E$-module.  This
    follows from \cite[Proposition 10.11]{mathew-thesis}.
\end{proof}
This tells us that
$$\pic(E)^{hG} \simeq \pic(\spf E/G) \simeq \pic(E^{hG})$$
for any finite subgroup $G\subseteq\Gamma$.

Some results follow directly from our proof of Theorem \ref{hms}.
\begin{remark}
    There is a homotopy equivalence $\pic(E) \simeq BGL_1(E) \simeq
    \Omega^\infty \bgl_1 E$, which tells us that
    $$\pi_0\pic(E) \simeq \mathbf{Z}/2,$$
    and that
    $$\pi_1 \pic(E) \simeq
    (W(\mathbf{F}_{p^n})[[u_1,\cdots,u_{n-1}]])^\times.$$
    In fact, there is a fiber sequence
    $$\bgl_1(E) \to \mathfrak{pic}(E)\to H\mathbf{F}_2.$$
\end{remark}
The above results furnish a homotopy equivalence $\pic(E)^{h\Gamma} \simeq
\pic\Lk\Sp$, which gives a homotopy fixed points spectral sequence for
computing $\pic\Lk\Sp$ of signature
\begin{equation}\label{sseq}
E_2^{s,t} = \H^{s}_c(\Gamma;\pi_t \pic(E)) \Rightarrow \pi_{t-s} \pic\Lk\Sp.
\end{equation}
Note that $E_2^{1,1} \simeq \pic_n^{\mathrm{alg},0}$.

We remark that one can construct a map
$$ \epsilon:\pi_0 \pic \Lk \Sp\to \H^1_c(\Gamma; \pi_1 \pic E) \simeq
\H^1_c(\Gamma; E_0^\times) $$
as follows. Let $\cL$ be an element of $\pi_0 \pic \Lk \Sp$, thought of as (an
equivalence class of) a line bundle on $\spf E/\Gamma$. This is a
$\Gamma$-equivariant line bundle on $\spf E$. The underlying line bundle gives
rise to a $\Gamma$-equivariant line bundle on $\spf \pi_0 E$. The monodromy
action ($\Gamma$ is the ``\'etale fundamental group'' of the quotient stack
$\spf E/\Gamma$; see \cite{mathew-thesis}) gives rise to a (continuous)
representation
$$\Gamma\to \GL_1(\pi_0 E) = E_0^\times,$$
which gives the desired map $\epsilon$.

\begin{example}
    We can use the homotopy fixed point spectral sequence of Equation
    \eqref{sseq} to recover the result of Theorem \ref{piccomputation}.  First,
    suppose that $p$ is odd.  Recall (e.g., from \cite{henn}) that for $t>1$ we
    have
    $$E_2^{s,t} = \H^s_c(\Z_p^\times; \pi_t \pic(K_p)) \simeq \begin{cases}
	\Z/p^{\nu_p(t^\prime)+1} & t = 2(p-1)t^\prime+1, s=1\\
	0 & \text{else.}
    \end{cases}$$
    None of these terms contribute to the $t-s=0$ line.  Other contributions
    come from
    $$E_2^{1,1} = \H^1_c(\Z_p^\times,\Z_p^\times) \simeq \Z_p^\times \simeq
    \Z_p\times \Z/(p-1),$$
    and $E_2^{0,0} = \H^0_c(\Z_p^\times, \Z/2) \simeq \Z/2$.
    By sparseness, we learn that $E_2\simeq E_\infty$.  We are left with an
    extension problem on the line $t-s=0$, which is solved by \cite[Proposition
    2.7]{hmsref}.  If $p=2$, the same argument works, although in this case
    $E_2^{1,1} \simeq \Z_2^\times\times \Z/2$, and the extension problem is
    trivial.
\end{example}
Likewise, the equivalence
$$\pic(E)^{hG} \simeq \pic(E^{hG})$$
beget a spectral sequence
$$E_2^{s,t} = \H^s(G; \pi_t \pic(E)) \Rightarrow \pi_{t-s} \pic(E^{hG}).$$

\subsection{Anderson self-duality}
%\subsubsection{Local results}
In this section, we will abuse notation by writing $I_\Z$ for $\Lk I_{\Z_p} \Lk
S$. If $G\subseteq \Gamma$ is a finite subgroup of the Morava stabilizer group
(and if $G = \Gamma$), the pushforward $q_\ast$ coming from the quotient map
$q:\spf E \to \spf E/G$ admits a right adjoint $q^!$. Explicitly, one has
$$q^!(M) = \Lk\underline{\Map}_{E^{hG}}(E, M).$$
We begin with the trivial observation that $\spf E$ is self-dual.
\begin{theorem}\label{genbbs}
    Let $G$ be a finite subgroup of $\Gamma$. Then $I_\Z E^{hG}$ is in the
    Picard group of $E^{hG}$.
\end{theorem}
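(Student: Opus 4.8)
The plan is to identify $I_\Z E^{hG}$ with a dualizing sheaf on the stack $\spf E/G$ and then use that $\spf E/G$ is self-dual (Gorenstein) to force this dualizing sheaf to be a shifted line bundle. Write $f\colon \spf E/G \to \spec S$ for the structure morphism. As recorded in the introduction, $f^! I_\Z \simeq \Lk I_\Z E^{hG}$; since $E^{hG}$ is $K(n)$-local, the function spectrum $\underline{\Map}(E^{hG}, I_\Z)$ is already $K(n)$-local, so $f^! I_\Z$ is simply $I_\Z E^{hG}$ regarded as an object of $\QCoh(\spf E/G)$. Under the equivalence $\pic(\spf E/G) \simeq \pic(E^{hG})$ recorded above, it therefore suffices to show that $f^! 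I_\Z$ is an invertible object of $\QCoh(\spf E/G)$.

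First I would pull back along the étale $G$-cover $q\colon \spf E \to \spf E/G$. Because $q$ is finite étale, the pushforward $q_\ast$ is ambidextrous and there is a natural equivalence $q^! \simeq q^\ast$; together with the composition law $(fq)^! \simeq q^! f^!$ this gives $q^\ast f^! I_\Z \simeq (fq)^! I_\Z$. Here $fq\colon \spf E\to \spec S$ is the structure morphism of $\spf E$, and by Remark \ref{redemption} — that is, Lemma \ref{IZ} applied with $\dX = \spf E$ — the object $(fq)^! I_\Z$, which is the dualizing spectrum $I_\Z E$, is a dualizing sheaf on $\spf E$. Since $q$ is an étale surjection of locally Noetherian even periodic derived Deligne-Mumford stacks (the source is the affine $\spf E$ with $\pi_0 E$ Noetherian, and $\spf E/G$ is its quotient by a finite group), Lemma \ref{etaledual} applies and shows that $f^! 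I_\Z$ is a dualizing sheaf on $\spf E/G$.

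It remains to see that any dualizing sheaf on $\spf E/G$ is invertible. Since $q\colon \spf E\to\spf E/G$ is a finite étale cover and $\pi_0 E\cong W(k)[[u_1,\cdots,u_{n-1}]]$ is a regular local ring of Krull dimension $n$ — hence of finite global dimension and, being regular, Gorenstein — the stack $\spf E/G$ has finite global dimension, and (the Gorenstein property being étale-local) its underlying Deligne-Mumford stack is Gorenstein. By Remark \ref{underlying-gorenstein}, $\spf E/G$ is therefore self-dual, so $\co_{\spf E/G}$ is itself a dualizing sheaf; Lemma \ref{uniqueness-up-to-line-bundles} then gives $f^! I_\Z\simeq\co_{\spf E/G}\otimes\cL$ for a line bundle $\cL$, so $f^! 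I_\Z$ is invertible. Combined with the first paragraph, $I_\Z E^{hG}=f^! I_\Z\in\pic(\spf E/G)\simeq\pic(E^{hG})$, as desired.

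The main obstacle I expect is the clean identification $q^\ast f^! I_\Z \simeq (fq)^! I_\Z = I_\Z E$: one must know $q^! \simeq q^\ast$ for this finite étale cover and that the resulting sheaf on $\spf E$ is exactly the dualizing sheaf supplied by Remark \ref{redemption}; once this is in hand, Lemmas \ref{etaledual} and \ref{uniqueness-up-to-line-bundles} do the work. It is worth noting that Theorem \ref{andinvert} does not apply to $\spf E/G$ directly when $p \mid \#G$ — the map $\spf\pi_0 E/G\to \Mfg$ need not be tame — which is precisely why the argument is routed through $\spf E$ and the étale-descent Lemma \ref{etaledual}.
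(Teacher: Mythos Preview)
Your proposal is correct and follows essentially the same route as the paper: establish that $\spf E/G$ is self-dual, establish that $f^! I_\Z$ is a dualizing sheaf on $\spf E/G$, and then conclude invertibility by uniqueness of dualizing sheaves up to line bundles. The paper packages the last two steps into a single citation of Theorem~\ref{andinvert} (together with Remark~\ref{redemption}), and uses Lemma~\ref{etaledual} applied to the structure sheaf to get self-duality; you instead use Lemma~\ref{etaledual} applied to $f^! I_\Z$ (after identifying $q^\ast f^! I_\Z$ with $I_\Z E$) and invoke Remark~\ref{underlying-gorenstein} for self-duality, then finish with Lemma~\ref{uniqueness-up-to-line-bundles} directly. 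This is the same argument unpacked, and your observation that the tameness hypothesis in Theorem~\ref{andinvert} is not literally satisfied when $p\mid \#G$ --- so that one must route through $\spf E$ via Remark~\ref{redemption} and étale descent rather than cite Theorem~\ref{andinvert} verbatim --- is exactly the point the paper's parenthetical ``(and Remark~\ref{redemption})'' is gesturing at.
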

\begin{proof}
    Let $G$ be a finite subgroup of $\Gamma$. Then there is an equivalence
    $$\QCoh(\spec E/G)\simeq \Mod(E)^{hG}.$$
    Since the extension $E^{hG}\to E$ is $G$-Galois (\cite[Example
    6.2]{mathew-meier}), there is an equivalence
    $$\Mod(E)^{hG}\simeq \Mod(E^{hG}).$$
    Utilizing Lemma \ref{etaledual}, we learn that $\spec E/G$ is self-dual,
    so that Theorem \ref{andinvert} (and Remark \ref{redemption}) shows that
    $I_\Z E^{hG}$ is in
    $$\pic \spec E/G \simeq \pic(E^{hG})\simeq \pic(E)^{hG}.$$
\end{proof}
In future work, we will generalize this (using Theorem \ref{andinvert} again)
to ``global'' cases like $\Tmf$ with level structure, and PEL Shimura varieties
as considered in \cite{BL}, as well as to genuine $K$-equivariant versions,
where $K$ is a finite abelian group.

As a corollary, we obtain a reproof of a consequence of a recent result of
Barthel-Beaudry-Stojanoska (\cite{barthelbeaudrystojanoska}).
\begin{corollary}\label{genbbs-cor}
    Let $G$ be a finite subgroup of $\Gamma$ at height $p-1$. Then $\Lk I_\Z
    E^{hG}$ is equivalent to a shift of $E^{hG}$.
\end{corollary}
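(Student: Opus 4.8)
The plan is to combine Theorem~\ref{genbbs} --- which already puts $\Lk I_\Z E^{hG}$ in $\pic(E^{hG})$ --- with the structure of $\pic(E^{hG})$ at height $p-1$. For the principal case $G=C_p$ this is almost immediate: Heard--Mathew--Stojanoska (\cite{pichms}) computed $\pic(E^{hC_p})=\pic(EO_{p-1})\cong\Z/(2p^2)$, generated by $\Sigma EO_{p-1}$, so \emph{every} element of this group --- in particular $\Lk I_\Z E^{hC_p}$ --- is a suspension of $E^{hC_p}$. The remaining paragraphs give a more robust variant, closer to the discussion in the introduction, which uses only that the subgroup $\kappa\subseteq\pic(E^{hG})$ of \emph{exotic} elements (those $X$ with $E^\vee_\ast X\cong E^\vee_\ast E^{hG}$ as Morava modules) is cyclic or trivial, and which therefore adapts to other finite $G$.

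The first step is to pin down $\Lk I_\Z E^{hG}$ up to a suspension. Pulling back along the finite \'etale $G$-Galois cover $\tau\colon\spf E\to\spf E/G$, and using $\tau^!\simeq\tau^\ast$ for a finite \'etale map, gives $\tau^\ast(\Lk I_\Z E^{hG})\simeq(f\tau)^!I_\Z\simeq\Lk I_\Z E$; by Remark~\ref{redemption} (Lemma~\ref{IZ} for $\spf E$) together with Proposition~\ref{picinv} and \cite{baker}, every invertible object of $\QCoh(\spf E)$ is a suspension of $E$, so $\Lk I_\Z E\simeq\Sigma^d E$ for some $d\in\Z$. The descent datum of this line bundle along $\tau$ is the monodromy of $f^!I_\Z$, which by Gross--Hopkins is a twist of the determinant character $\Gamma\to\Z_p^\times\subseteq E_0^\times$; this restricts trivially to any $p$-subgroup, since $\Z_p^\times$ has no $p$-torsion. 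Hence, for $G$ a $p$-group (in particular $G=C_p$), the descent datum is trivial and $E^\vee_\ast(\Sigma^{-d}\Lk I_\Z E^{hG})\cong E^\vee_\ast E^{hG}$, i.e.\ $\Sigma^{-d}\Lk I_\Z E^{hG}\in\kappa$. Spanier--Whitehead self-duality of $E^{hG}$ --- established in Appendix~\ref{appendix} for $G=C_p$ at heights divisible by $p-1$ --- is what lets one pass between $\pic(E^{hG})$ and its Morava-module approximation in this argument.

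The last step runs the exotic-Picard input. For $G$ a $p$-group, $\Gamma$ acts on $\pi_2 E$ through a character that is trivial on $G$, so every even suspension $\Sigma^{2k}E^{hG}$ is exotic; these constitute the subgroup $\langle\Sigma^2 E^{hG}\rangle\subseteq\kappa$. At height $p-1$, $\kappa$ is a subgroup of the cyclic group $\pic(EO_{p-1})\cong\Z/(2p^2)$, hence cyclic; and since $\Sigma E^{hG}$ is \emph{not} exotic (its Morava module has the wrong parity), $\kappa$ is a proper subgroup, which forces $\kappa=\langle\Sigma^2 E^{hG}\rangle$. Therefore $\Sigma^{-d}\Lk I_\Z E^{hG}$, being in $\kappa$, is a suspension of $E^{hG}$, and so is $\Lk I_\Z E^{hG}$. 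I expect the real obstacle to be precisely this last input: the cyclicity (or triviality) of $\kappa$, and the Spanier--Whitehead self-duality of $E^{hG}$, which our methods reduce to but do not establish for general finite $G$; moreover, since $d$ is determined only modulo the periodicity of $E^{hG}$, the argument does not recover the exact shift $(p-1)^2-1$ obtained by Barthel--Beaudry--Stojanoska.
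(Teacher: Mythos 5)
Your first paragraph is exactly the paper's proof: Theorem~\ref{genbbs} places $\Lk I_\Z E^{hG}$ in $\pic(E^{hG})$, and the Heard--Mathew--Stojanoska cyclicity of $\pi_0\pic(E^{hG})$ at height $p-1$ (generated by $\Sigma E^{hG}$) immediately forces it to be a suspension of $E^{hG}$, so the corollary is already established there. The remaining paragraphs essentially reproduce the paper's later, conditional Theorem~\ref{andersonselfdual} and are not needed here; be aware that the claim that every even suspension $\Sigma^{2k}E^{hG}$ lies in $\kappa(G)$ is not justified (the Morava module $E^\vee_\ast\Sigma^2 E^{hG}$ is twisted by the full $\Gamma$-action on $\pi_2E$, not just the $G$-action), though this does not affect the validity of the corollary itself.
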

\begin{proof}
    At height $p-1$, since $\pi_0 \pic(E^{hG})$ is cyclic (\cite{pichms}), we
    conclude from Theorem \ref{genbbs} that $E^{hG}$ is Anderson self-dual.
\end{proof}
\begin{remark}\label{sw-self-duality}
    We can deduce the $K(n)$-local Spanier-Whitehead self-duality of $E^{hG}$
    at height $p-1$ from the above example. (This self-duality is true more
    generally, as we will prove below, but this example illustrates an
    application of Theorem \ref{genbbs}.) Since $I_\Z$ is invertible by
    Gross-Hopkins duality (see Remark \ref{grosshopkins}), we know that
    $$DE^{hG} \simeq I_\Z^{-1}\widehat{\wedge} I_\Z E^{hG}.$$
    From the above example, we know that $\Lk I_\Z E^{hG}$ is equivalent to a
    shift of $E^{hG}$ at $n=p-1$. We will be done if $I_\Z \widehat{\wedge}
    E^{hG}$ is equivalent to a shift of $E^{hG}$. As
    $$(I_\Z \widehat{\wedge} E^{hG}) \widehat{\wedge}_{E^{hG}} M \simeq I_\Z
    \widehat{\wedge} M,$$
    we can use Gross-Hopkins duality to deduce that $M = E^{hG}
    \widehat{\wedge} I_\Z^{-1}$ is an inverse to $I_\Z \widehat{\wedge} E^{hG}$
    in $\Lk \Mod(E^{hG})$. It follows from $\pi_0 \pic(E^{hG})$ being cyclic
    that $I_\Z \widehat{\wedge} E^{hG}$ is a shift of $E^{hG}$, as desired.
\end{remark}
\begin{remark}
    For instance, we recover the well-known result that $KO^\wedge_2$ is
    $K(1)$-locally Spanier-Whitehead self-dual. At the prime $3$, there is an
    equivalence $L_{K(2)} TMF \simeq EO_2$; therefore, we also recover the
    $K(2)$-local Spanier-Whitehead self-duality of $L_{K(2)} TMF$. This result
    is originally due to Behrens (\cite[Proposition 2.6.1]{behrens}).
\end{remark}
This motivates a natural conjecture, which is widely believed to be true:
\begin{conjecture}\label{sw-dual}
    Let $G\subseteq \Gamma$ be a finite subgroup of the Morava stabilizer group
    at height $n$. Then $D(E^{hG}) \simeq (DE)^{hG}$ is a shift of $E^{hG}$,
    i.e., $E^{hG}$ is Spanier-Whitehead self-dual.
\end{conjecture}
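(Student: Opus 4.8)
The plan is to first establish, unconditionally, that $D(E^{hG})$ is an invertible $E^{hG}$-module, and then to isolate the remaining content of the conjecture as a statement about exotic Picard elements. The first step is essentially the argument of Remark \ref{sw-self-duality}, run at arbitrary height rather than only at $n = p-1$: by Theorem \ref{genbbs} the object $\Lk I_\Z E^{hG} = f^! I_\Z$ lies in $\pic(E^{hG})$, and by Gross-Hopkins duality \cite{strickland} the spectrum $I_\Z$ (meaning $\Lk I_{\Z_p}\Lk S$, as in the section's abuse of notation) is invertible in $\Lk\Sp$. Since smashing against an invertible object of $\Lk\Sp$ is an $\Lk\Sp$-linear autoequivalence, one gets $I_\Z E^{hG}\simeq D(E^{hG})\widehat{\wedge} I_\Z$, hence $D(E^{hG}) \simeq I_\Z^{-1}\widehat{\wedge}(I_\Z E^{hG})$; the right-hand side is an $E^{hG}$-module (via the $E^{hG}$-structure on $I_\Z E^{hG}$) with inverse $I_\Z\widehat{\wedge}(I_\Z E^{hG})^{-1}$, so $D(E^{hG})\in\pic(E^{hG})$. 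Up to this point nothing about the height or about $\pi_0\pic(E^{hG})$ has been used.

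The conjecture is thus equivalent to showing that this particular invertible $E^{hG}$-module is a \emph{shift} of $E^{hG}$. By Remark \ref{piccyclic}, invertibility of an $E^{hG}$-module is the same as its completed $E$-homology being free of rank one over $E^\vee_\ast E^{hG}$ as a Morava module, so $D(E^{hG})$ is a shift exactly when (i) $E^\vee_\ast D(E^{hG})\cong \Sigma^k E^\vee_\ast E^{hG}$ as Morava modules for some $k$, and (ii) $\Sigma^{-k}D(E^{hG})$ is not an exotic element of $\pic(E^{hG})$. Via the Galois descent equivalence $\Mod(E^{hG})\simeq\Mod(E)^{hG}$ used in the proof of Theorem \ref{genbbs}, this is the statement that $E\widehat{\wedge}_{E^{hG}}D(E^{hG})\simeq\Sigma^\epsilon E$ (some $\epsilon\in\{0,1\}$, since $\pi_0\pic(E)=\Z/2$) \emph{with its standard twisted $G$-action}; in terms of the spectral sequence $\H^s(G;\pi_t\pic(E))\Rightarrow\pi_{t-s}\pic(E^{hG})$, one must show the class of $D(E^{hG})$, after correcting by the shift, has vanishing image in $\H^1(G;E_0^\times)$ (the monodromy/algebraic contribution) and in $\H^{s}(G;\pi_t\pic(E))$ for $s\geq 2$ (the exotic contributions). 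When $\pi_0\pic(E^{hG})$ is cyclic — necessarily generated by $\Sigma E^{hG}$, by periodicity — all of these obstruction groups are killed at once, which is exactly how Corollary \ref{genbbs-cor} disposes of height $p-1$.

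For the monodromy piece I would bring in the Gross-Hopkins determinant computation: the line bundle $f^! I_\Z$ on $\spf E/\Gamma$ restricts on $\spf E$ to a one-dimensional representation of $\Gamma$ which is a twist of the determinant character $\det\colon\Gamma\to\Z_p^\times\subseteq E_0^\times$. Tracking this through $D(E^{hG})\simeq I_\Z^{-1}\widehat{\wedge}(I_\Z E^{hG})$ and restricting along $G\hookrightarrow\Gamma$ should express the monodromy class of $D(E^{hG})$ in terms of $\det|_G$, and one then needs that $\det|_G$ is trivial in $\H^1_c(G;E_0^\times)$. This is where finiteness of $G$ enters: a finite subgroup of $\Z_p^\times$ lies in $\mu_{p-1}$, so $\det|_G$ lands in the torsion $\mu_{p-1}\subset W(k)^\times$, and one must check that this character is realized by the $\Gamma$-action on $E_0$ itself (equivalently, is a coboundary) — plausible, but it requires a genuine computation with the formulas for the action on $W(k)[[u_1,\dots,u_{n-1}]]$. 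The parallel analysis of $E^\vee_\ast D(E^{hG})$ as a Morava module would use the same determinant description together with the identification of $E^\vee_\ast E^{hG}$ with continuous functions on $\Gamma/G$ valued in $E_\ast$.

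The main obstacle is the exotic Picard group $\kappa(E^{hG})$: the monodromy issue above looks tractable, but there is no input in the descent-theoretic framework of this paper that forces the specific element $D(E^{hG})$ to be non-exotic once $\kappa(E^{hG})\neq 0$ (which does happen at higher heights). So in practice I expect the proof to go through either by first proving $\pi_0\pic(E^{hG})$ is cyclic — the future work alluded to in the introduction, e.g.\ for $EO_{2(p-1)}$ via the methods of \cite{pichms} and the $E_2$-page of \cite{hhr} — or by computing $E^\vee_\ast D(E^{hG})$ and its whole place in the Picard spectral sequence by hand and checking directly that the answer is a shift of $E^\vee_\ast E^{hG}$. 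Either route needs new input beyond Theorems \ref{andinvert} and \ref{genbbs}, which is why the statement is recorded here only as a conjecture.
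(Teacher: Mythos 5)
Your proposal correctly establishes the easy half --- that $D(E^{hG})$ is an invertible $E^{hG}$-module, via Theorem \ref{genbbs} and the invertibility of $I_\Z$ --- and your diagnosis of where the Picard-theoretic route stalls (the exotic subgroup $\kappa(G)$, plus the monodromy class) is accurate. But the proposal does not prove the statement: you end by conceding that new input is needed and that the result should remain a conjecture. The paper, however, actually proves it in Appendix \ref{appendix} (Proposition \ref{conj-proof}) in the outstanding case, namely height $n(p-1)$ with $G$ having Sylow $p$-subgroup $C_p$ (the case $(p-1)\nmid n$ being classical), and the missing idea is a route that bypasses the Picard group entirely. By \cite[Proposition 16]{strickland}, $\pi_\ast DE$ is free of rank one over $\pi_\ast E$ as a $G$-module on a generator $\gamma$, so the homotopy fixed point spectral sequence for $DE^{hG}$ is a rank-one free module over that of $E^{hG}$. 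Since these spectral sequences collapse at a finite stage, it suffices (by \cite[Lemma 4.7]{barthelbeaudrystojanoska}) to exhibit a single permanent cycle of the form $\delta_n^N\gamma$; the explicit Hill--Hopkins--Ravenel differentials $d_{2p^i-1}(\delta_n^{p^{i-1}}) = a_i\delta_n^{p^{i-1}}h_{i,0}\beta^{p^i-1}$ of Theorem \ref{eon} then let one choose the exponent $N = \sum \ell_{i+1}p^i$ inductively, digit by digit, so that each successive differential on $\delta_n^N\gamma$ cancels. This produces the equivalence $DE^{hC_p}\simeq\Sigma^{2pN}E^{hC_p}$ directly, and the general $G$ is handled by taking $G/C_p$-norms after the Lyndon--Hochschild--Serre degeneration.

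The concrete gap in your approach, then, is that you treat the problem as one about the image of $D(E^{hG})$ in $\pi_0\pic(E^{hG})$, where the exotic subgroup genuinely obstructs any formal argument; the paper instead works one categorical level down, with an explicit module generator of the dual in the fixed-point spectral sequence, where ``being a shift'' becomes the concrete and checkable statement that some $\delta_n^N\gamma$ survives. Your observation that $D(E^{hG})$ is unconditionally invertible is correct and worth keeping (it matches Remark \ref{sw-self-duality} and the proof of Theorem \ref{andersonselfdual}), but it is not the engine of the proof. One further caution: your proposed use of the Gross--Hopkins determinant formula to control the monodromy class is circular relative to this paper's stated aims, since the paper explicitly defers a derived-geometric proof of the determinant description to future work and does not have the finiteness hypotheses needed to run Theorem \ref{andinvert} for the non-finite cover $\spf E\to\spf E/\Gamma$.
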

\begin{remark}
    Conjecture \ref{sw-dual} is true if $(p-1)$ does not divide $n$. In
    Appendix \ref{appendix}, we prove Conjecture \ref{sw-dual} when $(p-1)$
    divides $n$ in the case when $G$ has Sylow $p$-subgroup $C_p$ (hinging on
    unpublished work of Hill-Hopkins-Ravenel in \cite{hhr} and \cite{hhr-eon}).
    This property is satisfied by all finite subgroups with nontrivial
    $p$-torsion of the Morava stabilizer group whenever $p$ does not divide
    $n/(p-1)$.
\end{remark}
\begin{definition}
    Let $\kappa(G)$ be the group of ``exotic'' invertible $E^{hG}$-modules,
    i.e., the group of invertible $E^{hG}$-modules $M$ such that, as
    $E_\ast[[\Gamma]]$-modules, $E^\vee_\ast(M) \simeq E^\vee_\ast(E^{hG})$.
\end{definition}
Conditional on Conjecture \ref{sw-dual}, we obtain the following result (whose
proof is just Remark \ref{sw-self-duality} run backwards), which is a
generalization of Corollary \ref{genbbs-cor}:
\begin{theorem}\label{andersonselfdual}
    Assume Conjecture \ref{sw-dual}. Suppose $G\subset \Gamma$ is a finite
    subgroup. If $\kappa(G)$ is cyclic or trivial, then $\Lk I_\Z E^{hG}$ is
    equivalent to a shift of $E^{hG}$.
\end{theorem}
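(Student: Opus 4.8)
The plan is to run Remark~\ref{sw-self-duality} in reverse. Write $I_\Z$ for the $K(n)$-local Anderson dual $\Lk I_{\Z_p}\Lk S$, which is invertible in $\Lk\Sp$ by Gross--Hopkins duality (Remark~\ref{grosshopkins}). Since $E^{hG}$ is dualizable in $\Lk\Sp$, unwinding the definition of $\Lk I_\Z(-)$ gives $\Lk I_\Z E^{hG}\simeq D(E^{hG})\widehat{\wedge} I_\Z$, and Conjecture~\ref{sw-dual} identifies $D(E^{hG})$ with $\Sigma^a E^{hG}$ for some integer $a$; hence $\Lk I_\Z E^{hG}\simeq\Sigma^a\bigl(E^{hG}\widehat{\wedge} I_\Z\bigr)$, and it suffices to prove that $E^{hG}\widehat{\wedge} I_\Z$ is a shift of $E^{hG}$. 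That this object lies in $\pic(E^{hG})$ at all is recorded in Theorem~\ref{genbbs} (or follows because it is the base change of the invertible $\Lk S$-module $I_\Z$); equivalently, it is the restriction to $\pic(E)^{hG}=\pic(E^{hG})$ of the Gross--Hopkins element $I_\Z=\widehat{I}[-1]\in\pic_n=\pic(E)^{h\Gamma}$.

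I would next pin down this class modulo shifts using the exact sequence $0\to\kappa(G)\to\pic(E^{hG})\xrightarrow{\,M\mapsto E^\vee_\ast M\,}\pic^{\mathrm{alg}}(E^{hG})$ into invertible Morava modules, whose kernel is $\kappa(G)$ by (the proof of) Remark~\ref{piccyclic}. Combining the K\"unneth isomorphism $E^\vee_\ast(E^{hG}\widehat{\wedge} I_\Z)\cong E^\vee_\ast E^{hG}\otimes_{E_\ast}E^\vee_\ast I_\Z$, the projection formula $\mathrm{Ind}_G^\Gamma(A)\otimes B\cong\mathrm{Ind}_G^\Gamma(A\otimes\mathrm{Res}_G B)$ applied to $E^\vee_\ast E^{hG}\cong\mathrm{Ind}_G^\Gamma(E_\ast)$, and Gross and Hopkins' description of the monodromy of $\widehat{I}$ as (essentially) the determinant character $\det\colon\Gamma\to\Z_p^\times\hookrightarrow E_0^\times$, one finds that $E^\vee_\ast(E^{hG}\widehat{\wedge} I_\Z)$ is $E^\vee_\ast E^{hG}$ twisted by a shift and by $\mathrm{Res}_G\det$. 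Since $\Z_p^\times$ is $p$-torsion-free, $\mathrm{Res}_G\det$ is trivial on a Sylow $p$-subgroup of $G$ and, on a prime-to-$p$ complement, is a norm character, hence visible modulo the maximal ideal, where it is a power of the reduction of the periodicity class $\pi_2 E$; a Chinese-remainder argument then supplies an integer $k$ with $\mathrm{Res}_G\det\cong\mathrm{Res}_G(\pi_2 E)^{\otimes k}$, so the twist is absorbed into a shift. Thus the algebraic image of $E^{hG}\widehat{\wedge} I_\Z$ is a shift, and $E^{hG}\widehat{\wedge} I_\Z\simeq\Sigma^b E^{hG}\widehat{\wedge}_{E^{hG}}z$ for some $z\in\kappa(G)$.

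The remaining step --- showing $z$ is trivial --- is the only use of the hypothesis and is where I expect the real difficulty to lie. If $\kappa(G)=0$ there is nothing more to prove. In general, $z$ is built out of the restriction to $E^{hG}$ of the exotic part of $\pic_n$ (the discrepancy between $\widehat{I}$ and $\Sigma^{n^2-n}S\langle\det\rangle$, and between $S\langle\det\rangle\widehat{\wedge} E^{hG}$ and the shift produced above), translated by an algebraic periodicity class; since $S\langle\det\rangle$ restricts to the unit of $\pic(E^{hG})$ precisely when $\det|_G$ is itself a shift-twist --- which the previous paragraph established up to the exotic part --- this confines $z$ to a small cyclic subgroup of $\kappa(G)$. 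One should then be able to eliminate the surviving nontrivial possibilities by direct inspection of the Picard homotopy fixed point spectral sequence $E_2^{s,t}=\H^s(G;\pi_t\pic(E))\Rightarrow\pi_{t-s}\pic(E^{hG})$, using that exotic classes sit in filtration $\ge 2$ and that $\pi_0\pic(E)\simeq\Z/2$, to conclude $z=0$. Granting this, $E^{hG}\widehat{\wedge} I_\Z$, and therefore $\Lk I_\Z E^{hG}$, is a shift of $E^{hG}$.
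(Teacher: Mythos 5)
Your opening reduction coincides with the paper's: invertibility of $I_\Z$ in $\Lk\Sp$ (Remark \ref{grosshopkins}) together with Conjecture \ref{sw-dual} reduces the theorem to showing that $I_\Z\widehat{\wedge}E^{hG}$ is a shift of $E^{hG}$. From that point on the two arguments diverge, and yours has a genuine gap exactly where the hypothesis on $\kappa(G)$ has to do its work. You propose to pin down the algebraic invariant of $I_\Z\widehat{\wedge}E^{hG}$ via the Gross--Hopkins determinant description of the monodromy of $\widehat{I}$ (a result the paper deliberately does not use and defers to future work, though it is available externally), to absorb $\mathrm{Res}_G\det$ into a shift by an unsubstantiated ``Chinese-remainder'' argument, and thereby to reduce to an exotic class $z\in\kappa(G)$ --- and then you only assert that one ``should be able to'' kill $z$ by inspecting the Picard homotopy fixed point spectral sequence. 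That final step is not carried out, is not obviously true (the exotic part of $\pic_n$ is nontrivial in general, and nothing you say shows its image in $\kappa(G)$ dies), and the cyclicity of $\kappa(G)$ enters your argument only to ``confine $z$ to a small cyclic subgroup,'' which does not conclude anything. You have also conflated ``$z$ is trivial'' with what is actually needed, namely that the class of $I_\Z\widehat{\wedge}E^{hG}$ is an integer shift.

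The paper's own final step is different and much shorter: it observes that $I_\Z\widehat{\wedge}E^{hG}$ lies in the image of the map $\pic_n\to\pic(E^{hG})$, $X\mapsto X\widehat{\wedge}E^{hG}$, and argues that the hypothesis that $\kappa(G)$ is cyclic or trivial forces this image to be a cyclic group, necessarily the one generated by $\Sigma E^{hG}$, so that every element of the image is a shift; no determinant computation and no vanishing statement for exotic classes is required. If you want to repair your version, the missing ingredient is a proof that the subgroup of $\pic(E^{hG})$ generated by $\Sigma E^{hG}$ together with the image of the exotic and determinant parts of $\pic_n$ is still generated by $\Sigma E^{hG}$ alone; that is the precise point at which the cyclicity of $\kappa(G)$ must be invoked, rather than a hoped-for spectral sequence argument showing $z=0$.
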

\begin{proof}
    Since $I_\Z$ is $K(n)$-locally invertible (see Remark \ref{grosshopkins}),
    there is an equivalence
    $\Lk I_\Z E^{hG} \simeq I_\Z^{-1} \widehat{\wedge} DE^{hG}$.
    The Tate spectrum $E^{tG}$ is contractible, so
    $$DE^{hG} \simeq \underline{\Map}(E^{hG}, \Lk S) \simeq
    \underline{\Map}(E_{hG}, \Lk S) \simeq \underline{\Map}(E, \Lk S)^{hG}
    \simeq (DE)^{hG}.$$
    By Conjecture \ref{sw-dual}, $DE^{hG}$ is equivalent to a shift of
    $E^{hG}$. We are reduced to proving that $I_\Z^{-1}\widehat{\wedge} E^{hG}$
    is equivalent to a shift of $E^{hG}$.

    To prove that $I_\Z \widehat{\wedge} E^{hG}$ is equivalent to a shift of
    $E^{hG}$, we need to understand the image of $\pic_n$ inside
    $\pic(E^{hG})$, under the map $\pic_n\to \pic(E^{hG})$ given by $X\mapsto
    X\widehat{\wedge} E^{hG}$. Our hypotheses on $\kappa(G)$ are enough to
    guarantee that the image of $\pic_n$ inside $\pic(E^{hG})$ is cyclic; this
    shows that $I_\Z^{-1} \widehat{\wedge} E^{hG}$ is equivalent to a shift of
    $E^{hG}$, as desired.
\end{proof}
We illustrate some examples of Theorem \ref{andersonselfdual}.
\begin{remark}
    Suppose $G$ has order coprime to $p$. We claim that $\Lk I_\Z E^{hG} \simeq
    \Sigma^? E^{hG}$. This is the easiest case of Theorem
    \ref{andersonselfdual}, so we will provide two proofs.
    \begin{enumerate}
	\item It follows from the homotopy fixed point spectral sequence for
	    $\pic(E^{hG})$ that $\pi_0\pic(E^{hG})$ is cyclic if $\gcd(|G|, p)
	    = 1$. Since $I_\Z E^{hG}$ is an invertible $E^{hG}$-module, it
	    follows that $E^{hG}$ is Anderson self-dual.
	\item We claim that $\kappa(G) = 0$; the desired result follows from
	    Theorem \ref{andersonselfdual}. Let $X\in \kappa(G)$, and pick an
	    isomorphism $f:E^\vee_\ast(X) \xar{\sim} E^\vee_\ast E^{hG}$.
	    Shapiro's lemma provides an isomorphism $\wt{f}:\H^\ast(G;\pi_\ast
	    E) \xar{\sim} \H^\ast_c(\Gamma; E^\vee_\ast(X))$. Since
	    $\gcd(|G|,p) = 1$, the group cohomology $\H^s(G;\pi_\ast E)$ is
	    trivial for $s>0$.  Any differential $d_k^X:\H^0_c(\Gamma;
	    E^\vee_0(X)) \to \H^k_c(\Gamma; E^\vee_{k+1}(X))$ is therefore
	    trivial, so the identity class in $\H^0_c(\Gamma; E^\vee_0(X))$
	    survives to the $E_\infty$-page; this begets a map $\Lk S \to X$,
	    which extends to an equivalence $X \simeq E^{hG}$. Since $X$ was
	    arbitrary, we conclude that $\kappa(G) = 0$. If $n$ is not
	    divisible by $p-1$, it is known that all maximal finite subgroups
	    $G\subseteq \Gamma$ have order coprime to $p$ (this is proved, for
	    instance, in \cite[Theorem 1.3]{hewett} and \cite[Proposition
	    1.7]{bujard}). The above discussion now implies that $\Lk I_\Z
	    E^{hG}$ is equivalent to a shift of $E^{hG}$.
    \end{enumerate}
\end{remark}

\begin{example}
    At height $2$ and the prime $2$, it is known that if $G$ contains all the
    $p$-torsion in $\Gamma$, the group $\kappa(G)$ is isomorphic to $\Z/8$
    (\cite[Page 18]{beaudry-slides}). Theorem \ref{andersonselfdual} proves
    that at $p=2$, the spectrum $\Ltwo I_\Z E^{hG}$ is equivalent to a shift of
    $E^{hG}$.
\end{example}

\begin{remark}
    One does not need $\kappa(G)$ to vanish in order to get self-duality: if
    $F\subseteq \Gamma$ (at any height and prime) is in the kernel of the
    determinant map, then $\spf E/F$ is self-dual; indeed, the proof of
    Proposition \ref{andersonselfdual} shows that, in order to prove the
    Anderson self-duality of $E^{hF}$, we only need to know that $\Lk
    I_\Z^{-1}\wedge E^{hF}$ is equivalent to a shift of $E^{hF}$. This follows
    (e.g., from analyzing the homotopy fixed points spectral sequence) from the
    fact that $F\subseteq \ker\det$.
\end{remark}

In \cite{gross}, Gross and Hopkins prove the following result.

\begin{remark}[Gross-Hopkins duality]\label{grosshopkins}
    Let $MS$ denote the fiber of the map $L_n S\to L_{n-1} S$. Gross-Hopkins
    duality asserts that the spectrum $I_{\QQ/\Z} MS$ is invertible. There is
    an equivalence $$I_{\QQ/\Z} MS \simeq \Sigma \Lk I_\Z \Lk S.$$ This follows
    immediately from the fact that $\Lk I_{\QQ} X \simeq 0$. It therefore
    suffices to prove that $f^! I_\Z$ (whose underlying $K(n)$-local spectrum
    is $\Lk I_\Z \Lk S$) is invertible, where $f:\spf E/\Gamma\to \spec S$ is
    the structure map.

    We have an \'etale cover $\tau:\spf E\to \spf E/\Gamma$, but it is not a
    finite morphism. This map therefore does not satisfy the hypotheses of
    Lemma \ref{etaledual}. However, one can use the equivalence (see
    \cite{strickland}) $\Sigma^{n^2} DE \simeq E$ to show that $\spf E/\Gamma$
    is self-dual. In order to establish that $f^! I_\Z$ is invertible, it
    suffices to establish an analogue of Theorem \ref{andinvert}. However, the
    finiteness assumptions there do not apply to $\tau$, so we do not know how
    to prove this.
\end{remark}

\begin{remark}
    As $f^! I_\Z$ is invertible, the dualizing spectrum $\tau^\ast f^! I_\Z$
    defines a line bundle on $\spf E$. The action of $\Gamma$ defines a map
    $\Gamma\to \GL_1(E)$. Gross and Hopkins show that composing with the map
    $\GL_1(E)\to \GL_1(\pi_0 E)$ defines the determinant representation of
    $\Gamma$. We will return to the problem of proving this result via derived
    algebro-geometric methods in a later paper.
\end{remark}

\appendix

\section{Spanier-Whitehead self-duality of $E_{n(p-1)}^{hG}$}\label{appendix}
In this section, we will work at height $n(p-1)$ for some integer $n$. Fix the
notation $G$ for a finite subgroup of $\Gamma$ whose Sylow $p$-subgroup is
$C_p$. In this section, we will prove the following two results:
\begin{prop}\label{conj-proof}
    Under the assumptions in the beginning of this section, Conjecture
    \ref{sw-dual} is true for $E^{hG}$.
\end{prop}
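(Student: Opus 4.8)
The plan is to reduce Spanier--Whitehead self-duality of $E^{hG}$ to a comparison statement about the $C_p$-homotopy fixed points of $E$, and then feed in the explicit computations of Hill--Hopkins--Ravenel. Throughout write $h = n(p-1)$ for the height. First, the identification $D(E^{hG}) \simeq (DE)^{hG}$ asserted in Conjecture \ref{sw-dual} is formal: since $G$ is a finite subgroup of the Morava stabilizer group, the $K(n)$-local Tate spectrum $E^{tG}$ is contractible, so the norm map $E_{hG} \to E^{hG}$ is an equivalence; applying $\underline{\Map}_{\Lk S}(-,\Lk S)$ and commuting it past the geometric realization computing $E_{hG}$ gives $D(E^{hG}) \simeq \underline{\Map}_{\Lk S}(E,\Lk S)^{hG} = (DE)^{hG}$, exactly as in the proof of Theorem \ref{andersonselfdual}. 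It therefore suffices to show that the $E^{hG}$-module $(DE)^{hG}$ is a suspension of $E^{hG}$.

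Next I would invoke Strickland's theorem \cite{strickland}: there is an equivalence of $E$-modules $\Sigma^{h^2} DE \simeq E$, and it is $\Gamma$-equivariant once the action on $E$ is twisted by the determinant character $\det\colon \Gamma \to \Z_p^\times$; that is, $DE \simeq \Sigma^{-h^2} E\langle\det\rangle$ in $\Mod(E)^{h\Gamma}$, where $E\langle\det\rangle$ denotes $E$ with its $G$-action multiplied by $\det|_G$. Restricting to $G$ and using the Galois descent equivalence $\Mod(E)^{hG} \simeq \Mod(E^{hG})$ (see the proof of Theorem \ref{genbbs}), we get $D(E^{hG}) \simeq \Sigma^{-h^2}(E\langle\det\rangle)^{hG}$ as $E^{hG}$-modules. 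This is an invertible $E^{hG}$-module (its inverse is $(E\langle\det^{-1}\rangle)^{hG}$), so it is a suspension of $E^{hG}$ precisely when its homotopy groups are free of rank one over $\pi_\ast E^{hG}$; via the homotopy fixed point spectral sequence this reduces to comparing the $G$-action on $\pi_\ast E$ with its twist by $\det|_G$.

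Here the structure of $G$ enters. The image of $\det|_G$ is a finite subgroup of $\Z_p^\times$, hence of order prime to $p$, so $C_p \subseteq \ker(\det|_G)$; thus $\det|_{C_p}$ is trivial, which already settles the case $G = C_p$, as then $E\langle\det\rangle|_{C_p}\simeq E|_{C_p}$ and $D(E^{hC_p})\simeq \Sigma^{-h^2} E^{hC_p}$. For general $G$, I would use the classification of finite subgroups of $\Gamma$ with Sylow $p$-subgroup $C_p$ (\cite{hewett}, \cite{bujard}) to arrange $C_p \trianglelefteq G$ with $Q := G/C_p$ of order prime to $p$, so that $E^{hG}\simeq (E^{hC_p})^{hQ}$ and $(E\langle\det\rangle)^{hG} \simeq \big((E^{hC_p})\langle\det\rangle\big)^{hQ}$, with $\det$ now factoring through $Q$. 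Since $|Q|$ is invertible and everything is $p$-complete, $(-)^{hQ}$ computes $Q$-invariants on homotopy, and the claim becomes: the twist by $\det$ of the graded $Q$-module $\pi_\ast E^{hC_p}$ is isomorphic to a regrading of $\pi_\ast E^{hC_p}$. This is exactly where the Hill--Hopkins--Ravenel work (\cite{hhr}, \cite{hhr-eon}) is indispensable: their description of $\H^\ast(C_p;\pi_\ast E)$ at height $n(p-1)$, the differential pattern, and the residual action of $Q$ (through its action on $\omega = \pi_2 E$ and on the cohomology generators) together show that twisting by $\det$ has the same effect as a shift inside the known periodicity of $E^{hC_p}$ --- concretely, that $\det|_Q$ agrees with a power of the character through which $Q$ acts on $\pi_2 E$. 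This yields $\big((E^{hC_p})\langle\det\rangle\big)^{hQ} \simeq \Sigma^? E^{hG}$ and hence $D(E^{hG}) \simeq \Sigma^? E^{hG}$.

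The main obstacle is this last matching step: at a general height the determinant twist need not be absorbable into a shift, and it is precisely the height-$n(p-1)$ hypothesis --- under which $C_p$ embeds into $\Gamma$ and $E^{hC_p}$ becomes ``periodic enough'' --- that makes it go through. Extracting this from the Hill--Hopkins--Ravenel charts, i.e.\ verifying that the relevant differentials and the $Q$-action interact as expected, is the delicate, computational part of the argument; everything else is formal.
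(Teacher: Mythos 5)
Your reduction $D(E^{hG})\simeq (DE)^{hG}$ via vanishing of the Tate construction is fine and matches the paper. But there is a genuine gap at the step you declare ``already settled'': the case $G=C_p$. You pass from Strickland's result to an equivalence $DE\simeq \Sigma^{-h^2}E\langle\det\rangle$ \emph{in} $\Mod(E)^{hG}$, i.e.\ a genuinely $G$-equivariant equivalence, and then take homotopy fixed points. What Strickland \cite[Proposition 16]{strickland} actually supplies (and what the paper uses) is that $\pi_\ast DE$ is free of rank one as a \emph{twisted} $C_p$-$\pi_\ast E$-module on a generator $\gamma$ --- an isomorphism of $E_2$-pages of homotopy fixed point spectral sequences, not an equivalence of equivariant objects. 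Promoting this to an equivalence $D(E^{hC_p})\simeq \Sigma^{?}E^{hC_p}$ requires exhibiting a module generator that is a permanent cycle in the HFPSS for $DE^{hC_p}$, and this is precisely where the height-$n(p-1)$ difficulty lives: the paper shows that $\gamma$ itself supports a nonzero differential, $d_{2p-1}(\gamma)=b_1h_{1,0}\beta^{p-1}\gamma$, so the na\"ive statement $D(E^{hC_p})\simeq\Sigma^{-h^2}E^{hC_p}$ that you assert does not follow (and the shift the paper ultimately obtains is governed by $2pN$ for an $N$ built out of the differential pattern, not by $-h^2$ alone). The entire content of the paper's proof is an induction using the Hill--Hopkins--Ravenel differentials \eqref{important-diffl} to choose exponents $k_1,\ell_2,\ldots$ so that $\delta_n^{N}\gamma$ with $N=\sum\ell_{i+1}p^i$ kills each successive obstruction $d_{2p^{i+1}-1}$; the triviality of $\det|_{C_p}$ does none of this work. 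In short, you have assumed the equivariant rigidification that is the actual theorem.

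Your handling of the passage from $C_p$ to a general $G$ with Sylow $p$-subgroup $C_p$ is also more complicated than necessary: rather than matching $\det|_Q$ against the character of $Q$ on $\pi_2E$, the paper simply notes that the Lyndon--Hochschild--Serre spectral sequence degenerates, so $\H^\ast(G;\pi_\ast DE)\simeq \H^\ast(C_p;\pi_\ast DE)^{G/C_p}$, and the norm of the permanent cycle $\delta_n^N\gamma$ under $G/C_p$ is a permanent cycle for $DE^{hG}$. If you repair the $C_p$ step by actually running the differential analysis, this cleaner descent to $G$ is available and avoids any determinant bookkeeping.
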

\begin{remark}
    Note that every finite subgroup of $\Gamma$ with nontrivial $p$-torsion has
    Sylow $p$-subgroup $C_p$ whenever $p$ does not divide $n$, so Conjecture
    \ref{sw-dual} is true for \emph{every} finite subgroup $G$ in this case.
\end{remark}
Our proofs are computational. We will prove Proposition \ref{conj-proof} by
following the argument in \cite[Corollary 4.11]{barthelbeaudrystojanoska}. We
will rely on the following unpublished result of Hill-Hopkins-Ravenel from
\cite[Propositions 1 and 2]{hhr-eon} (see also \cite{hhr} for a more detailed
exposition in the case $n=2$):
\begin{theorem}[Hill-Hopkins-Ravenel]\label{eon}
    Modulo the image of the transfer (all such elements are permanent cycles),
    the $E_2$-term of the HFPSS for $E_{n(p-1)}^{hC_p}$ is given by
    \begin{align*}
	\Lambda(\alpha_1, \cdots, \alpha_n) \otimes
	P(\beta,\delta_1,\cdots,\delta_n^{\pm 1}),
    \end{align*}
    where the bidegrees of the elements, written in Adams indexing, are
    $|\alpha_i| = (-3,1)$, $|\beta| = (-2, 0)$, and $|\delta_i| = (-2p,0)$.
    Moreover, all of the differentials are determined by
    \begin{itemize}
	\item For $1\leq i\leq n$, there are differentials
	    \begin{equation}\label{important-diffl}
		d_{2p^i-1}(\delta_n^{p^{i-1}}) = a_i \delta_n^{p^{i-1}}
		h_{i,0} \beta^{p^i-1};
	    \end{equation}
	    here, $h_{i,0}$ are certain elements obtained by translating the
	    elements $\alpha_i$ by powers of $\delta_n$, and the elements $a_i$
	    are units in $\FF_{p^n}$.
	\item For $1\leq i\leq n$, there are ``Toda-style'' differentials on
	    the $E_{2(p^i-1)(p-1)-1}$-page which truncate the $\beta$-towers on
	    $\delta_i$.
	\item The classes $\delta_i \delta_n^{-1}$ and $\delta_n^{p^n}$ are
	    permanent cycles.
    \end{itemize}
\end{theorem}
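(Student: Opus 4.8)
The plan is to run the homotopy fixed point spectral sequence with $E_2$-page $\H^\ast(C_p;\pi_\ast E)$ converging to $\pi_\ast E^{hC_p}$, where $E = E_{n(p-1)}$, and to treat its two halves separately: the algebraic identification of the $E_2$-term (modulo transfer) and the geometric determination of the differentials. Throughout write $h = n(p-1)$, fix a generator $g$ of the subgroup $C_p\subseteq\SS_h$, and let $N = 1 + g + \cdots + g^{p-1}$ be the norm. First I would pin down the $C_p$-module structure of $\pi_\ast E$. Since $\pi_0 E = W(k)[[u_1,\ldots,u_{h-1}]]$ is $\frakm$-complete, the $\frakm$-adic filtration is $C_p$-equivariant, so it suffices to analyze the action on the associated graded, i.e. on $\frakm/\frakm^2$ together with the periodicity line $\pi_2 E/\frakm$. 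Two structural facts make this tractable: (a) because $p\nmid|k^\times|$, the order-$p$ element $g$ acts trivially on the periodicity class in $\pi_2 E/\frakm$, so the internal periodicity generator $\beta$ is $C_p$-invariant and lands in $\H^0$; and (b) an element of exact order $p$ in $\SS_h$ acts on the reduced cotangent space $\frakm/\frakm^2$ with no nonzero invariants, and over $k=\FF_{p^h}$ this representation splits as $n$ copies of each of the $p-1$ nontrivial characters of $C_p$. It is this $n$-fold multiplicity that is the source of the $n$ exterior generators $\alpha_i$ and the $n$ polynomial generators $\delta_i$; establishing (b) is the key representation-theoretic input.

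Second, I would compute $\H^\ast(C_p;M)$ for $M$ the associated graded as the cohomology of the $2$-periodic complex $M \xrightarrow{g-1} M \xrightarrow{N} M \xrightarrow{g-1} M \to \cdots$. On each nontrivial eigenline the invariants are generated by a $p$-th power (producing the $\delta_i$, in internal degree $-2p$) and $\H^1$ is one-dimensional (producing $\alpha_i$), while the norm image inside $\H^0$ is exactly the transfer classes we discard; the remaining cohomology is recovered multiplicatively via the cup-product structure. Assembling over the $n$ eigen-blocks and recording the invariant periodicity $\beta$ yields the ring $\Lambda(\alpha_1,\ldots,\alpha_n)\otimes P(\beta,\delta_1,\ldots,\delta_n^{\pm 1})$ modulo transfer. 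Finally, collapse of the $\frakm$-adic algebraic spectral sequence — the associated graded already being free over this ring — upgrades the computation from the associated graded to the actual $E_2$-term.

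Third, and this is the hard part, I would establish the differentials, which are not forced by the $E_2$-page and require genuinely equivariant input. The natural tool, following Hill--Hopkins--Ravenel, is the $C_p$-slice spectral sequence together with the norm and Euler-class structure. The first family $d_{2p^i-1}(\delta_n^{p^{i-1}}) = a_i\,\delta_n^{p^{i-1}} h_{i,0}\,\beta^{p^i-1}$ would be read off from the slice differentials: the differential lengths $2p^i-1$ are dictated by the orders of the $p$-th power maps detecting $\delta_n^{p^{i-1}}$, and the identification of the targets (hence the twist classes $h_{i,0}$ and the units $a_i$) comes from the gold/orientation relation. A method-of-infinite-descent argument, as in the $n=2$ exposition of \cite{hhr}, then propagates these differentials to all their multiplicative consequences and in particular truncates the $\beta$-towers.

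The Toda-style differentials that cut off the $\beta$-towers on the $\delta_i$ on the later pages would then follow from the horizontal vanishing (finiteness) of $\pi_\ast E^{hC_p}$ already forced by the first family, exactly as in the $n=1$ case originally treated by Hopkins--Miller; and the surviving classes $\delta_i\delta_n^{-1}$ and $\delta_n^{p^n}$ would be shown to be permanent cycles by matching them with genuine $C_p$-equivariant (norm) classes that lift to actual homotopy. The principal obstacle throughout is the differentials: pinning down their exact lengths and targets, rather than merely their existence, is where the slice/equivariant machinery is indispensable and where the computation of \cite{hhr-eon} does the real work; the purely algebraic identification of the $E_2$-term modulo transfer, by contrast, is a standard cyclic-group cohomology calculation once the eigenspace decomposition in step (b) is in hand.
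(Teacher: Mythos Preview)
The paper does not prove this theorem at all: it is stated as an unpublished result of Hill--Hopkins--Ravenel and cited to \cite[Propositions 1 and 2]{hhr-eon} (with \cite{hhr} for the case $n=2$), so there is no proof in the paper to compare your proposal against. Your sketch is a plausible high-level outline of how such a computation might proceed --- the $E_2$-identification via the $C_p$-action on $\frakm/\frakm^2$ and the slice/norm input for the differentials are the expected ingredients --- but since the paper treats this as a black box, there is nothing here to grade your proposal against beyond noting that it is a sketch rather than a proof, with the genuinely hard step (pinning down the exact differentials and the units $a_i$) deferred to the same references the paper cites.
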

\begin{proof}[Proof of Proposition \ref{conj-proof}]
    Let $E = E_{n(p-1)}$, and let $G = C_p$. According to \cite[Proposition
    16]{strickland}, the $\pi_\ast E^{hG}$-module $\pi_\ast DE^{hG}$ is free of
    rank one as a $C_p$-$\pi_\ast E$-module on a generator that we shall denote
    by $\gamma$, and the HFPSS for $DE^{hG}$ is a module over that of $E^{hG}$.
    These spectral sequences collapse at a finite stage, so by \cite[Lemma
    4.7]{barthelbeaudrystojanoska}, it suffices to prove that $\delta_n^N
    \gamma$ is a permanent cycle for some integer $N$.

    Before proceeding with the proof, let us show how this proves the result
    for finite subgroups $G\subsetneq \Gamma$ with Sylow $p$-subgroup $C_p$. As
    the Leray-Hochschild-Serre spectral sequence degenerates, there is an
    isomorphism of $E_2$-pages
    $$\H^\ast(G, \pi_\ast DE) \simeq \H^\ast(C_p, \pi_\ast DE)^{G/C_p}.$$
    The norm of $\delta_n^N \gamma$ under the action of $G/C_p$ is a permanent
    cycle in the HFPSS for $DE^{hG}$, so we are done.

    To prove the result when $G = C_p$, we argue inductively. It follows from
    Theorem \ref{eon} that $\gamma$ is a $(2p-2)$-cycle, and that
    $$d_{2p-1}(\gamma) = b_1 h_{1,0} \beta^{p-1} \gamma,$$
    for some unit $b_1\in \FF_{p^n}^\times$. It follows that
    $$d_{2p-1}(\delta_n^{k_1} \gamma) = (k_1 a_1 + b_1) h_{1,0} \beta^{p-1}
    \delta_n^{k_1-1} \gamma$$
    is zero if $k_1$ is chosen to be congruent to $-b_1/a_1$ modulo $p$.
    Therefore, $\delta_n^{k_1} \gamma$ is a $(2p-1)$-cycle (and hence a
    $(2p^2-2)$-cycle, by sparsity). For the inductive step, suppose
    $\delta_n^{k_i} \gamma$ is a $(2p^{i+1}-2)$-cycle; we need to show that
    there is some $N$ such that $\delta_n^N \gamma$ is a $(2p^{i+1}-1)$-cycle.
    We have
    $$d_{2p^{i+1}-1}(\delta_n^{k_i} \gamma) = b_{i+1} h_{i+1,0}
    \beta^{p^{i+1}-1} \gamma$$
    for some $b_{i+1}\in \FF_{p^n}^\times$. Arguing as above, we have
    $$d_{2p^{i+1}-1}(\delta_n^{\ell_{i+1}p^i+k_i}\gamma) = (\ell_{i+1}a_{i+1} +
    b_{i+1}) \delta_n^{k_{i+1}p^i} h_{i+1,0} \beta^{p^{i+1}-1} \gamma,$$
    so choosing $\ell_{i+1}$ congruent to $-b_{i+1}/a_{i+1}$ modulo $p$, we find
    that $\delta_n^{\ell_{i+1} p^i + k_i} \gamma$ is a $(2p^{i+1}-1)$-cycle, as
    desired. Having completed the inductive step, we find that $DE^{hC_p}$ is a
    shift of $E^{hC_p}$ by $2pN = 2p\sum_{i=0}^{n} \ell_{i+1} p^i$. This
    finishes the proof of Proposition \ref{conj-proof}.
\end{proof}

\bibliographystyle{alpha}
\bibliography{main}
\end{document}